\def\today{\ifcase\month\or=
  January\or February\or March\or April\or May\or June\or=
  July\or August\or September\or October\or November\or December\fi=
  \space\number\day, \number\year}
 \newtheorem{theorem}{Theorem}
 \newtheorem{lemma}[theorem]{Lemma}
 \theoremstyle{definition}
 \theoremstyle{remark}
 \newcommand{\mc}{\mathcal}
 \newcommand{\M}{\mc{M}}
\newcommand{\T}{\mathbb{T}}
 \newcommand{\R}{\mathbb{R}}
 \newcommand{\Z}{\mathbb{Z}}
 \newcommand{\ds}{\text{\rm d}s}
 \newcommand{\du}{\text{\rm d}u}
 \newcommand{\dx}{\text{\rm d}x}
 \newcommand{\dy}{\text{\rm d}y}
    \renewcommand{\d}{\text{\rm d}}
\newcommand{\ov}{\overline}
\renewcommand{\H}{\mc{H}}
\renewcommand{\S}{\mathbb{S}}
\newcommand{\pd}[2]{\frac{\partial #1}{\partial #2}}
\begin{document}

\title[Variation of maximal operators]{On the variation of maximal operators \\ of convolution type II}
\author[Carneiro, Finder and Sousa]{Emanuel Carneiro, Renan Finder and Mateus Sousa}
\date{\today}
\subjclass[2010]{42B25, 46E35, 35B50, 31B05, 35J05, 35K08}
\keywords{Maximal functions, heat flow, Poisson kernel, Sobolev spaces, regularity, subharmonic, bounded variation, variation-diminishing, sphere.}

\address{IMPA - Instituto de Matem\'{a}tica Pura e Aplicada, Estrada Dona Castorina 110, Rio de Janeiro - RJ, Brazil, 22460-320.}
\email{carneiro@impa.br}
\email{feliz@impa.br}
\email{mateuscs@impa.br}

\allowdisplaybreaks
\numberwithin{equation}{section}

\maketitle

\begin{abstract} In this paper we establish that several maximal operators of convolution type, associated to elliptic and parabolic equations, are variation-diminishing. Our study considers maximal operators on the Euclidean space $\R^d$, on the torus $\T^d$ and on the sphere $\S^d$. The crucial regularity property that these maximal functions share is that they are subharmonic in the corresponding detachment sets.

\end{abstract}

\section{Introduction}

\subsection{Background} Let $\varphi: \R^d \times (0,\infty) \to \R$ be a nonnegative function such that \begin{equation}\label{Intro_eq_integral_1}
\int_{\R^d} \varphi(x, t)\,\dx = 1
\end{equation} 
for each $t >0$. Assume also that, when $t \to 0$, the family $\varphi(\cdot, t)$ is an approximation of the identity, in the sense that $\lim_{t \to 0} \varphi(\cdot, t) * f (x) = f(x)$ for a.e. $x \in \R^d$, if $f \in L^p(\R^d)$ for some $1 \leq p \leq \infty$. For an initial datum $u_0:\R^d \to \R$ we consider the evolution $u: \R^d \times (0,\infty) \to \R$ given by
$$u(x,t) = \varphi(\cdot, t) * |u_0| (x),$$
and the associated maximal function 
\begin{equation*}
u^*(x) = \sup_{t>0} \,u(x,t).
\end{equation*}
For a fixed time $t>0$, due to \eqref{Intro_eq_integral_1}, the convolution $\varphi(\cdot, t) *|u_0|$ is simply a weighted average of $|u_0|$, and hence it does not increase its variation (understood as the classical total variation or, more generally, as an $L^p$-norm of the gradient for some $1 \leq p \leq \infty$). One of the questions that interest us here is to know whether this smoothing behavior is preserved when we pass to the maximal function $u^*$. For instance, if $u_0 :\R \to \R$ is a function of bounded variation, do we have
\begin{equation}\label{Intro_variation_diminishing_problem}
V(u^*) \leq C\,\,V(u_0)
\end{equation}
with $C = 1$? Here $V(f)$ denotes the total variation of the function $f$.

\smallskip

The most natural example of an operator in this framework is the Hardy-Littlewood maximal operator, in which $\varphi(x,t) = \frac{1}{t^d m(B_1)}\chi_{B_1}(x/t)$, where $B_1$ is the unit ball centered at the origin and $m(B_1)$ is its $d$-dimensional Lebesgue measure. In this case, due to the work of Kurka \cite{Ku}, the one-dimensional estimate \eqref{Intro_variation_diminishing_problem} is known to hold with constant $C =  240,004$, but the problem with $C=1$ remains open. For the one-dimensional right (or left) Hardy-Littlewood maximal operator, i.e. when $\varphi(x,t) = \frac{1}{t}\chi_{[0,1]}(x/t)$, estimate \eqref{Intro_variation_diminishing_problem} holds with $C=1$ due to the work of Tanaka \cite{Ta}. The sharp bound \eqref{Intro_variation_diminishing_problem} with constant $C=1$ also holds for the one-dimensional uncentered version of this operator, as proved by Aldaz and P\'{e}rez L\'{a}zaro \cite{AP}. Higher dimensional analogues of \eqref{Intro_variation_diminishing_problem} for the Hardy-
Littlewood maximal operator, centered or uncentered, are open problems (see, for instance, the work of Haj\l asz and Onninen \cite{HO}). Other interesting works related to the regularity of the Hardy-Littlewood maximal operator and its variants, when applied to Sobolev and BV functions, are \cite{BCHP, CH, CMa, CM, HM, Ki, KL, KiSa, Lu1, Lu2, St, Te}.

\smallskip

In the precursor of this work \cite[Theorems 1 and 2]{CS}, Carneiro and Svaiter proved the variation-diminishing property, i.e. inequality \eqref{Intro_variation_diminishing_problem} with $C=1$, for the maximal operators associated to the Poisson kernel 
\begin{equation}\label{Intro_Poisson}
P(x,t) = \frac{\Gamma \left(\frac{d+1}{2}\right)}{\pi^{(d+1)/2}}\ \frac{t}{(|x|^2 + t^2)^{(d+1)/2}}
\end{equation}
and the Gauss kernel
\begin{equation}\label{Intro_Gauss}
K(x,t) = \frac{1}{(4 \pi t)^{d/2}}\ e^{-|x|^2/4t}.
\end{equation}
Their proof is based on an interplay between the analysis of the maximal functions and the structure of the underlying partial differential equations (Laplace's equation and heat equation). The aforementioned examples are the only maximal operators of convolution type for which inequality \eqref{Intro_variation_diminishing_problem} has been established (even allowing a constant $C>1$).

\subsection{Maximal operators associated to elliptic equations}

A question that derives from our precursor \cite{CS} is whether the variation-diminishing property is a peculiarity of the smooth kernels \eqref{Intro_Poisson} and \eqref{Intro_Gauss} or if these can be seen as particular cases of a general family. One could, for example, look at the semigroup structure via the Fourier transforms \footnote{Our normalization of the Fourier transform is $\widehat{f}(\xi) = \int_{\R^d} e^{-2 \pi i x \cdot \xi}\,f(x)\,\dx$.} (in space) of these kernels:
\begin{equation*}
\widehat{P}(\xi,t) = e^{-t (2\pi |\xi|)} \ \ \ \ {\rm and} \ \ \ \ \ \widehat{K}(\xi,t) =  e^{-t (2\pi |\xi|)^2}.
\end{equation*}
A reasonable way to connect these kernels would be to consider the one-parameter family 
\begin{equation*}
\widehat{\varphi}_{\alpha}(\xi,t) =  e^{-t (2\pi |\xi|)^{\alpha}},
\end{equation*}
for $1 \leq \alpha \leq 2$.
However, in this case, the function $u(x,t) = \varphi_{\alpha}(\cdot, t)*u_0(x)$ solves an evolution equation related to the fractional Laplacian
$$u_t + (-\Delta)^{\alpha/2}\,u = 0\,,$$
for which we do not have a local maximum principle, essential to run the argument of Carneiro and Svaiter in \cite{CS}. The problem of proving that the corresponding maximal operator is variation-diminishing seems more delicate and it is currently open.

\smallskip

A more suitable way to address this question is to consider the Gauss kernel as an appropriate limiting case. For $a>0$ and $b \geq 0$ we define (motivated by the partial differential equation \eqref{Ell_equation} below)
\begin{equation}\label{Intro_Def_phi_hat_a_b}
\widehat{\varphi}_{a,b}(\xi, t) := e^{-t \left(\frac{-b + \sqrt{b^2 + 16 a \pi^2 |\xi|^2}}{2a}\right)}.
\end{equation}
Note that when $a=1$ and $b=0$ we have the Fourier transform of the Poisson kernel, and when $b=1$ and $a \to 0^+$ the function \eqref{Intro_Def_phi_hat_a_b} tends pointwise to the Fourier transform of the Gauss kernel by a Taylor expansion. For completeness, let us then define 
\begin{equation}\label{Intro_Def_phi_hat_0_b}
\widehat{\varphi}_{0,b}(\xi, t) := e^{-\frac{t}{b} (2\pi |\xi|)^2},
\end{equation}
for $b>0$. We will show that the inverse Fourier transform 
\begin{equation}\label{def_kernel_phi_a_b}
{\varphi}_{a,b}(x, t) = \int_{\R^d} \widehat{\varphi}_{a,b}(\xi, t) \,e^{2 \pi i x \cdot \xi}\,\d\xi
\end{equation}
is a {\it nonnegative radial function} that has the desired properties of an approximation of the identity. Let us consider the corresponding maximal operator
\begin{equation}\label{def_max_function_a_b}
u^*(x) = \sup_{t>0} {\varphi}_{a,b}(\cdot, t)*|u_0|(x).
\end{equation}
The fact that $u^*(x) \leq Mu_0(x)$ pointwise, where $M$ denotes the Hardy-Littlewood maximal operator, follows as in \cite[Chapter III, Theorem 2]{S}. Hence, for $1 < p \leq \infty$, we have $\|u^*\|_{L^p(\R^d)} \leq C\, \|u_0\|_{L^p(\R^d)}$ for some $C>1$. We also notice, from the work of Kinnunen \cite[proof of Theorem 1.4]{Ki}, that the maximal operator of convolution type \eqref{def_max_function_a_b} is bounded on $W^{1,p}(\R^d)$ for $1< p \leq \infty$, with $\|\nabla u^*\|_{L^p(\R^d)} \leq C\, \|\nabla u_0\|_{L^p(\R^d)}$ for some $C>1$. 

\smallskip

Our first result establishes that the corresponding maximal operator \eqref{def_max_function_a_b} is indeed {\it variation-diminishing} in multiple contexts. This extends \cite[Theorems 1 and 2]{CS}.

\begin{theorem}\label{Thm1}
Let $a,b\geq0$ with $(a,b) \neq (0,0)$, and let $u^*$ be the maximal function defined in \eqref{def_max_function_a_b}. The following propositions hold.
\smallskip
\begin{itemize}
\item[(i)] Let $1 < p \leq \infty$ and $u_0 \in W^{1,p}(\R)$. Then $u^{*} \in W^{1,p}(\R)$ and 
\begin{equation*}
\|(u^*)'\|_{L^p(\R)} \leq \|u_0'\|_{L^p(\R)}.
\end{equation*}

\vspace{0.15cm}

\item[(ii)] Let $u_0 \in W^{1,1}(\R)$. Then $u^{*} \in L^{\infty}(\R)$ and has a weak derivative $(u^*)'$ that satisfies 
\begin{equation*}
\|(u^*)'\|_{L^1(\R)} \leq \|u_0'\|_{L^1(\R)}.
\end{equation*}

\vspace{0.15cm}

\item[(iii)] Let $u_0$ be of bounded variation on $\R$. Then $u^*$ is of bounded variation on $\R$ and 
\begin{equation*}
V(u^*) \leq V(u_0).
\end{equation*}

\vspace{0.15cm}

\item[(iv)] Let $d>1$ and $u_0 \in W^{1,p}(\R^d)$, for $p =2$ or $p= \infty$. Then $u^{*} \in W^{1,p}(\R^d)$ and 
\begin{equation*}
\|\nabla u^*\|_{L^p(\R^d)} \leq \|\nabla u_0\|_{L^p(\R^d)}.
\end{equation*}

\end{itemize}
\end{theorem}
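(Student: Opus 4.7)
The plan is to extend the strategy of Carneiro--Svaiter \cite{CS} by exploiting the interplay between the maximal function and its underlying PDE. A direct calculation from \eqref{Intro_Def_phi_hat_a_b} shows that the Fourier symbol $\lambda(\xi)=(-b+\sqrt{b^2+16a\pi^2|\xi|^2})/(2a)$ satisfies the algebraic identity $a\lambda^2+b\lambda=4\pi^2|\xi|^2$, so that if we write $u(x,t)=\varphi_{a,b}(\cdot,t)*|u_0|(x)$ then $u$ solves the equation $\Delta u + a\,u_{tt} - b\,u_t = 0$ on $\R^d\times(0,\infty)$, which specializes to Laplace's equation when $(a,b)=(1,0)$ and to the heat equation when $(a,b)=(0,1)$. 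Before attacking the variation estimates, I would verify that $\varphi_{a,b}$ is a nonnegative radial approximation of the identity via Bochner subordination: the function $s\mapsto(-b+\sqrt{b^2+16a\pi^2 s})/(2a)$ is a Bernstein function on $[0,\infty)$, so $\varphi_{a,b}(\cdot,t)$ is a nonnegative mixture of Gauss kernels and the standard $L^1$-approximation properties are inherited.

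The central structural step is a subharmonicity principle analogous to \cite[Lemma 2]{CS}: on the (open) detachment set $D=\{x:u^*(x)>|u_0(x)|\}$, the function $u^*$ is subharmonic. Fix $x_0\in D$ and let $t_0>0$ realize the supremum, whose existence follows from $u(x_0,t)\to|u_0(x_0)|$ as $t\to 0^+$ and $u(x_0,t)\to 0$ as $t\to\infty$. Then $u^*(x)\geq u(x,t_0)$ everywhere with equality at $x_0$, so in the viscosity/distributional sense one has $\Delta u^*(x_0)\geq\Delta u(x_0,t_0)$; the optimality conditions $u_t(x_0,t_0)=0$, $u_{tt}(x_0,t_0)\leq 0$ together with the PDE give $\Delta u(x_0,t_0)=-a\,u_{tt}(x_0,t_0)\geq 0$, as required.

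For parts (i)--(iii), in one dimension subharmonicity means convexity of $u^*$ on each component of $D$. Writing $D=\bigcup_i(\alpha_i,\beta_i)$, with $u^*(\alpha_i)=|u_0(\alpha_i)|$ and $u^*(\beta_i)=|u_0(\beta_i)|$ (with the obvious modifications at infinity), convexity forces $u^*$ to be monotone on each side of a single interior infimum $m_i$, so $V(u^*;[\alpha_i,\beta_i])=(u^*(\alpha_i)-m_i)+(u^*(\beta_i)-m_i)$. Summing these identities and using $u^*=|u_0|$ on $\R\setminus D$ telescopes to $V(u^*)\leq V(|u_0|)\leq V(u_0)$, which is (iii); a parallel argument on the monotone pieces, invoking the rearrangement inequality for $L^p$-norms of derivatives, yields (i) and the derivative bound in (ii), while the $L^\infty$ bound in (ii) follows from $u^*\leq Mu_0$ and the embedding $W^{1,1}(\R)\hookrightarrow L^\infty(\R)$.

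For (iv), the $p=\infty$ case is immediate since the pointwise supremum of $L$-Lipschitz functions is $L$-Lipschitz. For $p=2$, set $w=u^*-|u_0|\geq 0$, supported on $\ov{D}$. Subharmonicity gives $-\Delta w\leq\Delta|u_0|$ on $D$; multiplying by $w$ and integrating by parts (using $w=0$ on $\partial D$) produces
\begin{equation*}
\int_D|\nabla u^*|^2\,\dx \;\leq\; \int_D\nabla u^*\cdot\nabla|u_0|\,\dx,
\end{equation*}
whence Cauchy--Schwarz yields $\|\nabla u^*\|_{L^2(D)}\leq\|\nabla|u_0|\|_{L^2(D)}$; adding the contribution from $D^c$ (where $\nabla u^*=\nabla|u_0|$ a.e.) and using $|\nabla|u_0||\leq|\nabla u_0|$ closes the argument. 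The most delicate point throughout will be the rigorous justification of the subharmonicity of $u^*$ and of the Green's-identity manipulation, since $D$ may be an arbitrary open set and $u^*$ is a priori only in $W^{1,p}_{\mathrm{loc}}$; this should be handled by approximation via the superlevel sets $\{w>\varepsilon\}$ combined with a mollification of $u_0$ and a passage to the limit.
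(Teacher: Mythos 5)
Your overall strategy coincides with the paper's (subharmonicity of $u^*$ in the detachment set, convexity in dimension one, an integration-by-parts argument for $p=2$), but the central subharmonicity step is not actually established. First, you presuppose that the supremum defining $u^*(x_0)$ is attained at a finite time $t_0>0$, justified by ``$u(x_0,t)\to0$ as $t\to\infty$''. That decay holds only for $u_0\in L^p$ with $p<\infty$; after the reduction relevant to part (iii) (and to bounded Lipschitz data generally) $u_0$ need not decay -- take $u_0$ smooth, increasing from $0$ to $1$, for which $u(x,t)\to\tfrac12$ as $t\to\infty$ and the supremum is approached only in the limit $t\to\infty$ on a large portion of the detachment set -- so at such points your touching argument says nothing. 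The paper's Lemma \ref{lem_subharmonicity} avoids attainment altogether: it compares $v=u-h$, with $h$ the harmonic replacement of $u^*$ on a ball, on cylinders $\overline{B_r(x_0)}\times[0,t_1]$ via the maximum principle for $a\partial_{tt}-b\partial_t+\Delta$, using the large-time equicontinuity of Lemma \ref{lem_boundedness_large_times} to control the top of the cylinder. Second, even when $t_0$ exists, the inference ``$u(\cdot,t_0)$ touches $u^*$ from below at $x_0$ and $\Delta u(x_0,t_0)\ge0$, hence $\Delta u^*(x_0)\ge\Delta u(x_0,t_0)$ in the viscosity/distributional sense'' is the wrong-direction test: functions touching from below are the test objects for \emph{super}solutions, and no distributional inequality at a point follows from a lower minorant. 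What your computation really gives is that the spherical average of $u^*$ over $\partial B_r(x_0)$ is at least $u^*(x_0)-o(r^2)$ as $r\to0$; converting this, for a merely continuous $u^*$, into genuine subharmonicity requires an additional ingredient (a Blaschke--Privalov-type criterion, or the comparison argument the paper uses). Since the individual functions $u(\cdot,t)$ are \emph{not} subharmonic (here $\Delta u=bu_t-au_{tt}$ has no sign), one cannot fall back on ``supremum of subharmonic functions''; this is precisely the crux, and flagging it as delicate without a mechanism leaves the proof incomplete.

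Two further points are asserted rather than proved. For parts (i)--(ii) you invoke ``the rearrangement inequality for $L^p$-norms of derivatives'' on the components of the detachment set; the precise statement needed -- if $g$ is convex, lies above $f$ on $[\alpha,\beta]$ and agrees with $f$ at the endpoints, then $\|g'\|_{L^p}\le\|f'\|_{L^p}$ -- is the paper's Lemma \ref{Lem7_Renan}, proved by approximating $g$ by maxima of tangent lines plus Jensen, together with its half-line variant; one also needs the gluing step showing that $u^*$ is globally weakly differentiable in part (ii). Likewise, the continuity of $u^*$ (so that $D$ is open) and the reduction to Lipschitz data -- indispensable in part (iv) with $p=2$, $d\ge2$, where $W^{1,2}$ functions are not continuous, and in part (iii) for general BV data -- is only hinted at by ``mollification of $u_0$ and a passage to the limit''; the paper's Lemma \ref{Red_Lip_case} carries this out using the semigroup identity $u_\varepsilon^*=\sup_{t>\varepsilon}\varphi_{a,b}(\cdot,t)*u_0$, weak compactness and lower semicontinuity, which is what makes mollification by the kernel itself harmless. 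Your $p=2$ computation and the proposed truncation/mollification fix for the Green identity are sound and essentially reproduce the paper's Lemma \ref{lem5_Int_by_parts}.
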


We shall see that the kernel \eqref{def_kernel_phi_a_b} has an elliptic character (when $a>0$) in the sense that $u(x,t) = {\varphi}_{a,b}(\cdot, t) * |u_0|(x)$ solves the equation 
\begin{equation}\label{Ell_equation}
au_{tt} - bu_t + \Delta u = 0 \ \ \ \ {\rm in}  \ \ \  \R^d \times (0,\infty)
\end{equation}
with
\begin{equation*}
\lim_{t \to 0^+} u(x,t) = |u_0(x)| \ \ \ \ {\rm a.e.}  \ \ {\rm in} \ \ \R^d.
\end{equation*}
In particular, the corresponding maximum principle plays a relevant role in our analysis. By appropriate dilations in the space variable $x$ and the time variable $t$, Theorem \ref{Thm1} essentially splits into three regimes: (i) the case $a=1$ and $b=0$ (which models all cases $a>0$ and $b=0$, corresponding to Laplace's equation) in which the level surfaces $|\xi| = \tau$ in \eqref{Intro_Def_phi_hat_a_b} are {\it cones}; (ii) the case $a=0$ and $b=1$ (which models all cases $a=0$ and $b>0$, corresponding to the heat equation), in which the level surfaces $|\xi|^2 = \tau$ in \eqref{Intro_Def_phi_hat_0_b} are {\it paraboloids}; (iii) the case $a=1$ and $b=1$ (which models all the remaining cases $a>0$ and $b>0$), in which the level surfaces $-1 + \sqrt{1 + 16 \pi^2 |\xi|^2} = \tau$ in \eqref{Intro_Def_phi_hat_a_b} are {\it hyperboloids}. The first two cases were proved in \cite[Theorems 1 and 2]{CS} (although here we provide a somewhat different and simpler proof than that of \cite{CS}) and the third regime is the novel contribution of this section. 

\subsection{Periodic analogues} We now address similar problems in the torus $\T^d \simeq \R^d/\Z^d$. For $a>0$, $b\geq 0$, $t>0$ and $n \in \Z^d$ let us now define
\begin{equation*}
\widehat{\Psi}_{a,b}(n, t) := e^{-t \left(\frac{-b + \sqrt{b^2 + 16 a \pi^2 |n|^2}}{2a}\right)},
\end{equation*}
and when $a=0$ and $b>0$ we define 
\begin{equation*}
\widehat{\Psi}_{0,b}(n, t) := e^{-\frac{t}{b} (2\pi |n|)^2}.
\end{equation*}
We then consider the periodic kernel, for $x \in \R^d$, 
\begin{equation*}
{\Psi}_{a,b}(x, t) = \sum_{n \in \Z^d} \widehat{\Psi}_{a,b}(n, t) \,e^{2 \pi i x \cdot n}.
\end{equation*}
It is clear that ${\Psi}_{a,b} \in C^{\infty}(\R^d \times (0,\infty))$. By Poisson summation formula, ${\Psi}_{a,b}$ is simply the periodization of   ${\varphi}_{a,b}$ defined in \eqref{def_kernel_phi_a_b}, i.e.
\begin{equation*}
{\Psi}_{a,b}(x, t) = \sum_{n \in \Z^d} {\varphi}_{a,b}(x +n, t).
\end{equation*}
Since ${\varphi}_{a,b}$ is nonnegative, and $\widehat{\Psi}_{a,b}(n, t)$ is also nonnegative, it follows that 
\begin{equation*}
0 \leq {\Psi}_{a,b}(x, t) \leq {\Psi}_{a,b}(0, t) 
\end{equation*}
for all $x \in \R^d$ and $t >0$. The approximate identity properties of the family $ {\varphi}_{a,b}(\cdot, t)$, reviewed in Section   \ref{Section2.1}, transfer to ${\Psi}_{a,b}(\cdot, t)$ in the periodic setting. For an initial datum $u_0:\T^d \to \R$ (which we identify with its periodic extension to $\R^d$) we keep denoting the evolution $u(x,t): \T^d \times (0,\infty) \to \R^+$ by 
\begin{equation}\label{Connecting_two_max_fun}
u(x,t) = {\Psi}_{a,b}(\cdot, t)*|u_0|(x) = \int_{\T^d} {\Psi}_{a,b}(x-y, t)\, |u_0(y)|\,\dy = \int_{\R^d} {\varphi}_{a,b}(x-y, t)\, |u_0(y)|\,\dy.
\end{equation}
Also, we keep denoting the maximal function $u^*:\T^d \to \R^+$ by
\begin{equation}\label{max_func_per}
u^*(x) = \sup_{t>0} \, u(x,t).
\end{equation}
From \eqref{Connecting_two_max_fun} it follows that $u^*(x) \leq Mu_0(x)$, where $M$ denotes the Hardy-Littlewood maximal operator on $\R^d$, and hence the operator $u_0 \mapsto u^*$ is bounded on $L^p(\T^d)$ for $1 < p \leq \infty$ and maps $L^1(\T^d)$ into $L^1_{weak}(\T^d)$ (the case $p=\infty$ is trivial; the case $p=1$ follows by the usual Vitali covering argument; the general case $1 < p < \infty$ follows by Marcinkiewicz interpolation). Then, it follows as in \cite[proof of Theorem 1.4]{Ki} that $u_0 \mapsto u^*$ is bounded on $W^{1,p}(\T^d)$ for $1 < p \leq \infty$, with $\|\nabla u^*\|_{L^p(\T^d)} \leq C\, \|\nabla u_0\|_{L^p(\T^d)}$ for some $C>1$.

\smallskip

Our second result establishes the variation-diminishing property for the operator \eqref{max_func_per} in several cases.

\begin{theorem}\label{Thm2}
Let $a,b\geq0$ with $(a,b) \neq (0,0)$, and let $u^*$ be the maximal function defined in \eqref{max_func_per}. The following propositions hold.
\smallskip
\begin{itemize}
\item[(i)] Let $1 < p \leq \infty$ and $u_0 \in W^{1,p}(\T)$. Then $u^{*} \in W^{1,p}(\T)$ and 
\begin{equation*}
\|(u^*)'\|_{L^p(\T)} \leq \|u_0'\|_{L^p(\T)}.
\end{equation*}

\vspace{0.15cm}

\item[(ii)] Let $u_0 \in W^{1,1}(\T)$. Then $u^{*} \in L^{\infty}(\T)$ and has a weak derivative $(u^*)'$ that satisfies 
\begin{equation*}
\|(u^*)'\|_{L^1(\T)} \leq \|u_0'\|_{L^1(\T)}.
\end{equation*}

\vspace{0.15cm}

\item[(iii)] Let $u_0$ be of bounded variation on $\T$. Then $u^*$ is of bounded variation on $\T$ and 
\begin{equation*}
V(u^*) \leq V(u_0).
\end{equation*}

\vspace{0.15cm}

\item[(iv)] Let $d>1$ and $u_0 \in W^{1,p}(\T^d)$, for $p =2$ or $p= \infty$. Then $u^{*} \in W^{1,p}(\T^d)$ and 
\begin{equation*}
\|\nabla u^*\|_{L^p(\T^d)} \leq \|\nabla u_0\|_{L^p(\T^d)}.
\end{equation*}

\end{itemize}
\end{theorem}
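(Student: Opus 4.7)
The plan is to transport the machinery used for Theorem \ref{Thm1} to the periodic setting, exploiting that $\Psi_{a,b}$ inherits from $\varphi_{a,b}$ all the structural features needed: it is smooth, nonnegative, has unit mass on $\T^d$, and $u(x,t) = \Psi_{a,b}(\cdot,t)*|u_0|(x)$ solves the elliptic equation \eqref{Ell_equation} on $\T^d \times (0,\infty)$ with $\lim_{t\to 0^+} u(\cdot,t) = |u_0|$ almost everywhere. This follows directly from the termwise differentiation of the Fourier series, which converges absolutely for $t>0$ thanks to the exponential decay of $\widehat{\Psi}_{a,b}(n,t)$. Granting these, the detachment set approach from Theorem \ref{Thm1} applies: define $E = \{x \in \T^d : u^*(x) > |u_0(x)|\}$, observe that $E$ is open, and show that for each $x \in E$ there exists an optimizing time $t^*(x) \in (0,\infty)$ realizing the supremum, so that $u^*$ is smooth on $E$ and satisfies a subharmonic-type identity inherited from the PDE \eqref{Ell_equation} (when $a>0$, $u^*$ turns out to be subharmonic in $E$ in the classical sense; when $a=0$, $b>0$, a parabolic variant gives the analogue).

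For parts (i), (ii) and (iii), I would work on $\T \simeq \R/\Z$ by lifting to a fundamental interval. Subharmonicity in one dimension means convexity, so $u^*$ is convex on each connected component $I$ of $E \subset \T$, and equals $|u_0|$ at the endpoints $\partial I$. Convexity of $u^*$ on $I$, together with boundary matching, yields the pointwise inequality $|(u^*)'(x)| \leq \max\{|(u_0)'|(\alpha),|(u_0)'|(\beta)\}$ in a suitable weak sense at the endpoints $\alpha,\beta$ of $I$, and more importantly allows one to estimate $\int_I|(u^*)'|^p$ by the quantity $\int_I|u_0'|^p$ via the same convex-comparison lemma as in \cite{CS}. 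On the contact set $\T\setminus E$, $u^* = |u_0|$ and Lipschitz / Sobolev derivatives coincide a.e. Summing the two contributions establishes (i) with constant $1$. Parts (ii) and (iii) then follow from (i) by the standard approximation argument: mollify $u_0$, apply (i) with $p$ slightly above $1$, and pass to the limit using lower semicontinuity of the total variation.

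For part (iv) with $p=\infty$, no detachment-set analysis is needed: since $\Psi_{a,b}(\cdot,t)\geq 0$ and $\int_{\T^d}\Psi_{a,b}(\cdot,t) = 1$, translation invariance gives
\[
|u(x+h,t)-u(x,t)| \leq \int_{\T^d}\Psi_{a,b}(y,t)\,\bigl||u_0(x+h-y)|-|u_0(x-y)|\bigr|\,\dy \leq \|\nabla u_0\|_{L^\infty}|h|,
\]
uniformly in $t$. Taking the supremum in $t$ preserves the Lipschitz constant and the bound $\|\nabla u^*\|_{L^\infty(\T^d)} \leq \|\nabla u_0\|_{L^\infty(\T^d)}$ follows. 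For the case $p=2$ in $d>1$, I would again work on the detachment set $E \subset \T^d$, where $u^*$ is $C^1$ with $\nabla u^*(x) = \nabla_x u(x,t^*(x))$ since the time derivative vanishes at the optimum. The crucial subharmonicity of $|\nabla u|^2$ for solutions to \eqref{Ell_equation}, established in the Euclidean case, is purely local and thus transfers verbatim to $\T^d$; integrating this over $E$ (or rather over the lifted region $\{(x,t):0<t<t^*(x)\}$) and using the boundary identification $u^* = |u_0|$ on $\partial E$ together with Cauchy--Schwarz yields $\int_E |\nabla u^*|^2 \leq \int_{\partial E}|\nabla |u_0||^2 \cdot (\text{weights})$, and after integration by parts we recover $\|\nabla u^*\|_{L^2(\T^d)}^2 \leq \|\nabla u_0\|_{L^2(\T^d)}^2$.

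The main obstacle is the hyperbolic regime ($a>0$ and $b>0$), where the level surfaces are hyperboloids rather than cones or paraboloids. Here the subharmonicity of $u$ and of $|\nabla u|^2$ is less transparent than for the Laplace or heat equations, because the PDE \eqref{Ell_equation} contains a first-order time term that must be handled when verifying the subharmonic inequality $-\Delta_{(x,t)}(|\nabla u|^2) \leq 0$. The bulk of the work is to show that, despite the mixed character of \eqref{Ell_equation}, the relevant quantities are still subsolutions of an elliptic operator in the $(x,t)$ half-space, so that the boundary estimates survive. Once this ingredient from the proof of Theorem \ref{Thm1}(iv) is in hand, its application in the compact periodic setting is actually slightly cleaner, since no decay-at-infinity considerations are needed.
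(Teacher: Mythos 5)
Your outline reproduces the general philosophy (detachment set, subharmonicity, reduction to nice data), but the two steps you lean on most heavily are not actually proved, and the mechanisms you propose for them are not the ones that work. First, your route to subharmonicity of $u^*$ on $E=\{u^*>u_0\}$ goes through ``for each $x\in E$ there is an optimizing time $t^*(x)$, so $u^*$ is smooth on $E$ and inherits a subharmonic identity from the PDE.'' Neither claim is justified: on $\T^d$ the supremum need not be attained (as $t\to\infty$, $u(x,t)$ tends to the mean of $|u_0|$, and the supremum can be approached only in that limit), and even when it is attained, smoothness of $u^*$ and the passage from the pointwise relation $a u_{tt}-bu_t+\Delta u=0$ at $t^*(x)$ to $\Delta u^*\ge 0$ require nondegeneracy and regularity you have not supplied. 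The paper avoids all of this: it proves subharmonicity by comparing $u(x,t)-h(x)$, where $h$ is the harmonic replacement of $u^*$ on a ball, with the maximum principle for $a v_{tt}-bv_t+\Delta v=0$ (parabolic maximum principle when $a=0$) on cylinders $\overline{B_r}\times[0,t_1]$, using the ``behaviour at large times'' lemma to control the top of the cylinder; no optimizing time and no differentiability of $u^*$ are needed. Second, for part (iv) with $p=2$ you invoke ``subharmonicity of $|\nabla u|^2$ for solutions of \eqref{Ell_equation}, established in the Euclidean case'' — but this is not established in the paper (nor by you; you yourself defer it as ``the bulk of the work'' in the regime $a,b>0$), and your sketch of boundary integrals over $\partial E$ with unspecified weights is not a proof. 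The paper's argument is quite different and much simpler: subharmonicity of $u^*$ itself plus the integration-by-parts lemma (Lemma \ref{lem5_Int_by_parts_per}) applied to $f=u^*$, $g=u^*-u_0$ gives $\int\nabla u^*\cdot\nabla(u^*-u_0)\le 0$, and expanding $\|\nabla u_0\|_2^2=\|\nabla u^*-\nabla(u^*-u_0)\|_2^2$ yields the bound.

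There is also a gap in part (ii). Deducing it from (i) by mollifying and letting $p\downarrow 1$ with lower semicontinuity only yields that $u^*$ has bounded variation with $V(u^*)\le\|u_0'\|_{L^1}$; it does not show that $u^*$ possesses a weak derivative in $L^1$ (continuity plus bounded variation does not imply absolute continuity). The paper proves (ii) directly: for absolutely continuous $u_0$, $u^*$ is continuous and convex on each component $I_j=(\alpha_j,\beta_j)$ of the detachment set, the convex-comparison lemma (Lemma \ref{Lem7_Renan}) gives $\int_{I_j}|v|\le\int_{I_j}|u_0'|$ for the a.e.\ derivative $v$ of $u^*$ there, and one then verifies by an explicit integration by parts that $(u^*)'=\chi_A\, v+\chi_{A^c}\,u_0'$ is a genuine weak derivative. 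In short: the skeleton is right, but the three load-bearing ingredients (subharmonicity of $u^*$, the $p=2$ gradient estimate, and the $W^{1,1}$ endpoint) are either asserted without proof or routed through claims that fail or that the paper deliberately circumvents.
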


As in the case of $\R^d$, a relevant feature for proving Theorem \ref{Thm2} is the fact that $u(x,t) = {\Psi}_{a,b}(\cdot, t) * |u_0|(x)$ solves the partial differential equation 
\begin{equation*}
au_{tt} - bu_t + \Delta u = 0 \ \ \ \ {\rm in}  \ \ \  \T^d \times (0,\infty)
\end{equation*}
with
\begin{equation*}
\lim_{t \to 0^+} u(x,t) = |u_0(x)| \ \ \ \ {\rm a.e.}  \ \ {\rm in} \ \ \T^d.
\end{equation*}

\subsection{Maximal operators on the sphere} The set of techniques presented here allows us to address similar problems on other manifolds. We exemplify this by considering here the Poisson maximal operator and the heat flow maximal operator on the sphere $\S^{d}$. 

\subsubsection{Poisson maximal operator} Let $u_0\in L^p(\S^{d})$ with $1\leq p \leq \infty$. For $\omega \in \S^d$ and $0\leq \rho < 1$, let $u(\omega,\rho)=u(\rho\omega)$ be the function defined on the unit $(d+1)$-dimensional open ball $B_1 \subset \R^{d+1}$ as 
\begin{equation}\label{u_case_sphere}
u(\omega,\rho)= \int_{\S^{d}}\mc{P}(\omega,\eta, \rho)\,|u_0(\eta)|\,\d\sigma(\eta)\,,
\end{equation}
where $\mc{P}(\omega,\eta,\rho)$ is the Poisson kernel defined for $\omega, \eta \in \S^{d}$ by
\begin{equation*}\label{Poisson_kernel_sphere}
\mc{P}(\omega,\eta,\rho)=\frac{1-\rho^2}{ \sigma_{d}\,|\rho \omega-\eta|^d} = \frac{1-\rho^2}{ \sigma_{d} \,(\rho^2 - 2\rho\, \omega\cdot \eta + 1)^{d/2}}\,,
\end{equation*} 
with $\sigma_{d}$ being the surface area of $\S^{d}$.  In this case, we know that $u \in C^\infty(B_1)$ and it solves the Dirichlet problem
\begin{equation}\label{Diric}
\left\{\begin{array}{ll}
 \Delta u = 0 & {\rm in} \ B_1\,; \\
 \displaystyle\lim_{\rho \rightarrow 1}{u(\omega,\rho)}=|u_0(\omega)| & \mathrm{for~a.e.}~\omega\in \S^{d}.
\end{array}\right.
\end{equation}
From \cite[Chapter II, Theorem 2.3.6]{Yuan} we know that for each $0\leq\rho<1$ we have $u(\omega,\rho) \leq \M u_0(\omega)$, where $\M$ denotes de Hardy-Littlewood maximal operator on the sphere $\S^d$ (taken with respect to geodesic balls). Hence, we can define
\begin{equation}\label{maxpois}
u^*(\omega)=\sup_{0\leq\rho<1} u(\omega,\rho)
\end{equation}
and we know that $u_0 \mapsto u^*$ is bounded on $L^p(\S^d)$ for $1 < p \leq \infty$ (see \cite[Chapter II, Corollary 2.3.4]{Yuan}). Moreover, with an argument similar to \cite[proof of Theorem 1.4]{Ki}, using \eqref{Eq_crucial_Kin_sphere} and \eqref{Kin_sphere} below to explore the convolution structure of the sphere at the gradient level, one can show that $u_0\mapsto u^*$ is a bounded operator on $W^{1,p}(\S^d)$ for $1< p \leq \infty$, with $\|\nabla u^*\|_{L^p(\S^d)} \leq C\, \|\nabla u_0\|_{L^p(\S^d)}$ for some $C>1$.

\subsubsection{Heat flow maximal operator} Let $u_0\in L^p(\S^{d})$ with $1\leq p \leq \infty$. For $\omega \in \S^d$ and $t \in (0,\infty)$ let $u(\omega,t)$ be the function given by
\begin{equation}\label{sphere_heat_flow}
u(\omega,t)=\int_{\S^{d}}\mc{K}(\omega,\eta,t)\,|u_0(\eta)|\,\d\sigma(\eta)\,,
\end{equation}
where $\mc{K}(\omega,\eta,t)$ is the heat kernel on $\S^d$. Letting $\big\{Y_n^{\ell}\big\}$, $\ell = 1,2,\ldots, {\rm dim}\,{\mc H}_n^{d+1}$, be an orthonormal basis of the space ${\mc H}_n^{d+1}$ of spherical harmonics of degree $n$ in the sphere $\S^d$ (these are eigenvectors of the spherical Laplacian), we can write an explicit expression for this kernel as follows (see \cite[Lemma 1.2.3, Theorem 1.2.6 and Eq. 7.5.5]{Yuan}) 
\begin{equation*}
\mc{K}(\omega,\eta,t) = \sum_{n=0}^{\infty} e^{-t n (n+d -1)} \sum_{\ell=1}^{{\rm dim}\,{\mc H}_n^{d+1}} Y_n^{\ell}(\omega)Y_n^{\ell}(\eta)= \sum_{n=0}^{\infty} e^{-t n (n+d -1)}\frac{(n +\lambda)}{\lambda} \,C_n^{\lambda}(\omega \cdot \eta),
\end{equation*}
where $\lambda = \frac{d-1}{2}$ and $t \mapsto C^{\beta}_n(t)$, for $\beta >0$, are the {\it Gegenbauer polynomials} ({\it or ultraspherical polynomials}) defined in terms of the generating function 
\begin{equation*}
(1 - 2rt + r^2)^{-\beta} = \sum_{n=0}^{\infty} C^{\beta}_n(t)\, r^n.
\end{equation*}
As discussed in \cite[Chapter III, Section 2]{SY}, the kernel $\mc{K}$ verifies the following properties: 
\smallskip
\begin{enumerate}
\item[(P1)] $\mc{K}: \S^{d}\times \S^{d}\times (0,\infty) \to \R$ is a nonnegative smooth function that verifies $\displaystyle \partial_t\mc{K} - \Delta_{\omega} \mc{K} = 0$, where $\Delta_{\omega}$ denotes the Laplace-Beltrami operator with respect to the variable $\omega$.

\smallskip

\item[(P2)] $\mc{K}(\omega,\eta,t)=\mc{K}(\nu,t)$, where $\nu=d(\omega,\eta) = \arccos(\eta \cdot \omega)$ is the geodesic distance between $\omega$ and $\eta$. Moreover, we also have $\pd{\mc{K}}{\nu} < 0$, which means that $\mc{K}$ is radially decreasing in the spherical sense.

\smallskip

\item[(P3)] (Approximate identity) For each $t>0$ and $\omega \in \S^d$ we have
$$\displaystyle \int_{\S^{d}}\mc{K}(\omega,\eta,t)\,\d\sigma(\eta)=1,$$
and the function $u(\omega,t)$ defined in \eqref{sphere_heat_flow} converges pointwise a.e. to $|u_0|$ as $t \to 0$ (if $u_0\in C(\S^{d})$ the convergence is uniform).
\end{enumerate}
\smallskip
It then follows from (P1) and (P3) that $u(\omega,t)$ defined in \eqref{sphere_heat_flow} solves the heat equation
\begin{equation*}\label{Diric_heat}
\left\{\begin{array}{ll}
\partial_tu - \Delta u = 0 & {\rm in} \ \S^d \times (0,\infty)\,; \\
 \displaystyle\lim_{t \rightarrow 0^+}{u(\omega,t)}=|u_0(\omega)| & \mathrm{for~a.e.}~\omega\in \S^{d}.
\end{array}\right.
\end{equation*}
From (P2) and (P3) it follows from \cite[Chapter II, Theorem 2.3.6]{Yuan} that $u(\omega,t) \leq \mathcal{M}u_0(\omega)$, for each $t>0$. This allows us to define  
\begin{equation}\label{maxheats}
u^*(\omega)=\sup_{t>0}u(\omega,t)\,,
\end{equation}
and we see that $u_0 \mapsto u^*$ is bounded on $L^p(\S^d)$ for $1 < p \leq \infty$. As in the case of the Poisson maximal operator on $\S^d$ (or any maximal operator on the sphere associated to a smooth convolution kernel depending only on the inner product $\omega\cdot \eta$), using \eqref{Eq_crucial_Kin_sphere} below and \cite[proof of Theorem 1.4]{Ki}, one can show that $u_0\mapsto u^*$ is bounded on $W^{1,p}(\S^d)$ for $1< p \leq \infty$, with $\|\nabla u^*\|_{L^p(\S^d)} \leq C\, \|\nabla u_0\|_{L^p(\S^d)}$ for some $C>1$.

\subsubsection{Variation-diminishing property} Our next result establishes the variation-diminishing property for these maximal operators on the sphere $\S^d$.

\begin{theorem}\label{Thm3}
Let $u^*$ be the maximal function defined in \eqref{maxpois} or \eqref{maxheats}. The following propositions hold.
\smallskip
\begin{itemize}
\item[(i)] Let $1 < p \leq \infty$ and $u_0 \in W^{1,p}(\S^1)$. Then $u^{*} \in W^{1,p}(\S^1)$ and 
\begin{equation*}
\|(u^*)'\|_{L^p(\S^1)} \leq \|u_0'\|_{L^p(\S^1)}.
\end{equation*}

\vspace{0.15cm}

\item[(ii)] Let $u_0 \in W^{1,1}(\S^1)$. Then $u^{*} \in L^{\infty}(\S^1)$ and has a weak derivative $(u^*)'$ that satisfies 
\begin{equation*}
\|(u^*)'\|_{L^1(\S^1)} \leq \|u_0'\|_{L^1(\S^1)}.
\end{equation*}

\vspace{0.15cm}

\item[(iii)] Let $u_0$ be of bounded variation on $\S^1$. Then $u^*$ is of bounded variation on $\S^1$ and 
\begin{equation*}
V(u^*) \leq V(u_0).
\end{equation*}

\vspace{0.15cm}

\item[(iv)] Let $d>1$ and $u_0 \in W^{1,p}(\S^d)$, for $p =2$ or $p= \infty$. Then $u^{*} \in W^{1,p}(\S^d)$ and 
\begin{equation*}
\|\nabla u^*\|_{L^2(\S^d)} \leq \|\nabla u_0\|_{L^2(\S^d)}.
\end{equation*}

\end{itemize}
\end{theorem}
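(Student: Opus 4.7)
The plan is to extend the detachment-set/subharmonicity framework used to prove Theorems \ref{Thm1} and \ref{Thm2} to the spherical setting. Introduce the detachment set
\[
D = \{\omega \in \S^d : u^*(\omega) > |u_0(\omega)|\}.
\]
First I would verify that $u^*$ is continuous and that, for each $\omega_0 \in D$, the sup in \eqref{maxpois} (resp.\ \eqref{maxheats}) is attained at some interior $\rho_0\in(0,1)$ (resp.\ $t_0>0$). This uses compactness of $\S^d$ together with the boundary behaviour $u(\omega,\rho)\to |u_0(\omega)|$ as $\rho\to1^{-}$ (resp.\ $u(\omega,t)\to|u_0(\omega)|$ as $t\to0^+$), and the fact that an $\omega\in D$ is separated from the limiting profile.

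The key step is to show that $u^*$ is subharmonic on $D$ with respect to the Laplace-Beltrami operator $\Delta_{\S^d}$. For the heat case, if $\omega_0 \in D$ and the sup is attained at $t_0>0$, set $v(\omega)=u(\omega,t_0)$; this smooth function touches $u^*$ from below at $\omega_0$, and the heat equation $\partial_t u=\Delta_{\S^d} u$ combined with $\partial_t u(\omega_0,t_0)=0$ gives $\Delta_{\S^d}v(\omega_0)=0$. For the Poisson case, if the sup is attained at $\rho_0\in(0,1)$, set $v(\omega)=u(\rho_0\omega)$; the polar decomposition
\[
\Delta_{\R^{d+1}} u=\partial_{rr}u+\frac{d}{r}\,\partial_r u+\frac{1}{r^2}\Delta_{\S^d}u=0
\]
evaluated at $\rho_0\omega_0$, together with $\partial_r u(\rho_0\omega_0)=0$ and $\partial_{rr}u(\rho_0\omega_0)\leq0$ (as $\rho_0$ is an interior maximum of $\rho\mapsto u(\rho\omega_0)$), gives $\Delta_{\S^d}v(\omega_0)\geq0$. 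In both cases $v$ is a smooth subharmonic barrier touching $u^*$ from below at $\omega_0$, and a standard viscosity-to-distribution argument promotes this to $\Delta_{\S^d}u^*\geq0$ on $D$ in the distributional sense.

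With subharmonicity on $D$ in hand, parts (i)--(iii) follow the one-dimensional template used for $\R$ and $\T$: subharmonic on an arc of $\S^1$ is equivalent to convex with respect to arc length, so $u^*$ cannot have a strict interior local maximum on $D$, and continuity forces every local maximum $\omega_0$ of $u^*$ to satisfy $u^*(\omega_0)=|u_0(\omega_0)|$. Telescoping over the collection of local extrema then yields $V(u^*)\leq V(|u_0|)\leq V(u_0)$, and the Sobolev variants follow by the usual absolutely-continuous/mollification reductions. For part (iv) with $p=\infty$, note that each kernel depends only on $\omega\cdot\eta$ and integrates to one, so $u(\cdot,\rho)$ and $u(\cdot,t)$ inherit the Lipschitz constant of $|u_0|$ with respect to the geodesic distance on $\S^d$; taking the sup preserves this, yielding $\|\nabla u^*\|_{L^\infty(\S^d)}\leq\|\nabla u_0\|_{L^\infty(\S^d)}$.

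The hardest step will be the $p=2$ case of part (iv). The natural approach is to use that $u^*=|u_0|$ on $D^c$ together with the Green identity
\[
\int_{\S^d}|\nabla u^*|^2\dsigma-\int_{\S^d}|\nabla |u_0||^2\dsigma = -\int_{\S^d}(u^*-|u_0|)\,\Delta_{\S^d}(u^*+|u_0|)\dsigma,
\]
and the observation that $u^*-|u_0|\geq0$ is supported on $D$, where $\Delta_{\S^d}u^*\geq0$. Making this rigorous requires an approximation of $|u_0|$ by smooth functions (to control $\Delta_{\S^d}|u_0|$) and a careful localization, adapting the $L^2$ calculation from \cite{CS} to the compact manifold $\S^d$. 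Justifying the integration by parts will likely require truncating $D$ to compact regular subdomains or working with mollified maximal functions $u^{*,\varepsilon}$ and passing to the limit, which I expect to be the main technical obstacle.
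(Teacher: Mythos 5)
Your route to subharmonicity in the detachment set (touching $u^*$ from below by $\omega\mapsto u(\omega,t_0)$, resp.\ $\omega\mapsto u(\rho_0\omega)$, at a maximizing parameter and using the PDE there) is genuinely different from the paper, which instead compares $u$ with the harmonic replacement $h$ on a geodesic ball and runs a maximum principle in the solid region $\{\rho\omega:\omega\in B_r(\omega_0),\,0<\rho<1\}$ (resp.\ a cylinder). Your version can be made to work, but two points need repair. First, the attainment claim is false as stated: for the heat flow \eqref{maxheats} the supremum need not be attained at any $t_0>0$, since $u(\omega,t)$ tends to the mean value of $|u_0|$ as $t\to\infty$ and may increase to it; for the Poisson case \eqref{maxpois} the supremum may sit at $\rho_0=0$. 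Both cases are easy to patch (the constant functions $\frac{1}{\sigma_d}\int_{\S^d}|u_0|\,\dsigma$ and $u(0)$ lie below $u^*$ and touch it there, with zero Laplacian), but the patch must be said. Second, ``a standard viscosity-to-distribution argument'' is not the standard viscosity test (which uses functions touching from \emph{above}); what you actually have is a lower touching function with nonnegative Laplacian at the contact point, and turning that into the sub-mean-value property requires a comparison argument with a strictly superharmonic perturbation of the harmonic replacement (e.g.\ adding $\epsilon(\cos d(\omega,\omega_0)-\cos r)$ on small geodesic balls). This should be spelled out, since it replaces the paper's Lemma on subharmonicity.

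The more serious gap is in part (iv), $p=2$. Your Green identity gives
\begin{equation*}
\int_{\S^d}|\nabla u^*|^2\,\dsigma-\int_{\S^d}|\nabla |u_0||^2\,\dsigma=-\int_{\S^d}(u^*-|u_0|)\,\Delta_{\S^d}u^*\,\dsigma-\int_{\S^d}(u^*-|u_0|)\,\Delta_{\S^d}|u_0|\,\dsigma,
\end{equation*}
and the sign information you invoke ($u^*-|u_0|\ge 0$ on $D$, $\Delta_{\S^d}u^*\ge 0$ on $D$) only controls the first term; the term with $\Delta_{\S^d}|u_0|$ has no sign, and no amount of smoothing of $u_0$ ``controls'' it. The way out is to regroup as in the paper: with $g=u^*-|u_0|\ge 0$ one writes $\|\nabla u_0\|_2^2=\|\nabla u^*-\nabla g\|_2^2$ and expands, so that everything reduces to the single inequality $\int_{\S^d}\nabla u^*\cdot\nabla g\,\dsigma\le 0$ (equivalently, rewrite your second term as $\int\nabla g\cdot\nabla|u_0|\,\dsigma$ and complete the square). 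Proving that inequality when $u^*$ is merely continuous and subharmonic on $\{g>0\}$ is precisely the content of Lemma \ref{integ_partes_sphere}, and its proof is the real technical work on the sphere: one must mollify $u^*$ (not $u_0$) in a way that preserves subharmonicity on a slightly shrunk set, which the paper does by averaging over $O(d+1)$ against a conjugation-invariant mollifier, combined with the truncation $g_\delta=(g-\delta)_+$. Your proposal correctly flags this as the main obstacle but offers no construction, so as it stands the $p=2$ case is not proved. For parts (i)--(iii) your sketch is fine provided you really do run the template of Theorems \ref{Thm1} and \ref{Thm2}, i.e.\ use the quantitative Lemma \ref{Lem7_Renan} on each component of the detachment set (the ``telescoping over local extrema'' remark only handles the variation bound, not the $L^p$ bounds for $1<p<\infty$), together with the spherical reduction to continuous data via the gradient estimate \eqref{Eq_crucial_Kin_sphere}.
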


\noindent {\sc Remark}: Since $\S^1 \sim \T$, in the case of the heat flow maximal operator, parts (i), (ii) and (iii) of Theorem \ref{Thm3} have already been considered in Theorem \ref{Thm2}, and the novel part here is actually (iv).

\subsection{Non-tangential maximal operators} The last operator considered here is the classical non-tangential maximal operator associated to the Poisson kernel \eqref{Intro_Poisson}. For $\alpha \geq 0$ we consider 
\begin{equation}\label{def_max_function_cone}
u^*(x)=\sup_{\substack{t >0 \\ |y-x|\le \alpha t}} P(\cdot,t)*|u_0|(y).
\end{equation}
This operator is bounded on $L^p(\mathbb R^d)$ for $1 < p \leq \infty$ (see \cite[Chapter II, Equation (3.18)]{SW}). A modification of \cite[proof of Theorem 1.4]{Ki} (here one must discretize in time and in the set of possible directions) yields that this maximal operator is bounded on $W^{1,p}(\R)$ for $1<p\leq\infty$, with $\|\nabla u^*\|_{L^p(\R^d)} \leq C\, \|\nabla u_0\|_{L^p(\R^d)}$ for some $C>1$. Here we establish the variation-diminishing property of this operator in dimension $d=1$.

\begin{theorem}\label{Thm5}
Let $\alpha\geq0$ and let $u^*$ be the maximal function defined in \eqref{def_max_function_cone}. The following propositions hold.
\smallskip
\begin{itemize}
\item[(i)] Let $1 < p \leq \infty$ and $u_0 \in W^{1,p}(\R)$. Then $u^{*} \in W^{1,p}(\R)$ and 
\begin{equation*}
\|(u^*)'\|_{L^p(\R)} \leq \|u_0'\|_{L^p(\R)}.
\end{equation*}

\vspace{0.15cm}

\item[(ii)] Let $u_0 \in W^{1,1}(\R)$. Then $u^{*} \in L^{\infty}(\R)$ and has a weak derivative $(u^*)'$ that satisfies 
\begin{equation*}
\|(u^*)'\|_{L^1(\R)} \leq \|u_0'\|_{L^1(\R)}.
\end{equation*}

\vspace{0.15cm}

\item[(iii)] Let $u_0$ be of bounded variation on $\R$. Then $u^*$ is of bounded variation on $\R$ and 
\begin{equation*}
V(u^*) \leq V(u_0).
\end{equation*}

\end{itemize}
\end{theorem}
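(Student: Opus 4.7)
The plan is to follow the scheme of \cite{CS} (also adopted for Theorems \ref{Thm1}--\ref{Thm2} above): show (a) that $u^*$ is continuous, (b) that $u^*$ is subharmonic on its detachment set $D^* := \{x\in\R : u^*(x) > |u_0(x)|\}$, and (c) that the variation bound then follows from convexity on each component of $D^*$. The only genuinely new ingredient is step (b), where the cone $\Gamma_\alpha(x) := \{(y,t) : t>0,\ |y-x|\leq \alpha t\}$ replaces the vertical line $\{x\}\times(0,\infty)$. Step (a) uses the smoothness of $u$ on $\R\times(0,\infty)$ together with decay of $u$ as $t\to\infty$ (after truncation in the BV case) to prevent near-maximizers from escaping.

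For step (b), fix $x_0 \in D^*$ and choose $(y_0,t_0)$ with $u(y_0,t_0) \geq u^*(x_0)-\varepsilon$, $t_0>0$, and $|y_0-x_0| < \alpha t_0$ strictly; by continuity of $u$ one may pass from $(y_0,t_0)$ to $(y_0,t_0+\delta)$ if the near-maximizer sits on the boundary of the cone. Since $u$ is harmonic on $\R^2_+$, the mean value property on the disk of radius $r$ around $(y_0,t_0)$ reads
\begin{equation*}
u(y_0,t_0) = \frac{1}{2\pi}\int_0^{2\pi} u(y_0+r\cos\theta,\,t_0+r\sin\theta)\,\d\theta.
\end{equation*}
The key geometric observation is that for $x_\theta := x_0+r\cos\theta$ one has $|(y_0+r\cos\theta)-x_\theta| = |y_0-x_0|$, so provided $\alpha r \leq \alpha t_0 - |y_0-x_0|$ the point $(y_0+r\cos\theta,t_0+r\sin\theta)$ lies in $\Gamma_\alpha(x_\theta)$ for every $\theta$, giving $u(y_0+r\cos\theta,t_0+r\sin\theta) \leq u^*(x_\theta)$. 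Averaging over $\theta$ and letting $\varepsilon\to 0$ yields
\begin{equation*}
u^*(x_0) \leq \frac{1}{2\pi}\int_0^{2\pi} u^*(x_0+r\cos\theta)\,\d\theta
\end{equation*}
for all sufficiently small $r>0$; a Taylor expansion (or mollification against a smooth bump) converts this into $(u^*)''\geq 0$ on $D^*$ in the distributional sense, hence $u^*$ is convex on each connected component of $D^*$.

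Step (c) then runs as in \cite{CS}. On each bounded component $(a_n,b_n)$ of $D^*$, continuity forces $u^*(a_n)=|u_0(a_n)|$ and $u^*(b_n)=|u_0(b_n)|$; a convex continuous function on $[a_n,b_n]$ is either monotone (whence $V_{(a_n,b_n)}(u^*)=\big||u_0(b_n)|-|u_0(a_n)|\big|\leq V_{(a_n,b_n)}(|u_0|)$) or has a single interior minimum $c_n$, in which case
\begin{equation*}
V_{(a_n,b_n)}(u^*)=|u_0(a_n)|+|u_0(b_n)|-2u^*(c_n)\leq |u_0(a_n)|+|u_0(b_n)|-2|u_0(c_n)|\leq V_{(a_n,b_n)}(|u_0|),
\end{equation*}
using $u^*\geq |u_0|$ pointwise and testing the variation of $|u_0|$ against the partition $\{a_n,c_n,b_n\}$. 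Unbounded components are handled analogously via the fact that a bounded convex function is eventually monotone (and $u^*\leq Mu_0$ is bounded). Summing over components and using $u^*=|u_0|$ on $\R\setminus D^*$ gives $V(u^*)\leq V(|u_0|)\leq V(u_0)$, proving (iii); parts (i) and (ii) then follow from (iii) through the same Sobolev/BV approximation arguments already used in the paper.

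The main obstacle is the careful execution of step (b): the reduction to a strict-interior near-maximizer (ruling out $t_0=\infty$ and handling boundary-of-cone maximizers by a perturbation in $t$) and the passage from the non-standard weighted mean-value inequality to distributional convexity of a merely continuous $u^*$ both require some care, though they closely parallel analyses already performed in the earlier proofs.
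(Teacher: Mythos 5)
The decisive step (b) of your plan has a genuine gap. Your mean value argument needs, for a \emph{fixed} radius $r$, near-maximizers $(y_0,t_0)$ of $u$ in the cone $\Gamma_\alpha(x_0)$ whose distance to the lateral boundary is at least $\alpha r$, uniformly as $\varepsilon\to 0$, and nothing guarantees this: the supremum defining $u^*(x_0)$ may be attained, or only approached, along the rays $|y-x_0|=\alpha t$ (and, for merely bounded Lipschitz data --- which is all the Lipschitz reduction leaves you with in parts (i) and (iii) --- possibly only as $t\to\infty$, since $u(\cdot,t)$ need not decay). Your proposed fix, replacing a boundary near-maximizer $(y_0,t_0)$ by $(y_0,t_0+\delta)$, gains a margin of only $\alpha\delta$ while the value of $u$ may drop by an amount of order $\delta$ (the derivative of $u$ transversal to the cone boundary has no reason to be small at a near-maximizer). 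Optimizing $\delta$ against the admissible radius therefore yields at best
\begin{equation*}
u^*(x_0) \;\le\; \frac{1}{2\pi}\int_0^{2\pi} u^*(x_0+r\cos\theta)\,\d\theta \;+\; O(r),
\end{equation*}
and an inequality with an $O(r)$ error is satisfied by concave kinks such as $x\mapsto -c|x-x_0|$ with $c$ small, so it does not imply convexity. In other words, the step ``letting $\varepsilon\to0$ yields the inequality for all sufficiently small $r$'' is exactly what fails when the maximizing sequence hugs the cone boundary.

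This boundary regime is where the paper's Lemma \ref{Lem22} does its real work, and you would need its idea (or a genuine substitute). The paper splits into two cases: either $u^*(x_0)$ exceeds $\sup\{u(y,t):|y-x_0|=\alpha t\}$ by some $d>0$, in which case no harmonicity is used at all --- the bound ${\rm Lip}(u(\cdot,t))\le{\rm Lip}(u_0)$ lets one shrink the cone aperture by $d/(2\,{\rm Lip}(u_0))$ and translate the vertex, giving $u^*(x_0\pm\varepsilon)\ge u^*(x_0)$ directly; or else $u^*(x_0)=\max\{u_R^*(x_0),u_L^*(x_0)\}$, the maximal functions along the two boundary rays. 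In the latter case the paper rotates the relevant ray to the vertical and applies the solid mean value property on disks centered on the ray, bounding $u$ at \emph{every} point of a disk of fixed radius by $u_R^*$ evaluated at the base point of the ray through that point --- no cone-membership condition is required, which is precisely what your inclusion into $\Gamma_\alpha(x_\theta)$ cannot deliver near the boundary --- and it uses $x_0\in A$ to keep the near-maximizing times along the ray bounded away from the vertex, so the radius can be fixed; a short separate argument then upgrades the resulting averaged inequality (valid only for arbitrarily small increments) to convexity. Your steps (a) and (c) are fine and match the paper, with the caveat that (i) and (ii) do not literally ``follow from (iii)'': they are obtained from the subharmonicity via Lemma \ref{Lem7_Renan} and the gluing of weak derivatives across the detachment set, as in the proof of Theorem \ref{Thm1}.
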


\subsection{A brief strategy outline} The proofs of Theorems \ref{Thm1} - \ref{Thm5} follow the same broad outline, each with their own technicalities. One component of the proof is to establish that it is sufficient to consider a Lipschitz continuous initial datum $u_0$. The second and crucial component of the proof is to establish that, for a Lipschitz continuous initial datum $u_0$, the maximal function is {\it subharmonic in the detachment set}. The steps leading to these results are divided in several auxiliary lemmas in the proofs of each theorem.

\smallskip

We remark that the subharmonicity property for the non-tangential maximal function \eqref{def_max_function_cone} in dimension $d>1$ is not true. We present a counterexample after the proof of Theorem \ref{Thm5}.

\section{Proof of Theorem \ref{Thm1}: Maximal operators and elliptic equations} \label{Section2}

\subsection{Preliminaries on the kernel}  \label{Section2.1}
Let $a>0$ and $b > 0$. We first observe that the function $\widehat{\varphi}_{a,b} (\cdot, t): \R^d \to \R$ defined in \eqref{Intro_Def_phi_hat_a_b} belongs to the Schwartz class for each $t>0$. Moreover, the function $g :[0,\infty) \to \R^+$ defined by 
$$ \widehat{\varphi}_{a,b} (\xi, t) =: g(|\xi|^2)$$
is {\it completely monotone}, in the sense that it verifies $(-1)^n g^{(n)}(s) \geq 0$ for $s>0$ and $n =0,1,2,\ldots$ and $g(0^+) = g(0)$. We may hence invoke a classical result of Schoenberg \cite[Theorems 2 and 3]{Scho} to conclude that there exists a finite nonnegative measure $\mu_{a,b,t}$ on $[0,\infty)$ such that 
\begin{equation*}
 \widehat{\varphi}_{a,b} (\xi, t) = \int_0^{\infty} e^{-\pi \lambda |\xi|^2} \,\d\mu_{a,b,t}(\lambda).  \end{equation*}
An application of Fubini's theorem gives us
\begin{equation}\label{Schoenberg_app}
{\varphi}_{a,b} (x, t) = \int_{\R^d} \widehat{\varphi}_{a,b}(\xi, t) \,e^{2 \pi i x \cdot \xi}\,\d\xi = \int_0^{\infty} \lambda^{-\frac{d}{2}}\, e^{-\frac{\pi |x|^2}{\lambda}} \,\d\mu_{a,b,t}(\lambda).
\end{equation}
In particular, \eqref{Schoenberg_app} implies that $\varphi_{a,b}(\cdot,t):\R^d \to \R$ is nonnegative and radial decreasing. It is convenient to record the explicit form of $\mu_{a,b,t}$. Starting from the identity \cite[page 6]{SW}, for $\beta >0$, 
\begin{equation*}
e^{-\beta} = \frac{1}{\sqrt{\pi}} \int_{0}^{\infty} \frac{e^{-u}}{\sqrt{u}} \, e^{-\frac{\beta^2}{4u}}\,\du = \frac{1}{2\pi} \int_0^{\infty} e^{-\pi \sigma \beta^2} \, e^{-\frac{1}{4\pi\sigma}}\, \sigma^{-\frac{3}{2}}\,\d\sigma,
\end{equation*}
we make $\beta = \frac{t}{2a} \left(b^2 + 16 a \pi^2 |\xi|^2\right)^{1/2}$ to obtain
\begin{equation}\label{Explicit_mu_a_b_t}
\d\mu_{a,b,t}(\lambda)= \left(e^{\frac{tb}{2a}}\,\frac{t}{\sqrt{a}} \,e^{-\frac{\lambda b^2}{16 \pi a}}\, e^{-\frac{\pi t^2}{a\lambda}}\, \,\lambda^{-\frac{3}{2}}\right)\,\d \lambda.
\end{equation}

\smallskip

From \eqref{Schoenberg_app}, \eqref{Explicit_mu_a_b_t} and dominated convergence we see that, for a fixed $x \neq 0$,
\begin{equation*}
\lim_{t \to 0^+} {\varphi}_{a,b} (x, t) = 0\,, 
\end{equation*}
and, for a fixed $\delta >0$,
\begin{equation}\label{Cond_2_int_conv_ap_id}
\lim_{t \to 0^+} \int_{|x| \geq \delta} {\varphi}_{a,b} (x, t) \,\dx = 0.
\end{equation}
For $f \in L^p(\R^d)$ with $1 \leq p < \infty$, it follows from  \eqref{Intro_eq_integral_1} and \eqref{Cond_2_int_conv_ap_id} that
\begin{equation}\label{L_p_conv_kernel_a_b}
\lim_{t\to 0^+} \|{\varphi}_{a,b} (\cdot, t) *f - f\|_{L^p(\R^d)} = 0.
\end{equation}
The additional fact that ${\varphi}_{a,b} (\cdot, t)$ is radial decreasing for each $t>0$ implies the pointwise convergence  
\begin{equation}\label{Point_conv_kernel_a_b}
\lim_{t\to 0^+} {\varphi}_{a,b} (\cdot, t) *f(x) = f(x) \ \ \ \ {\rm for \ \ a.e.} \ \ x \in \R^d.
\end{equation}
In \eqref{Point_conv_kernel_a_b} we may allow $f \in L^p(\R^d)$ with $1 \leq p \leq \infty$ and the convergence happens at every point in the Lebesgue set of $f$. The proofs of \eqref{L_p_conv_kernel_a_b} and \eqref{Point_conv_kernel_a_b} follow along the same lines of the proofs of \cite[Chapter I, Theorems 1.18 and 1.25]{SW} and we omit the details.

\smallskip

From \eqref{Schoenberg_app} and \eqref{Explicit_mu_a_b_t} we see that ${\varphi}_{a,b} \in C^{\infty}(\R^d \times (0,\infty))$. Moreover its decay is strong enough to assure that, if the initial datum $u_0 \in L^p(\R^d)$ for some $1 \leq p \leq \infty$, then $u(x,t) = {\varphi}_{a,b} (\cdot, t)*u_0(x) \in C^{\infty}(\R^d \times (0,\infty))$, with $D^{\alpha}u(x,t) = (D^{\alpha}{\varphi}_{a,b} (\cdot, t))*u_0(x)$ for any multi-index $\alpha \in (\Z^+)^{d+1}$. Finally, observe that $u(x,t)$ solves the partial differential equation
\begin{equation}\label{Ell_Eq_subhamonicity}
au_{tt} - bu_t + \Delta u = 0 \ \ \  {\rm in}  \ \ \  \R^d \times (0,\infty).
\end{equation}
This follows since the kernel $\varphi(x,t)$ solves the same equation, a fact that can be verified by differentiating under the integral sign the leftmost identity in \eqref{Schoenberg_app}. We also remark that if  $u_0 \in C(\R^d) \cap L^p(\R^d)$ for some $1\leq p < \infty$, or if $u_0$ is bounded and Lipschitz continuous, then the function $u(x,t)$ is continuous up to the boundary $\R^d \times \{t=0\}$ (this follows from \eqref{Point_conv_kernel_a_b} and \eqref{Imp_chain_cont_local} below).

\subsection{Auxiliary lemmas} In order to prove Theorem \ref{Thm1}, we may assume without loss of generality that $u_0 \geq 0$. In fact, if $u_0 \in W^{1,p}(\R^d)$ we have $|u_0| \in W^{1,p}(\R^d)$ and $|\nabla |u_0|| = |\nabla u_0|$ a.e. if $u_0$ is real-valued (in the general case of $u_0$ complex-valued we have $|\nabla |u_0|| \leq |\nabla u_0|$ a.e), and if $u_0$ is of bounded variation on $\R$ we have $V(|u_0|) \leq V(u_0)$. We adopt such assumption throughout the rest of this section.

\smallskip

The cases when $a=0$ (heat kernel) or $b=0$ (Poisson kernel) were already considered in \cite[Theorems 1 and 2]{CS}, so we focus in the remaining case $a>0$, $b>0$ \footnote{By appropriate dilations in the space variable $x$ and the time variable $t$, we could assume that $a=b=1$. However, this reduction is mostly aesthetical and offers no major technical simplification.}. We start with some auxiliary lemmas, following the strategy outlined in \cite{CS}. Throughout this section we write
$${\rm Lip}(u) = \sup_{\substack{x,y \in \R^d \\ x\neq y}} \frac{|u(x) - u(y)|}{|x-y|}$$
for the Lipschitz constant of a function $u:\R^d \to \R$. Let $B_r(x) \subset \R^d$ denote the open ball of radius $r$ and center $x$, and let $\overline{B_r(x)}$ denote the corresponding closed ball. When $x =0$ we shall simply write $B_r$.

\begin{lemma}[Continuity] \label{lem_continuity}
Let $a,b>0$ and $u^*$ be the maximal function defined in \eqref{def_max_function_a_b}.
\begin{itemize}
\item[(i)] If $u_0 \in C(\R^d) \cap L^p(\R^d)$, for some $1\leq p < \infty$, then $u^* \in C(\R^d)$. 
\smallskip
\item[(ii)] If $u_0$ is bounded and Lipschitz continuous then $u^*$ is bounded and Lipschitz continuous with ${\rm Lip}(u^*) \leq {\rm Lip}(u_0)$.
\end{itemize}
\end{lemma}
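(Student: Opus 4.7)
The plan is to handle (ii) first, as it is more elementary. Writing $u(x,t) = \int_{\R^d}\varphi_{a,b}(y,t)\,u_0(x-y)\,\dy$ and using that $\varphi_{a,b}(\cdot,t) \geq 0$ integrates to one, one immediately has $u(x,t) \leq \|u_0\|_\infty$ and
\[
|u(x,t) - u(x',t)| \leq \int \varphi_{a,b}(y,t)\,|u_0(x-y) - u_0(x'-y)|\,\dy \leq \mathrm{Lip}(u_0)\,|x-x'|,
\]
both uniformly in $t$. A standard $\varepsilon$-optimizer argument (pick $t_\varepsilon$ with $u(x,t_\varepsilon) > u^*(x) - \varepsilon$, compare with $u(x',t_\varepsilon) \leq u^*(x')$, then symmetrize in $x,x'$) transfers both bounds to $u^*$.

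For (i), the strategy is to split the continuity of $u^*$ into lower and upper semicontinuity. Lower semicontinuity is automatic, since $u^*$ is the pointwise supremum of the jointly continuous family $\{u(\cdot,t)\}_{t>0}$ recorded in Section~\ref{Section2.1}. For upper semicontinuity at a fixed $x_0 \in \R^d$, I would pick $x_n \to x_0$ and $t_n > 0$ with $u(x_n,t_n) > u^*(x_n) - 1/n$, and after extracting a subsequence assume $t_n \to t_\infty \in [0,\infty]$; the goal is $\limsup u(x_n,t_n) \leq u^*(x_0)$. The interior case $t_\infty \in (0,\infty)$ follows at once from the joint continuity of $u$. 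The two boundary cases reduce respectively to: (a) local uniform convergence $u(\cdot,t) \to u_0$ as $t \to 0^+$; and (b) global uniform decay $\|u(\cdot,t)\|_\infty \to 0$ as $t \to \infty$.

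For (a), on a fixed compact $K$ I would write $u(x,t) - u_0(x) = \int \varphi_{a,b}(y,t)(u_0(x-y) - u_0(x))\,\dy$, split the integral at $|y| = \delta$, bound the near part uniformly in $x \in K$ by the modulus of continuity of $u_0$ on a compact enlargement of $K$, control the $u_0(x)$-contribution of the tail by $\sup_K|u_0|$ times \eqref{Cond_2_int_conv_ap_id}, and estimate the remaining tail by Hölder together with
\[
\|\varphi_{a,b}(\cdot,t)\chi_{B_\delta^c}\|_{p'} \leq \varphi_{a,b}(\delta e,t)^{(p'-1)/p'} \longrightarrow 0 \quad \text{as } t \to 0^+,
\]
which follows from the radial decrease and unit mass of $\varphi_{a,b}(\cdot,t)$ together with the pointwise decay $\varphi_{a,b}(\delta e, t) \to 0$ noted in Section~\ref{Section2.1}. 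For (b), the same radial-decrease argument yields $\|u(\cdot,t)\|_\infty \leq \varphi_{a,b}(0,t)^{(p'-1)/p'}\|u_0\|_p$, and $\varphi_{a,b}(0,t) = \int \widehat{\varphi}_{a,b}(\xi,t)\,\d\xi \to 0$ as $t \to \infty$ by dominated convergence on the Fourier side (the integrand is dominated by $\widehat{\varphi}_{a,b}(\xi,1)$ for $t \geq 1$, which lies in the Schwartz class). I expect the most delicate step to be (a): because $u_0 \in C(\R^d) \cap L^p(\R^d)$ is not assumed globally bounded, the tail contribution cannot be handled by a direct pointwise estimate and must instead be treated through $L^p$--$L^{p'}$ duality combined with the concentration of the kernel.
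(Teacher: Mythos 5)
Your part (ii) coincides with the paper's argument: for each fixed $t$ the function $u(\cdot,t)$ is bounded by $\|u_0\|_\infty$ and Lipschitz with constant at most ${\rm Lip}(u_0)$, and these bounds pass to the pointwise supremum. For part (i) you take a genuinely different route. The paper exploits translation invariance and sublinearity of the maximal operator: it bounds $|\tau_h u^*(x)-u^*(x)| \leq (\tau_h u_0-u_0)^*(x)$ and estimates $|\tau_h u_0-u_0|*\varphi_{a,b}(\cdot,t)(x)$ uniformly in $t>0$ by splitting the convolution at $|y|=1$, using local uniform continuity of $u_0$ for the near part and H\"older together with the uniform-in-$t$ bound on $\|\chi_{\{|\cdot|\geq 1\}}\varphi_{a,b}(\cdot,t)\|_{p'}$ for the tail. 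You instead split continuity into lower semicontinuity (automatic for a supremum of continuous functions) and upper semicontinuity, which you get by compactifying the time parameter and analyzing the endpoints: locally uniform convergence $u(\cdot,t)\to u_0$ as $t\to 0^+$ (your step (a), correctly proved via the $|y|=\delta$ splitting, \eqref{Cond_2_int_conv_ap_id}, and the radial-decrease/H\"older tail bound), and uniform decay $\|u(\cdot,t)\|_\infty\to 0$ as $t\to\infty$ (your step (b), which is precisely part (i) of Lemma \ref{lem_boundedness_large_times}, proved in the paper by the same $L^1$--$L^\infty$ interpolation of $\|\varphi_{a,b}(\cdot,t)\|_{p'}$). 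Both arguments are correct. The paper's is shorter and transfers a quantitative local modulus of continuity from $u_0$ to $u^*$; yours is longer but yields as byproducts two facts the paper needs elsewhere anyway (the large-time behaviour of Lemma \ref{lem_boundedness_large_times} and the locally uniform attainment of the boundary datum, cf. the remark after \eqref{Point_conv_kernel_a_b}). Two cosmetic points: for $p=1$ your exponent $(p'-1)/p'$ should be read as $1$, and in the case $t_\infty=0$ you should note explicitly that $u^*(x_0)\geq u_0(x_0)$ at every point, which your step (a) (or \eqref{Point_conv_kernel_a_b} at points of continuity) supplies.
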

\begin{proof} Let us denote $\tau_hu_0:= u_0(x-h)$. Given $x \in \R^d$, we can choose $\delta >0$ such that
\begin{align}\label{Imp_chain_cont_local}
\begin{split}
& |\tau_h u_0 - u_0|*\varphi_{a,b}(\cdot,t)(x) \\
&= \int_{|y| < 1} |\tau_h u_0 - u_0|(x-y)\,\varphi_{a,b}(y,t)\,\dy + \int_{|y| \geq 1} |\tau_h u_0 - u_0|(x-y)\,\varphi_{a,b}(y,t)\,\dy\\
& \leq \sup_{w \in B_1(x)}|\tau_h u_0 - u_0|(w) + \|\tau_h u_0 - u_0\|_p\,\|\chi_{\{|\cdot|\geq 1\}} \,\varphi_{a,b}(\cdot,t)\|_{p'}\\
& < \varepsilon
\end{split}
\end{align}
whenever $|h| < \delta$, for all $t >0$. Above we have used the fact that $\|\chi_{\{|\cdot|\geq 1\}}\, \varphi_{a,b}(\cdot,t)\|_{p'}$ is uniformly bounded. Using the sublinearity, we then arrive at 
\begin{equation*}
\big|\tau_h u^*(x) - u^*(x)\big| \leq (\tau_h u_0 - u_0)^*(x) \leq \varepsilon
\end{equation*}
for $|h| < \delta$, which shows that $u^*$ is continuous at the point $x$.

\smallskip

\noindent (ii) Observe that for each $t>0$ the function $u(x,t) = \varphi_{a,b}(\cdot,t) * u_0(x)$ is bounded by $\|u_0\|_{\infty}$ and Lipschitz continuous with ${\rm Lip}(u(\cdot,t)) \leq {\rm Lip}(u_0)$. The result then follows since we are taking a pointwise supremum of uniformly bounded and Lipschitz functions.
\end{proof}

\begin{lemma}[Behaviour at large times]\label{lem_boundedness_large_times} Let $a,b>0$ and $u(x,t) = \varphi_{a,b}(\cdot, t)*u_0(x)$.
\begin{itemize}
\item[(i)] If $u_0 \in L^p(\R^d)$ for some $1\leq p < \infty$, then for a given $\varepsilon >0$ there exists a time $t_{\varepsilon} < \infty$ such that $\|u(\cdot, t)\|_{\infty} < \varepsilon$ for all $t > t_{\varepsilon}$.
\smallskip
\item[(ii)] If $u_0$ is bounded and if $r>0$ and $\varepsilon >0$ are given, then there exists a time $t_{r,\varepsilon} <\infty$ such that $ |u(x, t) -  u(y,t)| < \varepsilon$ for all $x,y \in B_r$  and $t > t_{r,\varepsilon}$.
\end{itemize}
\end{lemma}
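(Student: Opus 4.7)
The plan is to reduce each part to the decay of a suitable norm of the kernel $\varphi_{a,b}(\cdot,t)$ (or of its gradient) as $t \to \infty$, which can then be verified through the Schoenberg representation \eqref{Schoenberg_app}--\eqref{Explicit_mu_a_b_t} together with semigroup and Fourier arguments.

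For part (i), H\"older's inequality applied to the convolution yields $\|u(\cdot,t)\|_\infty \leq \|u_0\|_p\,\|\varphi_{a,b}(\cdot,t)\|_{p'}$, so the task reduces to showing $\|\varphi_{a,b}(\cdot,t)\|_{p'} \to 0$ for every $p' \in (1,\infty]$. Plancherel gives $\|\varphi_{a,b}(\cdot,t)\|_2^2 = \int_{\R^d} e^{-2t\lambda(\xi)}\,\d\xi$ with $\lambda(\xi):=(-b+\sqrt{b^2+16a\pi^2|\xi|^2})/(2a)$, and this tends to zero by dominated convergence (for $t\geq t_0>0$ the integrand is dominated by the integrable $e^{-2t_0\lambda(\xi)}$, using that $\lambda(\xi)$ grows linearly at infinity). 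The semigroup identity combined with Young's inequality $\|f*f\|_\infty \leq \|f\|_2^2$ then gives $\|\varphi_{a,b}(\cdot,2t)\|_\infty \leq \|\varphi_{a,b}(\cdot,t)\|_2^2 \to 0$, and the log-convexity $\|f\|_{p'} \leq \|f\|_1^{1/p'}\|f\|_\infty^{1-1/p'}$, together with $\|\varphi_{a,b}(\cdot,t)\|_1 = 1$, extends the decay to every $p' \in (1,\infty]$.

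For part (ii), the fundamental theorem of calculus and translation-invariance of Lebesgue measure give $|u(x,t)-u(y,t)| \leq \|u_0\|_\infty\,|x-y|\,\|\nabla\varphi_{a,b}(\cdot,t)\|_1 \leq 2r\,\|u_0\|_\infty\,\|\nabla\varphi_{a,b}(\cdot,t)\|_1$ for $x,y \in B_r$, so the problem reduces to showing $\|\nabla\varphi_{a,b}(\cdot,t)\|_1 \to 0$. Differentiating \eqref{Schoenberg_app} under the integral sign and using the elementary identity $\|\nabla G_\lambda\|_1 = c_d\,\lambda^{-1/2}$ for the Gaussian $G_\lambda(x) := \lambda^{-d/2}e^{-\pi|x|^2/\lambda}$, we obtain $\|\nabla\varphi_{a,b}(\cdot,t)\|_1 \leq c_d \int_0^\infty \lambda^{-1/2}\,\d\mu_{a,b,t}(\lambda)$. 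I then split at a cutoff $R>0$: since $\mu_{a,b,t}$ is a probability measure, the tail satisfies $\int_R^\infty \lambda^{-1/2}\,\d\mu_{a,b,t} \leq R^{-1/2}$; for the head $\int_0^R$, the uniform bound $e^{-\pi t^2/(a\lambda)} \leq e^{-\pi t^2/(aR)}$ on $[0,R]$, combined with the substitution $u=t^2/\lambda$ to absorb the $\lambda^{-2}$ singularity at the origin, gives a bound of order $e^{bt/(2a)-\pi t^2/(aR)}/t$, which vanishes as $t\to\infty$ for each fixed $R$. Choosing $R$ large and then $t$ large finishes the argument.

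The main technical obstacle lies in (ii): a direct semigroup argument only delivers monotonicity of $\|\nabla\varphi_{a,b}(\cdot,t)\|_1$ in $t$, and Plancherel only produces $L^2$-decay of the gradient. The workaround is to revert to the Gaussian building blocks of \eqref{Schoenberg_app}, where the sharp $L^1$-gradient bound $c_d\lambda^{-1/2}$ is available, and to exploit the fact that the probability measure $\mu_{a,b,t}$ pushes its mass to $\lambda=\infty$ (at the parabolic scale $\lambda\sim t$) as $t\to\infty$; the one delicate point is the singularity of $\lambda^{-1/2}$ at the origin, which is tamed by the super-Gaussian factor $e^{-\pi t^2/(a\lambda)}$ in the density of $\mu_{a,b,t}$ recorded in \eqref{Explicit_mu_a_b_t}.
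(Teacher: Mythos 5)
Your proposal is correct and follows essentially the same route as the paper: H\"older plus the interpolation $\|\varphi_{a,b}(\cdot,t)\|_{p'}\leq\|\varphi_{a,b}(\cdot,t)\|_\infty^{1-1/p'}\|\varphi_{a,b}(\cdot,t)\|_1^{1/p'}$ with $\|\varphi_{a,b}(\cdot,t)\|_1=1$ for (i), and for (ii) the reduction to $\|\nabla\varphi_{a,b}(\cdot,t)\|_1$ via the Schoenberg representation, which produces exactly the factor $\int_0^\infty\lambda^{-1/2}\,\d\mu_{a,b,t}(\lambda)$. The only differences are cosmetic: you get $\|\varphi_{a,b}(\cdot,t)\|_\infty\to0$ by Plancherel and the semigroup property rather than directly from the Fourier inversion formula, and you kill $\int_0^\infty\lambda^{-1/2}\,\d\mu_{a,b,t}(\lambda)$ by a cutoff splitting (probability-measure tail bound plus an explicit head estimate) where the paper rescales $\lambda=t\nu$ and applies dominated convergence.
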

\begin{proof} (i) The first statement follows from H\"{o}lder's inequality
$$\|u(\cdot, t)\|_{\infty} \leq \|u_0\|_p\,  \|\varphi_{a,b}(\cdot, t)\|_{p'}$$
and the fact that $\|\varphi_{a,b}(\cdot, t)\|_{p'} \to 0$ as $t \to \infty$. The latter follows from the estimate 
$$\|\varphi_{a,b}(\cdot, t)\|_{p'} \leq \|\varphi_{a,b}(\cdot, t)\|_{\infty}^{\frac{p' -1}{p'}}\  \|\varphi_{a,b}(\cdot, t)\|_{1}^{\frac{1}{p'}}\,,$$ 
observing that $\|\varphi_{a,b}(\cdot, t)\|_{1} = 1$ and $\|\varphi_{a,b}(\cdot, t)\|_{\infty} \to 0$ as $t \to \infty$ by the leftmost identity in \eqref{Schoenberg_app} and dominated convergence.

\smallskip

\noindent (ii) Since $\varphi_{a,b}(\cdot,t)$ is in the Schwartz class, for every index $k\in\{1,\ldots,d\}$ we have $$\frac{\partial u}{\partial x_k}(x,t)=\frac{\partial\varphi_{a,b}}{\partial x_k}(\cdot,t)*u_0(x).$$ This implies that $u(\cdot,t)$ is a Lipschitz function with constant bounded by $\|u_0\|_\infty\sum_{k=1}^d\big\|\frac{\partial\varphi_{a,b}}{\partial x_k}(\cdot,t)\big\|_1$. By \eqref{Schoenberg_app}, \eqref{Explicit_mu_a_b_t} and Fubini's theorem,
\begin{align*}
\left\|\frac{\partial\varphi_{a,b}}{\partial x_k}(\cdot,t)\right\|_1&=\left(\int_{\mathbb R^d}2\pi|x_k|e^{-\pi|x|^2}\d x\right)\left(\int_0^\infty\lambda^{-1/2}\, \d\mu_{a,b,t}(\lambda)\right)\\
&=\left(\int_{\mathbb R^d}2\pi|x_k|e^{-\pi|x|^2}\d x\right)\left(\int_0^\infty\frac{t}{\sqrt{a}\lambda^2}e^{-\frac{\lambda}{16\pi a}\left(b-\frac{4\pi t}{\lambda}\right)^2}\d\lambda\right).
\end{align*}
Setting $\lambda=t\nu$ and applying dominated convergence, one concludes that the second factor converges to $0$ as $t\to\infty$. The result plainly follows from this.

\end{proof}

We now start to explore the qualitative properties of the underlying elliptic equation \eqref{Ell_Eq_subhamonicity}. We say that a continuous function $f$ is {\it subharmonic} in an open set $A \subset \R^d$ if, for every $x \in A$, and every ball $\overline{B_r(x)} \subset A$ we have
$$f(x) \leq \frac{1}{\sigma_{d-1}}\int_{\S^{d-1}} f(x +r\xi)\,{\rm d}\sigma(\xi),$$
where $\sigma_{d-1}$ denotes the surface area of the unit sphere $\S^{d-1}$, and ${\rm d}\sigma$ denotes its surface measure. 

\begin{lemma}[Subharmonicity]\label{lem_subharmonicity} Let $a,b>0$ and $u^*$ be the maximal function defined in \eqref{def_max_function_a_b}. Let $u_0 \in C(\R^d) \cap L^p(\R^d)$ for some $1\leq p < \infty$ or $u_0$ be bounded and Lipschitz continuous. Then $u^*$ is subharmonic in the open set $A = \{x \in \R^d; \,u^*(x) > u_0(x)\}$.
\end{lemma}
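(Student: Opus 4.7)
The strategy is to show that, at each $x_0 \in A$, the maximal function $u^*$ satisfies the local spherical mean value inequality
\[
u^*(x_0) \;\leq\; \frac{1}{\sigma_{d-1}}\int_{\S^{d-1}} u^*(x_0 + r\xi)\,\d\sigma(\xi)
\]
for every $r > 0$ with $\overline{B_r(x_0)} \subset A$. The driving idea is that, at a time $t^* \in (0,\infty)$ realizing $u^*(x_0) = u(x_0, t^*)$, the elliptic equation \eqref{Ell_Eq_subhamonicity} together with the first- and second-order optimality conditions in $t$ produces a nonnegative spatial Laplacian, which then upgrades to subharmonicity for $u^*$ through a touching argument.

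I would first argue that such a $t^*$ exists. Since $x_0 \in A$, we have $u^*(x_0) > u_0(x_0) = \lim_{t \to 0^+} u(x_0, t)$ by \eqref{Point_conv_kernel_a_b}, so the supremum cannot be approached as $t \to 0^+$. In the $L^p$ case, Lemma \ref{lem_boundedness_large_times}(i) yields $\|u(\cdot, t)\|_\infty \to 0$ as $t \to \infty$, while $u^*(x_0) > u_0(x_0) \geq 0$ forces $u^*(x_0) > 0$, ruling out the limit $t\to\infty$. Hence $t^* \in (0, \infty)$. At such $t^*$, the smooth map $t \mapsto u(x_0, t)$ has an interior maximum, so $u_t(x_0, t^*) = 0$ and $u_{tt}(x_0, t^*) \leq 0$. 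Substituting into \eqref{Ell_Eq_subhamonicity}:
\[
\Delta_x u(x_0, t^*) \;=\; b\,u_t(x_0, t^*) - a\,u_{tt}(x_0, t^*) \;=\; -a\,u_{tt}(x_0, t^*) \;\geq\; 0.
\]
Setting $\Phi(y) := u(y, t^*)$, I obtain a smooth function touching $u^*$ from below at $x_0$, i.e.\ $\Phi(x_0) = u^*(x_0)$ and $\Phi \leq u^*$ globally, with $\Delta \Phi(x_0) \geq 0$.

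From here the subharmonicity of $u^*$ at $x_0$ follows via the standard characterization: a continuous function is classically subharmonic if and only if for every $C^2$ function $\psi$ that touches it from above at a point (i.e.\ $u^* \leq \psi$ near $x_0$ and $u^*(x_0) = \psi(x_0)$), one has $\Delta \psi(x_0) \geq 0$ (the viscosity formulation, equivalent to the mean value version for continuous functions). Indeed, any such $\psi$ satisfies $\psi \geq u^* \geq \Phi$ near $x_0$ with equality at $x_0$, making $x_0$ a local minimum of the smooth function $\psi - \Phi$; consequently $D^2\psi(x_0) \geq D^2\Phi(x_0)$ in the PSD sense and $\Delta \psi(x_0) \geq \Delta \Phi(x_0) \geq 0$. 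Together with the continuity of $u^*$ established in Lemma \ref{lem_continuity}, this yields the required spherical mean inequality throughout $A$.

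In the bounded Lipschitz case where $u_0 \notin L^p$ for any $p < \infty$, the supremum may be approached only along a sequence $t_n \to \infty$, and this case I would treat separately. For any $r > 0$ with $\overline{B_r(x_0)} \subset A$, Lemma \ref{lem_boundedness_large_times}(ii) gives $|u(x, t_n) - u(x_0, t_n)| \to 0$ uniformly on $B_r(x_0)$, which combined with $u(x_0, t_n) \to u^*(x_0)$ forces $u^*(x) \geq u^*(x_0)$ on $B_r(x_0)$, so the mean value inequality is trivial. The main obstacle I anticipate is precisely the bridge from the pointwise inequality $\Delta \Phi(x_0) \geq 0$ to a genuine spherical mean inequality for $u^*$: a direct Taylor expansion around $x_0$ gives only $\tfrac{1}{\sigma_{d-1}}\int_{\S^{d-1}}\Phi(x_0 + r\xi)\,\d\sigma(\xi) = \Phi(x_0) + \tfrac{r^2}{2d}\Delta\Phi(x_0) + O(r^4)$, which closes only when $\Delta \Phi(x_0) > 0$; the degenerate case $\Delta \Phi(x_0) = 0$ (equivalently $u_{tt}(x_0, t^*) = 0$) is what makes the touching-function formulation essential.
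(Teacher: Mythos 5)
Your argument is correct, but it takes a genuinely different route from the paper. The paper never asks whether the supremum in $t$ is attained: it takes the harmonic replacement $h$ of $u^*$ on a ball $\overline{B_r(x_0)}\subset A$, applies the maximum principle for \eqref{Ell_Eq_subhamonicity} to $v=u-h$ on cylinders $\overline{B_r(x_0)}\times[0,t_1]$ with the top controlled by Lemma \ref{lem_boundedness_large_times}\,(ii), and concludes $u^*\leq h$ in the ball, which is the mean value inequality. You instead argue pointwise: the supremum at $x_0\in A$ is attained at an interior time $t^*$ (small times excluded because $u$ is continuous up to the boundary with value $u_0(x_0)<u^*(x_0)$, large times by Lemma \ref{lem_boundedness_large_times}\,(i)), the optimality conditions $u_t=0$, $u_{tt}\leq 0$ combined with \eqref{Ell_Eq_subhamonicity} give $\Delta_x u(x_0,t^*)\geq 0$, and the smooth function $u(\cdot,t^*)$ touching $u^*$ from below transfers this sign to any $C^2$ test function touching $u^*$ from above; the case where the supremum is only approached as $t\to\infty$ (possible for bounded Lipschitz data) is disposed of via Lemma \ref{lem_boundedness_large_times}\,(ii), which makes $x_0$ a local minimum of $u^*$. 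Two caveats: (1) your proof leans on the equivalence between viscosity subsolutions of $-\Delta w\leq 0$ and mean-value subharmonicity for continuous functions, which is true and standard but stated without proof or reference, and whose usual proof is itself a comparison-with-harmonic-functions argument — essentially the computation the paper carries out directly, so part of the work is outsourced rather than removed; (2) to apply that equivalence on $A$ you need the viscosity inequality at every point of $A$, so the degenerate large-time case should also be phrased as the viscosity test holding there (it does trivially, since any test function touching from above has a local minimum at $x_0$), rather than mixing a direct mean-value verification at some points with the viscosity criterion at others. With these small repairs your approach yields a shorter, purely local proof with no Dirichlet problem and no cylinder maximum principle, while the paper's argument is self-contained, treats attained and non-attained suprema uniformly, and is the template that carries over verbatim to the periodic and spherical settings later in the paper.
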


\begin{proof}
From \eqref{Point_conv_kernel_a_b} we have $u^*(x) \geq u_0(x)$ for all $x\in \R^d$. From Lemma \ref{lem_continuity} we observe that $u^*$ is a continuous function and hence the set $A$ is indeed open. Let $x_0 \in A$ and $\overline{B_r(x_0)} \subset A$. Let $h:\overline{B_r(x_0)} \to \R$ be the solution of the Dirichlet boundary value problem
\begin{equation*}
\left\{
\begin{array}{rl}
\Delta h = 0& \ {\rm in} \ B_r(x_0);\\
h = u^*& \ {\rm in} \ \partial B_r(x_0).
\end{array}
\right.
\end{equation*}
Note that the auxiliary function $v(x,t) = u(x,t)-h(x)$ solves the equation 
\begin{equation*}
av_{tt} - bv_t + \Delta v = 0 \ \ \  {\rm in}  \ \ \  \ B_r(x_0)\times (0,\infty)
\end{equation*}
and it is continuous in $\overline{B_r(x_0)}\times [0,\infty)$, with $v(x,0) = u_0(x) - h(x)$. Let $y_0 \in \overline{B_r(x_0)}$ be such that $M =  \max_{x \in \overline{B_r(x_0)}} v(x,0) = v(y_0,0)$. We claim that $M \leq 0$. 

\smallskip

Assume that $M > 0$. Note that $v(x,t) \leq 0$ for every $x \in \partial B_r(x_0)$ and every $t>0$. This implies that $y_0 \in B_r(x_0)$. By the maximum principle, observe that $h \geq 0$ in $\overline{B_r(x_0)}$ and let $x_1 \in \partial B_r(x_0)$ be such that $\min_{x \in \overline{B_r(x_0)}} h(x) = h(x_1)$. Given $\varepsilon >0$, from Lemma \ref{lem_boundedness_large_times} we may find a time $t_0$ such that $|u(x,t_1) - u(y,t_1)| \leq \varepsilon$ for all $x,y \in \overline{B_r(x_0)}$ and $t_1 >t_0$. In particular, for any $x \in \overline{B_r(x_0)}$, we have
\begin{equation*}
v(x,t_1) \leq v(x,t_1) - v(x_1,t_1)  = u(x,t_1) - u(x_1,t_1) - (h(x) - h(x_1)) \leq u(x,t_1) - u(x_1,t_1) \leq \varepsilon\,,
\end{equation*}
for $t_1 >t_0$. If we take $\varepsilon <M$, the maximum principle applied to the cylinder $\Gamma = \overline{B_r(x_0)} \times [0,t_1]$ with $t_1 > t_0$ gives us
$$v(y_0,t) \leq v(y_0,0) = M$$
for all $0\leq t \leq t_1$. This plainly implies that $u(y_0, t) \leq u_0(y_0)$ for all $0 \leq t \leq t_1$. Since $t_1$ is arbitrarily large, we obtain $u^*(y_0) = u_0(y_0)$, contradicting the fact that $y_0 \in A$. This proves our claim.

\smallskip

Once established that $M \leq 0$, given $\varepsilon >0$ we apply again the maximum principle to the cylinder $\Gamma = \overline{B_r(x_0)} \times [0,t_1]$ with $t_1 > t_0$ as above to get $v(x_0,t) \leq \varepsilon$ for all $0 \leq t \leq t_1$. This implies that $u(x_0, t) \leq h(x_0) + \varepsilon$ for all $0 \leq t \leq t_1$, and since $t_1$ is arbitrarily large, we find that $u^*(x_0) \leq  h(x_0) + \varepsilon$. Since $\varepsilon >0$ is arbitrarily small, we conclude that
$$u^*(x_0) \leq h(x_0) =  \frac{1}{\sigma_{d-1}}\int_{\S^{d-1}} h(x_0 +r\xi)\,{\rm d}\sigma(\xi)= \frac{1}{\sigma_{d-1}}\int_{\S^{d-1}} u^*(x_0 +r\xi)\,{\rm d}\sigma(\xi)\,,$$
by the mean value property of the harmonic function $h$. This concludes the proof.
\end{proof}

The next lemma is a general result of independent interest. We shall use it in the proof of Theorem \ref{Thm1} for the case $p=2$.

\begin{lemma}\label{lem5_Int_by_parts}
Let $f, g \in C(\R^d) \cap W^{1,2}(\R^d)$ be real-valued functions with $g$ Lipschitz. Suppose that $g \geq 0$ and that $f$ is subharmonic in the open set $J = \{x\in \R^d; \ g(x) >0\}$. Then
\begin{equation*}
\int_{\R^d} \nabla f(x)\,.\,\nabla g(x)\,\,\dx\leq 0.
\end{equation*}
\end{lemma}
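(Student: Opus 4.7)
\medskip

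\noindent\textbf{Proof plan.} The formal heuristic is integration by parts: subharmonicity of $f$ in $J$ should give $\Delta f \geq 0$ there (in a weak/measure sense), so $\int \nabla f\cdot\nabla g\,\dx = -\int (\Delta f)\,g\,\dx \leq 0$ because $g\geq 0$ and $g$ vanishes outside $J$. To execute this rigorously with $f \in W^{1,2}$, I will produce a three-layer approximation: truncate $g$ in value, cut off in space, and mollify $f$.

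\medskip

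\noindent\textbf{The approximations.} For $\delta>0$ put $g_\delta(x) := \max\{g(x)-\delta,\,0\}$. Then $g_\delta$ is Lipschitz, nonnegative, compactly supported in $J$ at positive distance from $\partial J$: indeed, since $g$ is $L$-Lipschitz and vanishes on $J^c$, one has $\mathrm{dist}(\{g\geq\delta\},\{g=0\})\geq \delta/L$. Let $\phi_R\in C_c^\infty(\R^d)$ be a standard cutoff with $\phi_R\equiv 1$ on $B_R$, $\phi_R\equiv 0$ off $B_{2R}$, and $|\nabla \phi_R|\leq C/R$, and set $g_{\delta,R}:=g_\delta\,\phi_R$. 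Then $g_{\delta,R}$ is Lipschitz with compact support inside $\{g\geq\delta\}\cap B_{2R}\subset J$. Next, let $\eta_\epsilon$ be a radial nonnegative smooth mollifier supported in $B_\epsilon$ and put $f_\epsilon := f * \eta_\epsilon \in C^\infty(\R^d)$. The mean value inequality for $f$ on balls inside $J$ transfers to $f_\epsilon$ on the set $J_\epsilon := \{x:\overline{B_\epsilon(x)}\subset J\}$; since $f_\epsilon$ is smooth, subharmonicity on $J_\epsilon$ is equivalent to $\Delta f_\epsilon\geq 0$ there. For $\epsilon<\delta/(2L)$ we therefore have $\Delta f_\epsilon\geq 0$ on an open neighborhood of $\mathrm{supp}(g_{\delta,R})$.

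\medskip

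\noindent\textbf{Integration by parts and the three limits.} Since $g_{\delta,R}$ is Lipschitz with compact support and $f_\epsilon$ is smooth, the divergence theorem yields (with no boundary contribution)
\begin{equation*}
\int_{\R^d}\nabla f_\epsilon\cdot \nabla g_{\delta,R}\,\dx \;=\; -\int_{\R^d} (\Delta f_\epsilon)\,g_{\delta,R}\,\dx \;\leq\; 0,
\end{equation*}
using $g_{\delta,R}\geq 0$ and the sign of $\Delta f_\epsilon$ on its support. Letting $\epsilon\to 0^+$, the $W^{1,2}$ convergence $\nabla f_\epsilon\to\nabla f$ in $L^2$ together with $\nabla g_{\delta,R}\in L^2$ gives $\int\nabla f\cdot\nabla g_{\delta,R}\,\dx\leq 0$. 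Next, expand
\begin{equation*}
\int\nabla f\cdot \nabla g_{\delta,R}\,\dx \;=\; \int \phi_R\,(\nabla f\cdot\nabla g)\,\chi_{\{g>\delta\}}\,\dx \;+\; \int g_\delta\,\nabla f\cdot \nabla \phi_R\,\dx.
\end{equation*}
The second term is bounded by $(C/R)\|g\|_2\|\nabla f\|_2\to 0$ as $R\to\infty$, while dominated convergence takes the first term to $\int(\nabla f\cdot\nabla g)\,\chi_{\{g>\delta\}}\,\dx$. Finally, let $\delta\to 0^+$; using $\nabla g=0$ a.e.\ on $\{g=0\}$ for Sobolev functions and dominated convergence with the integrable envelope $|\nabla f\cdot\nabla g|$, the last integral tends to $\int\nabla f\cdot\nabla g\,\dx$, which is therefore $\leq 0$.

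\medskip

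\noindent\textbf{Main obstacles.} The only nontrivial points are (a) verifying that mollification preserves subharmonicity on the shrunk set $J_\epsilon$, for which the Lipschitz bound on $g$ is essential to guarantee $\mathrm{supp}(g_{\delta,R})\subset J_\epsilon$ for small $\epsilon$, and (b) keeping track that all boundary contributions vanish, which is arranged by the combined use of the value truncation $g_\delta$ and the spatial cutoff $\phi_R$. Everything else is standard dominated-convergence bookkeeping.
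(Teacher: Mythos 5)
Your argument is correct and is essentially the approach the paper takes: it cites \cite[Lemma 9]{CS} and notes that the formal identity $\int \nabla f\cdot\nabla g = -\int(\Delta f)g$ must be justified by approximation with smoother functions, which is exactly your mollification of $f$ plus the truncation $g_\delta=(g-\delta)_+$ (the same scheme the paper carries out in detail in its spherical analogue, Lemma \ref{integ_partes_sphere}). The only blemish is your passing claim that $g_\delta$ is compactly supported, which your stated justification does not prove; it is harmless since your cutoff $\phi_R$ makes the argument independent of it (and in fact it is true, as a Lipschitz function in $L^2$ has $\{g\geq\delta\}$ bounded).
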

 t
\begin{proof}
This is \cite[Lemma 9]{CS}. To say a few words about this proof, an integration by parts at a formal level
$$\int_{\R^d} \nabla f\,.\,\nabla g\,\,\dx = \int_{\R^d} (-\Delta f) \, g\, \dx$$
would imply the result. However, in principle, $\Delta f$ is not a well-defined function, and one must be a bit careful and argue via approximation by smoother functions.
\end{proof}

\begin{lemma}[Reduction to the Lipschitz case]\label{Red_Lip_case}
In order to prove parts {\rm (i)}, {\rm (iii)} and {\rm (iv)} of Theorem \ref{Thm1} it suffices to assume that the initial datum $u_0:\R^d \to \R^+$ is bounded and Lipschitz. 
\end{lemma}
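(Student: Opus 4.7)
The plan is to mollify $u_0$ by the kernel $\varphi_{a,b}$ itself, invoke the (assumed) Lipschitz case of Theorem \ref{Thm1} for the mollifications, and pass to the limit. For $\epsilon>0$ define $u_0^{\epsilon}:=\varphi_{a,b}(\cdot,\epsilon)*u_0$. Since $\widehat{\varphi}_{a,b}(\xi,s)\,\widehat{\varphi}_{a,b}(\xi,t)=\widehat{\varphi}_{a,b}(\xi,s+t)$ the kernel forms a semigroup, so the maximal function associated with $u_0^\epsilon$ is
\[
u^{*,\epsilon}(x)\;=\;\sup_{t>0}\varphi_{a,b}(\cdot,t)*u_0^{\epsilon}(x)\;=\;\sup_{s>\epsilon}u(x,s),
\]
which increases monotonically to $u^{*}(x)$ for every $x\in\R^d$ as $\epsilon\to 0^{+}$.

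Next I would verify that $u_0^{\epsilon}$ is bounded and Lipschitz in each of the relevant cases. Using that $\varphi_{a,b}(\cdot,\epsilon)$ and $\nabla\varphi_{a,b}(\cdot,\epsilon)$ belong to every $L^{r}(\R^d)$, Young's inequality furnishes $\|u_0^{\epsilon}\|_{\infty},\|\nabla u_0^{\epsilon}\|_{\infty}<\infty$ whenever $u_0\in L^{q}(\R^d)$ for some $1\le q<\infty$. This covers (i) with $p<\infty$, (iv) with $p=2$, and (iii) (where $\mathrm{BV}$ functions on $\R$ are bounded, so one may take $q=1$). The identity $\nabla u_0^{\epsilon}=\varphi_{a,b}(\cdot,\epsilon)*\nabla u_0$ combined with $\|\varphi_{a,b}(\cdot,\epsilon)\|_{1}=1$ then yields, via Young's inequality,
\[
\|\nabla u_0^{\epsilon}\|_{L^p(\R^d)}\leq\|\nabla u_0\|_{L^p(\R^d)},\qquad V(u_0^{\epsilon})\leq V(u_0).
\]

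Applying the Lipschitz case of Theorem \ref{Thm1} to $u_0^{\epsilon}$ gives $\|\nabla u^{*,\epsilon}\|_{L^p}\leq\|\nabla u_0\|_{L^p}$ (respectively $V(u^{*,\epsilon})\leq V(u_0)$). Since $u^{*,\epsilon}\to u^{*}$ pointwise and $\{u^{*,\epsilon}\}$ is uniformly bounded in $W^{1,p}(\R^d)$ (respectively in $\mathrm{BV}(\R)$), standard weak-$\ast$ compactness for $1<p\leq\infty$, or lower semicontinuity of the total variation, transfers the estimate to $u^{*}$. For $p=\infty$ in (i) and (iv) no reduction is required: since $\varphi_{a,b}(\cdot,t)$ is a probability density, each convolution $u(\cdot,t)$ is Lipschitz with constant at most $\|\nabla u_0\|_{\infty}$, and the pointwise supremum of such a family inherits the same bound, yielding $\|\nabla u^{*}\|_{\infty}\leq\|\nabla u_0\|_{\infty}$ directly.

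The main technical obstacle is the final limiting step: one must show that the weak-$\ast$ limit of $\nabla u^{*,\epsilon}$ coincides with the distributional gradient of $u^{*}$ (so in particular $u^{*}$ lies in the target Sobolev or $\mathrm{BV}$ space). I would do this by using the pointwise convergence $u^{*,\epsilon}\to u^{*}$ to identify the limit in $\mathcal{D}'(\R^d)$, and Banach--Alaoglu applied to the uniformly bounded sequence $\{\nabla u^{*,\epsilon}\}$ to extract the weakly convergent subsequence whose limit is then forced to equal $\nabla u^{*}$ and carries the desired norm bound by weak (or variation) lower semicontinuity.
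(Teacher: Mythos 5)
Your proposal is essentially the paper's own argument: mollify with $\varphi_{a,b}(\cdot,\varepsilon)$, use the semigroup identity to write $u^{*,\varepsilon}(x)=\sup_{t>\varepsilon}u(x,t)\to u^*(x)$ pointwise, control $\|\nabla u_0^{\varepsilon}\|_p$ and $V(u_0^{\varepsilon})$ by Young's inequality, and conclude by weak compactness in $W^{1,p}$ (using the a priori $L^p$ bound $\|u^{*,\varepsilon}\|_p\lesssim\|u_0\|_p$) together with lower semicontinuity of the norm, respectively of the variation via partition sums. Only cosmetic remark: for part (iii) a BV function on $\R$ is bounded but need not lie in $L^1$, so the boundedness and Lipschitz continuity of $u_0^{\varepsilon}$ should be justified from $u_0\in L^\infty$ and $\varphi_{a,b}(\cdot,\varepsilon),\nabla\varphi_{a,b}(\cdot,\varepsilon)\in L^1$, not by ``taking $q=1$''.
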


\begin{proof} Parts (i) and (iv). For the case $p=\infty$, recall that any function $u_0 \in W^{1,\infty}(\R^d)$ can be modified in a set of measure zero to become bounded and Lipschitz continuous. 

\smallskip

If $1 < p < \infty$, for $\varepsilon >0$ we write $u_{\varepsilon} = \varphi_{a,b}(\cdot, \varepsilon)*u_0$. It is clear that $u_{\varepsilon}$ is bounded, Lipschitz continuous and belongs to $W^{1,p}(\R^d)$. Assuming that the result holds for such $u_{\varepsilon}$, we would have $u_{\varepsilon}^* \in W^{1,p}(\R^d)$ with
\begin{equation}\label{Red_pf_eq_1}
\|\nabla u_{\varepsilon}^*\|_p \leq \|\nabla u_{\varepsilon}\|_p.
\end{equation}
Note that 
\begin{equation}\label{Red_pf_eq_sem_pro}
u_{\varepsilon}^*(x) = \sup_{t >0} \varphi_{a,b}(\cdot, t)*u_{\varepsilon}(x)  = \sup_{t >\varepsilon } \varphi_{a,b}(\cdot, t)*u_0(x),
\end{equation}
due to the semigroup property \eqref{Intro_Def_phi_hat_a_b}. Recall that there exists a universal $C >1$ such that 
\begin{equation}\label{Red_pf_eq_2}
\|u_{\varepsilon}^*\|_p \leq C\,\|u_{\varepsilon}\|_p.
\end{equation}
From Young's inequality (and also Minkowski's inequality in the case of the gradients) we have 
\begin{equation}\label{Red_pf_eq_3}
\|u_{\varepsilon} \|_p \leq \|u_0\|_p\ \ \ \ {\rm and} \ \ \ \  \|\nabla u_{\varepsilon} \|_p \leq \|\nabla u_0\|_p.
\end{equation}
From \eqref{Red_pf_eq_1}, \eqref{Red_pf_eq_2} and \eqref{Red_pf_eq_3} we see that $u_{\varepsilon}^*$ is uniformly bounded in $W^{1,p}(\R^d)$. From \eqref{Red_pf_eq_sem_pro} we have $u_{\varepsilon}^* \to u^*$ pointwise as $\varepsilon \to 0$. Hence, by the weak compactness of the space $W^{1,p}(\R^d)$, we must have $u^* \in W^{1,p}(\R^d)$ and $u_{\varepsilon}^* \rightharpoonup u^*$ as $\varepsilon \to 0$. It then follows from the lower semicontinuity of the norm under weak limits, \eqref{Red_pf_eq_1} and \eqref{Red_pf_eq_3} that
\begin{equation*}
\|\nabla u^*\|_p \leq \liminf_{\varepsilon \to 0} \|\nabla u_{\varepsilon}^*\|_p  \leq  \liminf_{\varepsilon \to 0} \|\nabla u_{\varepsilon}\|_p \leq  \|\nabla u_0\|_p.
\end{equation*}

\smallskip

\noindent Part (iii). Let $u_0:\R \to \R^+$ be of bounded variation. For $\varepsilon >0$ write $u_{\varepsilon} = \varphi_{a,b}(\cdot, \varepsilon)*u_0$. Then $u_{\varepsilon} \in C^{\infty}(\R)$ is bounded and Lipschitz continuous, and it is easy to see that $V(u_{\varepsilon}) \leq V(u_0)$. Assume that the result holds for such $u_{\varepsilon}$, i.e. that $V(u_{\varepsilon}^*) \leq  V(u_{\varepsilon})$. For any partition $\mc{P} = \{x_0 < x_1 < \ldots < x_N\}$ we then have
\begin{align}\label{Prep_limit_var_1}
V_{\mc{P}}(u_{\varepsilon}^*) := \sum_{n=1}^{N} |u_{\varepsilon}^*(x_n) - u_{\varepsilon}^*(x_{n-1})| \leq V(u_{\varepsilon}) \leq V(u_0).
\end{align}
By \eqref{Red_pf_eq_sem_pro}, we recall that $u_{\varepsilon}^* \to u^*$ pointwise as $\varepsilon \to 0$. Passing this limit in \eqref{Prep_limit_var_1} yields
\begin{align*}
V_{\mc{P}}(u^*) := \sum_{n=1}^{N} |u^*(x_n) - u^*(x_{n-1})| \leq V(u_0).
\end{align*}
Since this holds for any partition $\mc{P}$, we conclude that $V(u^*) \leq V(u_0)$. This completes the proof.
\end{proof}

The next lemma will be used in the proof of part (i) of Theorem \ref{Thm1}.

\begin{lemma}\label{Lem7_Renan}
Let $[\alpha, \beta]$ be a compact interval. Let $f,g: [\alpha, \beta] \to \R$ be absolutely continuous functions with $g$ convex. If $f(\alpha) = g(\alpha)$, $f(\beta) = g(\beta)$ and $f(x) < g(x)$ for all $x \in (\alpha, \beta)$, then 
\begin{equation}\label{Renan_eq0}
\|g'\|_{L^p([\alpha,\beta])} \leq \|f'\|_{L^p([\alpha,\beta])}
\end{equation}
for any $1 \leq p \leq \infty$.
\end{lemma}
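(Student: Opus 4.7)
My plan is to exhibit $g$ as the minimizer of $h \mapsto \|h'\|_{L^p([\alpha,\beta])}$ over the admissible class $\mathcal{A} = \{h \text{ absolutely continuous on }[\alpha,\beta]: h(\alpha)=g(\alpha),\, h(\beta)=g(\beta),\, h \leq g\}$. The hypothesis of the lemma puts $f \in \mathcal{A}$, so once the minimization is established, \eqref{Renan_eq0} follows. The key structural input is that, because $g$ is convex, $g'$ is non-decreasing (hence of bounded variation on $[\alpha,\beta]$), which enables a boundary-term-free integration by parts against the nonnegative function $g - f$.

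For the main case $1 < p < \infty$, I would set $h_t := g + t(f - g)$ for $t \in [0,1]$, note that $h_t \in \mathcal{A}$, and study the convex function $F(t) = \|h_t'\|_{L^p([\alpha,\beta])}^p$. Its right derivative at the origin is
\begin{equation*}
F'(0^+) = p \int_\alpha^\beta \psi(x)\, (f'-g')(x) \, \dx, \qquad \psi(x) := \sgn(g'(x))\,|g'(x)|^{p-1}.
\end{equation*}
Since $g'$ is non-decreasing and $s \mapsto \sgn(s)|s|^{p-1}$ is non-decreasing for $p > 1$, the composition $\psi$ is non-decreasing, so $d\psi$ is a nonnegative Borel measure on $[\alpha,\beta]$. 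Lebesgue--Stieltjes integration by parts, together with the vanishing boundary conditions $(f-g)(\alpha) = (f-g)(\beta) = 0$, then gives
\begin{equation*}
F'(0^+) = -p\int_\alpha^\beta (f-g)\, d\psi \;=\; p \int_\alpha^\beta (g-f)\, d\psi \;\geq\; 0.
\end{equation*}
The convexity of $F$ forces $F(1) \geq F(0)$, which is exactly $\|f'\|_{L^p} \geq \|g'\|_{L^p}$.

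The endpoint cases $p=1$ and $p=\infty$ lie outside this differentiable framework and I would handle them directly. For $p=1$, let $x_0 \in [\alpha,\beta]$ be a point at which $g$ attains its minimum (so $g' \leq 0$ on $[\alpha,x_0]$ and $g' \geq 0$ on $[x_0,\beta]$, by monotonicity of $g'$); then $\|g'\|_1 = g(\alpha) + g(\beta) - 2g(x_0) \leq g(\alpha) + g(\beta) - 2f(x_0) = f(\alpha) + f(\beta) - 2f(x_0) \leq \|f'\|_1$, where the final step is the triangle inequality applied to the total variation of $f$ on each of the two subintervals. For $p = \infty$, I would argue by contradiction: setting $K := \|f'\|_\infty$, if $g'(x_*) > K$ at some $x_* \in (\alpha,\beta)$ where $g'$ exists, then monotonicity of $g'$ gives $g(\beta) - g(x_*) > K(\beta - x_*)$, whereas the Lipschitz bound gives $|f(\beta) - f(x_*)| \leq K(\beta - x_*)$; subtracting forces $(g-f)(\beta) > (g-f)(x_*) \geq 0$, contradicting $(g-f)(\beta) = 0$. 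The case $g'(x_*) < -K$ is symmetric and contradicts $(g-f)(\alpha) = 0$. The principal technicality I anticipate is justifying the Lebesgue--Stieltjes integration by parts cleanly when $g'$ is merely of bounded variation rather than absolutely continuous, but this falls under the standard theory of BV-AC pairings.
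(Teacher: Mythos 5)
Your $p=1$ and $p=\infty$ arguments are correct (and more direct than the paper's, which obtains $p=\infty$ by letting $p\to\infty$ in the finite-$p$ inequality), and your core idea for $1<p<\infty$ — first variation of $t\mapsto\|g'+t(f'-g')\|_p^p$ plus a boundary-term-free Stieltjes integration by parts against the nonnegative measure $d\psi$, $\psi=\sgn(g')|g'|^{p-1}$ — is a genuinely different route from the paper's (which iteratively replaces $f$ by $\max\{f,L_{n+1}\}$ with $L_n$ tangent lines of $g$, controls each step by Jensen's inequality on the components where the tangent wins, and concludes by Fatou). However, as written the main case has a real gap: you tacitly assume $g'\in L^p([\alpha,\beta])$ (equivalently $\psi\in L^{p'}$), which is part of what must be proved. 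An absolutely continuous convex $g$ can have $g'$ blowing up at the endpoints (e.g. $g(x)=-\sqrt{1-x^2}$ on $[-1,1]$ has $g'\notin L^2$), and the hypotheses do not exclude $\|g'\|_p=\infty$ a priori; in that case $F(0)=+\infty$, $F'(0^+)$ is undefined, and the inference ``$F$ convex, $F'(0^+)\ge 0\Rightarrow F(1)\ge F(0)$'' gives nothing — yet that is exactly the regime in which the lemma has content (it asserts this cannot happen when $\|f'\|_p<\infty$). Moreover, even when $g'\in L^p$, the integration by parts is not merely the standard BV--AC pairing you invoke: $\psi$ is monotone but possibly unbounded on $[\alpha,\beta]$, so $d\psi$ may have infinite mass near the endpoints; one must work on $[\alpha+\epsilon,\beta-\epsilon]$ and show the boundary terms vanish, e.g. via $|\psi(\beta-\epsilon)|\,|(f-g)(\beta-\epsilon)|\le\bigl(|g'(\beta-\epsilon)|^p\,\epsilon\bigr)^{1/p'}\bigl(\|f'\|_{L^p(\beta-\epsilon,\beta)}+\|g'\|_{L^p(\beta-\epsilon,\beta)}\bigr)$ together with $|g'(\beta-\epsilon)|^p\,\epsilon\le\int_{\beta-\epsilon}^{\beta}|g'|^p\to 0$ by monotonicity of $g'$ — again using the very $L^p$ bound that is missing.

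The gap is fixable inside your framework by truncating the weight rather than differentiating $F$: take $\psi_M:=\sgn(g')\min(|g'|,M)^{p-1}$, which is bounded and non-decreasing, so the integration by parts on all of $[\alpha,\beta]$ is immediate and yields $\int_\alpha^\beta\psi_M f'\,\dx\ \ge\ \int_\alpha^\beta\psi_M g'\,\dx\ \ge\ \int_\alpha^\beta\min(|g'|,M)^p\,\dx$; H\"older's inequality on the left gives $\|\min(|g'|,M)\|_p^p\le\|\min(|g'|,M)\|_p^{p-1}\,\|f'\|_p$, and letting $M\to\infty$ by monotone convergence proves the lemma with no a priori integrability of $g'$. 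By contrast, the paper's tangent-line/Jensen/Fatou argument never needs such finiteness (Fatou absorbs it) and treats all $1\le p<\infty$ uniformly; your variational argument, once repaired as above, is arguably shorter, but the endpoint cases must still be handled separately as you did.
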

\begin{proof}
Let us consider the case $1 \leq p < \infty$. The case $p=\infty$ follows by a passage to the limit in \eqref{Renan_eq0}. Assume that the right-hand side of \eqref{Renan_eq0} is finite, otherwise there is nothing to prove. Let $X \subset (\alpha,\beta)$ be the set of points where $g$ is differentiable and choose a sequence $\{x_n\}_{n=1}^{\infty}$ of elements of $X$ that is dense in $(\alpha,\beta)$. For each $x_n$ consider the affine function $L_n(x) := g(x_n) + g'(x_n)(x - x_n)$. Note that $L_n(x) \leq g(x)$ for all $x \in [\alpha,\beta]$. We set $f_0 = f$ and define inductively $f_{n+1} = \max\{f_n, L_{n+1}\}$. It is clear that each $f_n$ is absolutely continuous. Let $U_n = \{ x \in (\alpha,\beta);\,  L_{n+1}(x) > f_n(x)\}$. Then
\begin{equation}\label{Renan_eq1}
\int_{[\alpha,\beta]}|f_{n+1}'(x)|^p\,\dx = \int_{[\alpha,\beta]\setminus U_n}|f_{n}'(x)|^p\,\dx + m(U_n) \,|g'(x_{n+1})|^p.
\end{equation}
By Jensen's inequality, in each connected component $I = (r,s)$ of $U_n$ we have 
\begin{align}\label{Renan_eq2}
\int_I |f_n'(x)|^p\,\dx \geq (s-r) \left( \frac{1}{s-r} \int_I  |f_n'(x)|\,\dx \right)^p \geq (s-r) \left| \frac{f_n(s) - f_n(r)}{s-r}\right|^p = (s-r) \, |g'(x_{n+1})|^p.
\end{align}
By \eqref{Renan_eq1} and \eqref{Renan_eq2} we conclude that 
\begin{equation}\label{Renan_eq3}
\|f_{n+1}'\|_{L^p([\alpha,\beta])} \leq \|f_{n}'\|_{L^p([\alpha,\beta])}.
\end{equation}

\smallskip

Let $x \in X$. For sufficiently large $N$, there are indices $j,k \in \{1,2,\ldots, N\}$ such that $x_j \leq x < x_k$. Take these indices such that $x_j$ is as large as possible and $x_k$ is as small as possible. Since $f(x) < g(x)$, for large values of $N$ we have $f(x) < L_{j}(x)$ and $f(x) < L_k(x)$. Therefore $f_N(x) = \max\{f(x), L_1(x), \ldots, L_N(x)\}$ is either equal to $L_j(x)$ or $L_k(x)$. In fact, the function $f_N$ is differentiable in $x$ with $f_N'(x) = g'(x_j)$ or $f_N'(x) = g'(x_k)$, except in the case where $g'(x_j) \neq g'(x_k)$ and $L_j(x) = L_k(x)$, which only happens in a countable set of points $Y$. Assuming that $x \notin Y$ and that $g': X \to \R$ is continuous at $x$ (this is a set of full measure in $(\alpha,\beta)$) we have $f_N'(x) \to g'(x)$ as $N\to \infty$. From \eqref{Renan_eq3} and Fatou's lemma we get
\begin{equation*}
\|g'\|_{L^p([\alpha,\beta])} \leq \liminf_{N\to \infty} \|f_N'\|_{L^p([\alpha,\beta])} \leq \|f'\|_{L^p([\alpha,\beta])}.
\end{equation*}
\end{proof}
 
\noindent {\sc Remark}: If $f,g: [\alpha, \infty) \to \R$ are absolutely continuous functions with $g$ convex, and $f(\alpha) = g(\alpha) \geq 0$, $\lim_{x\to \infty}f(x) = \lim_{x \to \infty} g(x) = 0$ and $f(x) < g(x)$ for all $x \in (\alpha, \infty)$, the same proof of Lemma \ref{Lem7_Renan} gives
\begin{equation*}
\|g'\|_{L^p([\alpha,\infty))} \leq \|f'\|_{L^p([\alpha,\infty))}
\end{equation*}
for any $1 \leq p < \infty$. Observe in \eqref{Renan_eq1} that either $g'(x_{n+1}) = 0$ or $U_n$ is bounded. The same remark applies to the analogous situation on the interval $(-\infty, \beta]$.

\subsection{Proof of Theorem \ref{Thm1}} We are now in position to prove the main result of this section. 

\subsubsection{Proof of part {\rm (i)}} We defer the case $p=\infty$ to part (iv). Let us consider here the case $1 < p < \infty$. From Lemma \ref{Red_Lip_case} we may assume that $u_0 \in L^p(\R)$ is bounded and Lipschitz continuous. Then, from Lemma \ref{lem_continuity}, we find that $u^*$ is Lipschitz continuous and the detachment set $A = \{ x \in \R; \ u^*(x) > u_0(x)\}$ is open. Let us write $A$ as a countable union of open intervals 
\begin{equation}\label{writing_A}
A = \bigcup_j I_j = \bigcup_j \,(\alpha_j, \beta_j).
\end{equation} 
We allow the possibility of having $\alpha_j = -\infty$ or $\beta_j = \infty$, but note that, if $u_0 \not\equiv 0$, we must have $u^*(x_0) = u_0(x_0)$ at a global maximum $x_0$ of $u_0$, hence $A \neq (-\infty, \infty)$. From Lemma \ref{lem_subharmonicity}, $u^*$ is subharmonic (hence convex) in each subinterval $I_j = (\alpha_j, \beta_j)$. Part (i) now follows from Lemma \ref{Lem7_Renan} (and the remark thereafter, since $u_0, u^* \in L^p(\R)$).

\subsubsection{Proof of part {\rm (ii)}} Recall that a function $u_0 \in W^{1,1}(\R)$ can be modified in a set of measure zero to become absolutely continuous. Then, from Lemma \ref{lem_continuity} we find that $u^*$ is continuous and the detachment set $A = \{ x \in \R; \ u^*(x) > u_0(x)\}$ is open. Let us decompose $A$ as in \eqref{writing_A}. From Lemma \ref{lem_subharmonicity}, $u^*$ is subharmonic (hence convex) in each subinterval $I_j = (\alpha_j, \beta_j)$. Hence $u^*$ is differentiable a.e. in $A$, with derivative denoted by $v$. It then follows from Lemma \ref{Lem7_Renan} (and the remark thereafter, since $u^* \in L^1_{weak}(\R)$) that for each interval $I_j$ we have
\begin{equation}\label{pf_thm1_bv_w-1-1_case}
\int_{I_j} |v(x)|\,\dx \leq \int_{I_j} |u_0'(x)|\,\dx\,,
\end{equation}
and since $u_0' \in L^1(\R)$ we find that $v \in L^1(A)$. 

\smallskip

We now claim that $u^*$ is weakly differentiable with $(u^*)' = \chi_A. v + \chi_{A^c}. u_0'$. In fact, if $\psi \in C^{\infty}_c(\R)$ we have
\begin{align*}
\int_{\R} u^*(x)\, \psi'(x)\,\dx & = \int_{A^c} u_0(x)\, \psi'(x)\,\dx + \sum_j \int_{I_j} u^*(x)\, \psi'(x)\,\dx\\
& =  \int_{A^c} u_0(x)\, \psi'(x)\,\dx + \sum_j \left( u_0(\beta_j)\psi(\beta_j) - u_0(\alpha_j)\psi(\alpha_j) -  \int_{I_j} v(x)\, \psi(x)\,\dx\right)\\
& =  \int_{A^c} u_0(x)\, \psi'(x)\,\dx + \sum_j \left( \int_{I_j} u_0(x)\, \psi'(x)\,\dx + \int_{I_j} u_0'(x)\, \psi(x)\,\dx - \int_{I_j} v(x)\, \psi(x)\,\dx\right)\\
& = -  \int_{A^c} u_0'(x)\, \psi(x)\,\dx -  \int_{A} v(x)\, \psi(x)\,\dx,
\end{align*}
as claimed. Finally, using \eqref{pf_thm1_bv_w-1-1_case} we arrive at
\begin{equation*}
\int_{\R} |(u^*)'(x)|\,\dx = \int_{A} |v(x)|\,\dx +  \int_{A^c} |u_0'(x)|\,\dx \leq \int_{\R} |u_0'(x)|\,\dx,
\end{equation*}
which concludes the proof of this part.

\subsubsection{Proof of part {\rm (iii)}} By Lemma \ref{Red_Lip_case} we may assume that $u_0:\R \to \R^+$ of bounded variation is also Lipschitz continuous. By Lemma \ref{lem_subharmonicity} the function $u^*$ is subharmonic (hence convex) in the detachment set $A = \{ x \in \R; \ u^*(x) > u_0(x)\}$. This plainly leads to $V(u^*) \leq  V(u_0)$, since the variation does not increase in each connected component of $A$. 

\subsubsection{Proof of part {\rm (iv)}} We include here the case $d=1$ as well. If $p= \infty$, a function $u_0 \in W^{1,\infty}(\R^d)$ can be modified on a set of measure zero to become Lipschitz continuous with ${\rm Lip}(u_0) \leq \|\nabla u_0\|_{\infty}$. From Lemma \ref{lem_continuity}, the function $u^*$ is also be bounded and Lipschitz continuous, with ${\rm Lip}(u^*) \leq {\rm Lip}(u_0)$, and the result follows, since in this case $u^* \in W^{1,\infty}(\R^d)$ with $\|\nabla u^*\|_{\infty} \leq {\rm Lip}(u^*)$. 
\smallskip

If $p=2$, from Lemma \ref{Red_Lip_case} it suffices to consider the case where $u_0 \in W^{1,2}(\R^d)$ is Lipschitz continuous. In this case, we have seen from the discussion in the introduction and from Lemma \ref{lem_continuity} that the maximal function $u^* \in W^{1,2}(\R^d)$ is also Lipschitz continuous. From Lemma \ref{lem_subharmonicity}, $u^*$ is subharmonic in the detachment set $A = \{x \in \R^d; \,u^*(x) > u_0(x)\}$ and we may apply Lemma \ref{lem5_Int_by_parts} with $f = u^*$ and $g = (u^* - u_0)$ to get
\begin{align*}
\|\nabla u_0\|_2^2 &= \int_{\R^d} |\nabla u_0|^2\,\dx = \int_{\R^d} |\nabla u^* - \nabla (u^* - u_0)|^2\,\dx\\
& = \int_{\R^d} |\nabla (u^*-u_0)|^2\,\dx - 2 \int_{\R^d} \nabla u^*\,.\,\nabla (u^*-u_0)\,\dx + \int_{\R^d} |\nabla u^*|^2\,\dx\\
& \geq \int_{\R^d} |\nabla u^*|^2\,\dx = \|\nabla u^*\|_2^2.
\end{align*}
This concludes the proof.

\section{Proof of Theorem \ref{Thm2}: Periodic analogues} \label{Section3}

\subsection{Auxiliary lemmas} We follow here the same strategy used in the proof of Theorem \ref{Thm1}. We may assume in what follows that the initial datum $u_0$ is nonnegative. We now have to consider the whole range $a,b \geq 0$ with $(a,b) \neq (0,0)$.

\begin{lemma}[Continuity - periodic version] \label{lem_continuity_per}
Let $a,b \geq 0$ with $(a,b) \neq (0,0)$ and $u^*$ be the maximal function defined in \eqref{max_func_per}. 
\begin{itemize}
\item[(i)] If $u_0 \in C(\T^d)$ then $u^* \in C(\T^d)$.
\item[(ii)] If $u_0$ is Lipschitz continuous then $u^*$ is Lipschitz continuous with ${\rm Lip}(u^*) \leq {\rm Lip}(u_0)$.
\end{itemize}
\end{lemma}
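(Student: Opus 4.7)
The plan is to adapt the argument of Lemma \ref{lem_continuity} to the periodic setting, exploiting two simplifications specific to the torus. First, $\T^d$ is compact, so continuous functions are automatically uniformly continuous; this removes the need to split convolution integrals into ``near'' and ``far'' pieces as was done on $\R^d$. Second, the periodic kernel $\Psi_{a,b}(\cdot, t)$ is a nonnegative smooth function with unit mass, $\int_{\T^d} \Psi_{a,b}(y, t)\,\dy = \widehat{\Psi}_{a,b}(0, t) = 1$, for every $t > 0$ and every admissible $(a,b)$. These two facts hold uniformly in each of the regimes $a > 0,\ b > 0$;\ $a > 0,\ b = 0$;\ $a = 0,\ b > 0$, so no case distinction is needed, which is in contrast to the $\R^d$ proof where one had to assume $a, b > 0$ and rely on Schoenberg's representation for the kernel.

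For part (i), I would begin from the uniform continuity of $u_0$: given $\varepsilon > 0$, choose $\delta > 0$ so that $\sup_{w \in \T^d} |u_0(w - h) - u_0(w)| < \varepsilon$ whenever $|h| < \delta$. Writing $\tau_h f(x) := f(x - h)$ and using nonnegativity and unit mass of $\Psi_{a,b}(\cdot, t)$ on $\T^d$, one obtains
$$\bigl|u(x - h, t) - u(x, t)\bigr| \le \bigl(|\tau_h u_0 - u_0| \ast \Psi_{a,b}(\cdot, t)\bigr)(x) \le \sup_{w \in \T^d}|u_0(w - h) - u_0(w)| < \varepsilon$$
for every $x \in \T^d$ and every $t > 0$. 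Sublinearity of the pointwise supremum then yields $|u^*(x - h) - u^*(x)| \le \varepsilon$, which gives the (uniform) continuity of $u^*$ on $\T^d$. The $L^{p'}$ tail estimate invoked in the proof of Lemma \ref{lem_continuity} is completely bypassed here.

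Part (ii) is essentially the same computation carried out without the $\varepsilon$ machinery. For each fixed $t > 0$ the function $u(\cdot, t) = \Psi_{a,b}(\cdot, t) \ast |u_0|$ satisfies
$$|u(x, t) - u(y, t)| \le \int_{\T^d}|u_0(x - w) - u_0(y - w)|\,\Psi_{a,b}(w, t)\,\dw \le {\rm Lip}(u_0)\,|x - y|$$
for all $x, y \in \T^d$, so each $u(\cdot, t)$ is Lipschitz with constant at most ${\rm Lip}(u_0)$ and bounded by $\|u_0\|_\infty$. Taking the pointwise supremum in $t$ preserves both properties, yielding ${\rm Lip}(u^*) \le {\rm Lip}(u_0)$.

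I do not anticipate any substantive obstacle. The only item that warrants a brief check is the unit-mass identity $\int_{\T^d}\Psi_{a,b}(y, t)\,\dy = 1$, which follows either from the Fourier series expansion of $\Psi_{a,b}$ together with $\widehat{\Psi}_{a,b}(0, t) = 1$, or from the Poisson summation identity $\Psi_{a,b}(x, t) = \sum_{n \in \Z^d} \varphi_{a,b}(x + n, t)$ recalled in the introduction, combined with $\int_{\R^d}\varphi_{a,b}(x, t)\,\dx = 1$.
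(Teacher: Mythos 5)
Your argument is correct and follows essentially the same route as the paper: uniform continuity of $u_0$ on the compact torus combined with the nonnegativity and unit mass of $\Psi_{a,b}(\cdot,t)$ and sublinearity of the supremum for part (i), and the fact that each $u(\cdot,t)$ is Lipschitz with constant at most ${\rm Lip}(u_0)$ for part (ii). Your added verification of $\int_{\T^d}\Psi_{a,b}(y,t)\,\dy=1$ via $\widehat{\Psi}_{a,b}(0,t)=1$ is a correct, if implicit in the paper, detail.
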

\begin{proof}
Part (i). If $u_0 \in C(\T^d)$ then $u_0$ is uniformly continuous in $\T^d$. Therefore, given $\varepsilon >0$, there exists $\delta$ such that $|u_0(x-h) - u_0 (x)| \leq \varepsilon$ whenever $|h| \leq \delta$. It follows that (recall that $\tau_hu_0:= u_0(x-h)$)
\begin{align*}
\begin{split}
& |\tau_h u_0 - u_0|*\Psi_{a,b}(\cdot,t)(x) = \int_{\T^d} |\tau_h u_0 - u_0|(x-y)\,\Psi_{a,b}(y,t)\,\dy < \varepsilon
\end{split}
\end{align*}
if $|h| \leq \delta$, for every $t >0$. Using the sublinearity, we then arrive at 
\begin{equation*}
\big|\tau_h u^*(x) - u^*(x)\big| \leq (\tau_h u_0 - u_0)^*(x) \leq \varepsilon
\end{equation*}
for $|h| < \delta$, which shows that $u^*$ is continuous at the point $x$.

\smallskip

\noindent Part (ii). It follows since ${\rm Lip}(u(\cdot,t)) \leq {\rm Lip}(u_0)$ for each $t>0$.
\end{proof}

\begin{lemma}[Behaviour at large times - periodic version]\label{lem_boundedness_large_times_periodic} Let $a,b \geq 0$ with $(a,b) \neq (0,0)$ and $u(x,t) = \Psi_{a,b}(\cdot, t)*u_0(x)$. If $u_0:\T^d \to \R^+$ is bounded and if $r>0$ and $\varepsilon >0$ are given, then there exists a time $t_{r,\varepsilon} <\infty$ such that $ |u(x, t) -  u(y,t)| < \varepsilon$ for all $x,y \in B_r$  and $t > t_{r,\varepsilon}$.
\end{lemma}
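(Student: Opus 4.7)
The plan is to establish that $u(\cdot,t)$ converges uniformly on $\T^d$ to the constant $\overline{u_0}:=\int_{\T^d}u_0(z)\,\dz$ as $t\to\infty$; the claimed equicontinuity on $B_r$ (indeed on all of $\T^d$) then follows from the triangle inequality. Note that this approach circumvents the gradient estimates used in the Euclidean Lemma \ref{lem_boundedness_large_times}(ii), replacing them by the exponential decay of the nonzero Fourier modes that is characteristic of the compact setting.

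First I would expand $u(\cdot,t)$ as a Fourier series. Since $u_0\in L^\infty(\T^d)\subset L^2(\T^d)$, the expression
\[
u(x,t) \;=\; \sum_{n\in\Z^d}\widehat{\Psi}_{a,b}(n,t)\,\widehat{u_0}(n)\,e^{2\pi i x\cdot n}
\]
converges absolutely and uniformly in $x$ (for each fixed $t>0$), thanks to the rapid decay of $\widehat{\Psi}_{a,b}(n,t)$ in $|n|$ and the uniform bound $|\widehat{u_0}(n)|\leq \|u_0\|_\infty$. Because $\widehat{\Psi}_{a,b}(0,t)=1$ for every $t>0$, the $n=0$ term equals $\overline{u_0}$, so
\[
u(x,t)-\overline{u_0}\;=\;\sum_{n\in\Z^d\setminus\{0\}}\widehat{\Psi}_{a,b}(n,t)\,\widehat{u_0}(n)\,e^{2\pi i x\cdot n}.
\]

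Set $\lambda_n:=(-b+\sqrt{b^2+16a\pi^2|n|^2})/(2a)$ if $a>0$, and $\lambda_n:=(2\pi|n|)^2/b$ if $a=0$ and $b>0$, so that $\widehat{\Psi}_{a,b}(n,t)=e^{-t\lambda_n}$. The map $n\mapsto\lambda_n$ is nondecreasing in $|n|$, and in all cases $\lambda_\star:=\min_{n\neq 0}\lambda_n>0$. Using the semigroup identity $\widehat{\Psi}_{a,b}(n,t)=\widehat{\Psi}_{a,b}(n,t-1)\,\widehat{\Psi}_{a,b}(n,1)$ for $t\geq 1$, together with $\widehat{\Psi}_{a,b}(n,t-1)\leq e^{-(t-1)\lambda_\star}$ for $n\neq 0$, we get
\[
\bigl|u(x,t)-\overline{u_0}\bigr|\;\leq\;\|u_0\|_\infty\,e^{-(t-1)\lambda_\star}\sum_{n\in\Z^d\setminus\{0\}}\widehat{\Psi}_{a,b}(n,1).
\]
The series on the right is finite, since by absolute convergence of the Fourier series of the smooth function $\Psi_{a,b}(\cdot,1)$ we have $\sum_{n\in\Z^d}\widehat{\Psi}_{a,b}(n,1)=\Psi_{a,b}(0,1)<\infty$. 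Hence the right-hand side tends to $0$ as $t\to\infty$, uniformly in $x\in\T^d$.

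Given $\varepsilon>0$, I would then choose $t_{r,\varepsilon}$ large enough that the displayed bound is at most $\varepsilon/2$ for all $t>t_{r,\varepsilon}$; the triangle inequality gives $|u(x,t)-u(y,t)|<\varepsilon$ for all $x,y\in\T^d$ (and in particular $x,y\in B_r$). There is essentially no serious obstacle here: the only subtlety is the routine justification of the termwise manipulation of the Fourier series, which is automatic from the super-polynomial decay of $\widehat{\Psi}_{a,b}(n,1)$ in $|n|$ paired with the uniform bound on $\widehat{u_0}$. In particular, the radius $r$ does not enter the argument at all, and the resulting $t_{r,\varepsilon}$ can be taken independent of $r$.
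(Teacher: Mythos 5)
Your proof is correct, but it takes a genuinely different route from the paper. The paper's argument is a one-line transfer to the Euclidean setting: by the periodization identity \eqref{Connecting_two_max_fun}, $u(x,t)=\int_{\R^d}\varphi_{a,b}(x-y,t)\,u_0(y)\,\dy$ with $u_0$ a bounded (periodic) function on $\R^d$, so the conclusion follows from Lemma \ref{lem_boundedness_large_times}\,(ii), whose proof rests on showing that ${\rm Lip}(u(\cdot,t))\leq \|u_0\|_\infty\sum_k\|\partial_{x_k}\varphi_{a,b}(\cdot,t)\|_1\to 0$ via the explicit measure \eqref{Explicit_mu_a_b_t}. You instead stay on the torus and exploit the spectral gap: since $\widehat{\Psi}_{a,b}(0,t)=1$ and $\lambda_\star=\min_{n\neq0}\lambda_n>0$, the nonzero modes decay like $e^{-(t-1)\lambda_\star}$, giving uniform (in fact exponential) convergence of $u(\cdot,t)$ to the mean $\int_{\T^d}u_0$. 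Your approach buys several things: it is self-contained (no appeal to the Euclidean kernel estimates), it yields a stronger conclusion (uniform on all of $\T^d$, with $t_{r,\varepsilon}$ independent of $r$, and identifying the limit as the mean), and it covers the full range $a,b\geq0$, $(a,b)\neq(0,0)$ in one stroke --- note that the paper's Lemma \ref{lem_boundedness_large_times} is stated only for $a,b>0$, so the endpoint cases $a=0$ or $b=0$ of the periodic lemma implicitly lean on the analogous facts for the heat and Poisson kernels, whereas your spectral-gap argument treats them uniformly. What the paper's route buys is brevity and reuse of machinery already in place; what it cannot give, and yours does, is the $r$-independent, quantitative rate. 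Your justifications of the termwise manipulations (absolute convergence of $\sum_n\widehat{\Psi}_{a,b}(n,t)$ and the bound $|\widehat{u_0}(n)|\leq\|u_0\|_\infty$) are indeed routine and suffice.
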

\begin{proof}
It follows from \eqref{Connecting_two_max_fun} and Lemma \ref{lem_boundedness_large_times} (ii). 
\end{proof}

\begin{lemma}[Subharmonicity]\label{lem_subharmonicity_per} Let $a,b \geq 0$ with $(a,b) \neq (0,0)$ and $u^*$ be the maximal function defined in \eqref{max_func_per}. If $u_0 \in C(\T^d)$ then $u^*$ is subharmonic in the open set $A = \{x \in \T^d; \,u^*(x) > u_0(x)\}$.
\end{lemma}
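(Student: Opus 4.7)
The plan is to mirror the proof of Lemma \ref{lem_subharmonicity}, with minor modifications to accommodate the periodic setting. Since $u_0 \in C(\T^d)$ is uniformly continuous and the family $\Psi_{a,b}(\cdot, t)$ is an approximation of the identity on $\T^d$ as $t \to 0^+$, I expect $u(\cdot, t) \to u_0$ uniformly, so $u^* \geq u_0$ pointwise; combined with Lemma \ref{lem_continuity_per}, this will show that $A$ is open. I would then fix $x_0 \in A$ and choose $r > 0$ with $\overline{B_r(x_0)} \subset A$ small enough that $\overline{B_r(x_0)}$ lies inside a single Euclidean coordinate chart of $\T^d$, and let $h$ solve the Dirichlet problem $\Delta h = 0$ in $B_r(x_0)$ with boundary values $u^*|_{\partial B_r(x_0)}$. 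Setting $v(x,t) = u(x,t) - h(x)$, this function will satisfy $av_{tt} - bv_t + \Delta v = 0$ on $B_r(x_0) \times (0, \infty)$ and extend continuously up to $\overline{B_r(x_0)} \times [0, \infty)$ with $v(\cdot, 0) = u_0 - h$.

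Next I would rerun the two-step maximum-principle argument. First, letting $y_0$ realize $M := \max_{\overline{B_r(x_0)}} v(\cdot, 0)$, I assume $M > 0$ for contradiction; then $y_0 \in B_r(x_0)$, since $v \leq 0$ on $\partial B_r(x_0) \times (0,\infty)$. For any $\varepsilon \in (0, M)$, Lemma \ref{lem_boundedness_large_times_periodic} yields $t_0$ so that the oscillation of $u(\cdot, t_1)$ over $\overline{B_r(x_0)}$ is at most $\varepsilon$ for $t_1 > t_0$; picking $x_1 \in \partial B_r(x_0)$ realizing the minimum of $h$, it follows that $v(x, t_1) \leq u(x, t_1) - u(x_1, t_1) \leq \varepsilon < M$ on $\overline{B_r(x_0)}$. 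The maximum principle on the cylinder $\Gamma = \overline{B_r(x_0)} \times [0, t_1]$ will then give $v(y_0, t) \leq M$ for all $t \in [0, t_1]$, hence $u(y_0, t) \leq u_0(y_0)$ for all $t \geq 0$, contradicting $y_0 \in A$. Once $M \leq 0$ is established, a second application of the maximum principle to $\Gamma$ with $t_1 > t_0$ gives $v(x_0, t) \leq \varepsilon$ uniformly in $t \in [0, t_1]$, so $u^*(x_0) \leq h(x_0) + \varepsilon$. Sending $\varepsilon \to 0$ and invoking the mean value property of the harmonic $h$ will complete the proof.

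The main technical point I expect to check is the applicability of the maximum principle on $\Gamma$ in the three regimes $(a = 0,\, b > 0)$, $(a > 0,\, b = 0)$, and $(a > 0,\, b > 0)$. The first is handled by the parabolic maximum principle, while the latter two are uniformly elliptic in the $(d+1)$-dimensional $(x,t)$ variables; in each case the maximum of $v$ over $\overline{\Gamma}$ is attained on the parabolic or topological boundary respectively, and the oscillation control from Lemma \ref{lem_boundedness_large_times_periodic} bounds $v$ on the top face by $\varepsilon < M$, so the geometry forces the maximum to occur at $(y_0, 0)$ with value $M$. Beyond this case-by-case verification, which is already implicit in the Euclidean argument, the remainder of the proof is formally identical to that of Lemma \ref{lem_subharmonicity}.
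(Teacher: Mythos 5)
Your proposal is correct and follows essentially the same route as the paper, which itself proves this lemma by noting that $u^*$ is continuous (so $A$ is open) and then repeating the argument of Lemma \ref{lem_subharmonicity} verbatim in a coordinate chart, using Lemma \ref{lem_boundedness_large_times_periodic} for the large-time oscillation bound and the parabolic maximum principle when $a=0$. Your explicit case-by-case check of the maximum principle in the three regimes is exactly the point the paper leaves implicit, so there is nothing to add.
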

\begin{proof}
Note initially that, by Lemma \ref{lem_continuity_per}, the function $u^*$ is continuous and the set $A \subset \T^d$ is indeed open. Moreover we have $A \neq \T^d$, since $u^*(x) = u_0(x)$ at a global maximum $x$ of $u_0$. The rest of the proof is similar to the proof of Lemma \ref{lem_subharmonicity}, using the maximum principle for the heat equation in the case $a=0$.
\end{proof}

\begin{lemma}\label{lem5_Int_by_parts_per}
Let $f, g \in C(\T^d) \cap W^{1,2}(\T^d)$ with $g$ Lipschitz. Suppose that $g \geq 0$ and that $f$ is subharmonic in the open set $J = \{x\in \T^d; \ g(x) >0\}$. Then
\begin{equation*}
\int_{\T^d} \nabla f(x)\,.\,\nabla g(x)\,\,\dx\leq 0.
\end{equation*}
\end{lemma}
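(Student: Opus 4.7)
The statement is the periodic analogue of Lemma \ref{lem5_Int_by_parts}, and the plan is to follow the same strategy, with the simplification afforded by the compactness of $\T^d$ (no issues at infinity and, crucially, no boundary terms when integrating by parts). The goal is to make rigorous the formal identity $\int_{\T^d} \nabla f\cdot \nabla g\,\dx = -\int_{\T^d} g\,\Delta f\,\dx$ by a double approximation, and then exploit the subharmonicity of $f$ on $J$ together with $g\geq 0$ and $\supp g\subset \overline{J}$.

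The first step is to truncate $g$ so as to push its support compactly inside $J$. For $\delta>0$, set $g_\delta:=(g-\delta)_+$, which is still Lipschitz and nonnegative, with $\supp g_\delta \subset \{g\geq \delta\}$. Since $g$ is continuous and vanishes on $\T^d\setminus J$, this sublevel set is a compact subset of $J$. Moreover $\nabla g_\delta = \chi_{\{g>\delta\}}\nabla g$ a.e., and since $\nabla g = 0$ a.e.\ on $\{g=0\}$, dominated convergence yields $\int_{\T^d}\nabla f\cdot \nabla g_\delta\,\dx \to \int_{\T^d}\nabla f\cdot \nabla g\,\dx$ as $\delta\to 0^+$.

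The second step is to mollify $f$. Let $\rho_\varepsilon$ be a standard smooth nonnegative mollifier on $\R^d$ of mass $1$ supported in $B_\varepsilon$, and set $f_\varepsilon := f*\rho_\varepsilon$, viewed as an element of $C^\infty(\T^d)$ via periodic convolution. Then $f_\varepsilon\to f$ in $W^{1,2}(\T^d)$. Since $f$ is subharmonic in $J$, we have $\Delta f\geq 0$ as a distribution on $J$, and therefore $\Delta f_\varepsilon = (\Delta f)*\rho_\varepsilon \geq 0$ pointwise on $J_\varepsilon := \{x\in J:\mathrm{dist}(x,\T^d\setminus J)>\varepsilon\}$.

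The third and final step is to integrate by parts and pass to the limit. Fix $\delta>0$ and choose $\varepsilon$ small enough that $\supp g_\delta \subset J_\varepsilon$; such $\varepsilon$ exists since $\supp g_\delta$ is a compact subset of the open set $J$. Integration by parts on the closed manifold $\T^d$ (no boundary terms) yields
\[ \int_{\T^d} \nabla f_\varepsilon\cdot \nabla g_\delta\,\dx = -\int_{\T^d} g_\delta\,\Delta f_\varepsilon\,\dx \leq 0, \]
as $g_\delta\geq 0$ vanishes outside $J_\varepsilon$, where $\Delta f_\varepsilon\geq 0$. Letting $\varepsilon\to 0^+$, the left-hand side converges to $\int_{\T^d}\nabla f\cdot \nabla g_\delta\,\dx$ by the $L^2$ convergence $\nabla f_\varepsilon\to \nabla f$ combined with $\nabla g_\delta\in L^\infty$. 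Thus $\int_{\T^d}\nabla f\cdot \nabla g_\delta\,\dx\leq 0$, and then $\delta\to 0^+$ gives the result. The only delicate point is the passage from distributional subharmonicity of $f$ on $J$ to pointwise nonnegativity of $\Delta f_\varepsilon$ on $J_\varepsilon$; compactness of $\T^d$ removes the issue at infinity that complicates the $\R^d$ version, making the present argument slightly cleaner than that of Lemma \ref{lem5_Int_by_parts}.
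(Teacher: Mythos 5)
Your argument is correct and is essentially the approach the paper intends: its proof of this lemma simply defers to \cite[Lemma 9]{CS}, i.e.\ the same double approximation (truncating $g$ to $(g-\delta)_+$ and mollifying $f$) that makes the formal integration by parts $\int \nabla f\cdot\nabla g = -\int g\,\Delta f$ rigorous, exactly as carried out in detail for the spherical analogue (Lemma \ref{integ_partes_sphere}), with rotation-averaging there replaced by ordinary periodic convolution here. The only step you invoke without proof, namely that the sub-mean value property of a continuous $f$ gives $\Delta f\geq 0$ distributionally on $J$ (equivalently, that $\Delta f_\varepsilon\geq 0$ pointwise on $J_\varepsilon$), is classical and is verified by the same averaging computation used in the paper's spherical proof, so no gap remains.
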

\begin{proof}
This follows as in \cite[Lemma 9]{CS}. We omit the details.
\end{proof}

\begin{lemma}[Reduction to the Lipschitz case - periodic version]\label{Red_Lip_case_per}
In order to prove parts {\rm (i)}, {\rm (iii)} and {\rm (iv)} of Theorem \ref{Thm2} it suffices to assume that the initial datum $u_0:\T^d \to \R^+$ is Lipschitz. 
\end{lemma}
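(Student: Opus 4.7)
The plan is to mimic the reduction in Lemma \ref{Red_Lip_case}, with $\R^d$ replaced by $\T^d$ and $\varphi_{a,b}$ replaced by $\Psi_{a,b}$, splitting the argument according to whether a Sobolev norm or the total variation is being controlled. The same Young/Minkowski/semigroup toolkit is available on the torus, and the key nonnegativity and approximate identity properties of $\Psi_{a,b}$ have already been recorded in the introduction.

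For the $W^{1,p}$ statements in parts (i) and (iv), I would first dispose of $p=\infty$ by passing to a Lipschitz representative of $u_0$ with ${\rm Lip}(u_0)\leq \|\nabla u_0\|_\infty$, for which there is nothing to do. In the remaining range ($1<p<\infty$ for (i), $p=2$ for (iv)) I would regularize in time: set $u_\varepsilon := \Psi_{a,b}(\cdot,\varepsilon)*u_0$. Since $\Psi_{a,b}(\cdot,\varepsilon)\in C^\infty(\T^d)$, the regularized datum $u_\varepsilon$ is smooth, hence Lipschitz on $\T^d$. The semigroup identity $\widehat{\Psi}_{a,b}(n,t+\varepsilon) = \widehat{\Psi}_{a,b}(n,t)\,\widehat{\Psi}_{a,b}(n,\varepsilon)$ is immediate from the definition and yields
\begin{equation*}
u_\varepsilon^*(x) \;=\; \sup_{t>0}\Psi_{a,b}(\cdot,t)*u_\varepsilon(x) \;=\; \sup_{t>\varepsilon}\Psi_{a,b}(\cdot,t)*u_0(x),
\end{equation*}
so $u_\varepsilon^*$ increases pointwise to $u^*$ as $\varepsilon\to 0^+$. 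Granting the inequality for the Lipschitz datum $u_\varepsilon$, I would combine $\|\nabla u_\varepsilon^*\|_p \leq \|\nabla u_\varepsilon\|_p$, the a priori bound $\|u_\varepsilon^*\|_p\leq C\|u_\varepsilon\|_p$ noted before Theorem \ref{Thm2}, and Young's/Minkowski's inequalities $\|u_\varepsilon\|_p\leq\|u_0\|_p$, $\|\nabla u_\varepsilon\|_p\leq\|\nabla u_0\|_p$ to exhibit $\{u_\varepsilon^*\}$ as uniformly bounded in $W^{1,p}(\T^d)$. Weak compactness (valid since $1<p<\infty$) together with the pointwise convergence forces $u^*\in W^{1,p}(\T^d)$ and $u_\varepsilon^*\rightharpoonup u^*$, and lower semicontinuity of the norm under weak convergence concludes
\begin{equation*}
\|\nabla u^*\|_p \;\leq\; \liminf_{\varepsilon\to 0^+}\|\nabla u_\varepsilon^*\|_p \;\leq\; \|\nabla u_0\|_p.
\end{equation*}

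For the bounded variation statement in part (iii), the same regularization $u_\varepsilon:=\Psi_{a,b}(\cdot,\varepsilon)*u_0$ is Lipschitz, and the standard averaging inequality against the probability density $\Psi_{a,b}(\cdot,\varepsilon)$ on $\T$ gives $V(u_\varepsilon)\leq V(u_0)$. Assuming the variation bound for Lipschitz data, I would fix any finite partition $\mc{P}=\{x_0<x_1<\cdots<x_N\}$ of $\T$ and use $V(u_\varepsilon^*)\leq V(u_\varepsilon)\leq V(u_0)$ together with the pointwise convergence $u_\varepsilon^*\to u^*$ to pass to the limit in the partition sum, concluding $\sum_{n=1}^N |u^*(x_n)-u^*(x_{n-1})|\leq V(u_0)$, and then take the supremum over partitions.

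I do not expect a genuine obstacle here; the most delicate point is ensuring that the $L^p$-boundedness constant $C$ in $\|u_\varepsilon^*\|_p\leq C\|u_\varepsilon\|_p$ is independent of $\varepsilon$, but this follows at once from the uniform pointwise bound $u_\varepsilon^*\leq Mu_\varepsilon$ (with $M$ the Hardy--Littlewood maximal operator on $\R^d$ applied via the identification in \eqref{Connecting_two_max_fun}) recorded right before the statement of Theorem \ref{Thm2}. In this way the entire reduction parallels Lemma \ref{Red_Lip_case} almost verbatim.
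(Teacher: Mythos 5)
Your proposal is correct and follows essentially the same route as the paper: the paper's proof of Lemma \ref{Red_Lip_case_per} is simply a reference to the argument of Lemma \ref{Red_Lip_case}, i.e. the time-regularization $u_\varepsilon=\Psi_{a,b}(\cdot,\varepsilon)*u_0$, the semigroup identity giving $u_\varepsilon^*\to u^*$ pointwise, Young/Minkowski bounds, weak compactness with lower semicontinuity for $1<p<\infty$, and the partition argument for the variation — exactly what you wrote out in detail.
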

\begin{proof}
This follows as in the proof of Lemma \ref{Red_Lip_case}.
\end{proof}

\subsection{Proof of Theorem \ref{Thm2}} Once we have established the lemmas of the previous subsection, together with Lemma \ref{Lem7_Renan}, the proof of Theorem \ref{Thm2} follows essentially as in the proof of Theorem \ref{Thm1}. We omit the details.

\section{Proof of Theorem \ref{Thm3}: Maximal operators on the sphere}

\subsection{Auxiliary lemmas} As before, we may assume that the initial datum $u_0$ is nonnegative.

\smallskip

In this section we denote by $B_r(\omega) \subset \S^{d}$ the geodesic ball of center $\omega$ and radius $r$, i.e. 
$$B_r(\omega) = \{ \eta \in \S^d;\,  d(\eta, \omega) = \arccos(\eta \cdot \omega) < r\}.$$
We say that a continuous function $f:\S^d\to\R$ is {\it subharmonic} in a relatively open set $A \subset \S^d$ if, for every $\omega \in A$, and every geodesic ball $\overline{B_r(\omega)} \subset A$ we have
$$f(\omega) \leq \frac{1}{\sigma(\partial B_r(\omega))}\int_{\partial B_r(\omega)} f(\eta)\,{\rm d}\sigma(\eta),$$
where $\sigma(\partial B_r(\omega))$ denotes the surface area of $\partial B_r(\omega)$, and ${\rm d}\sigma$ denotes its surface measure. Throughout this section we write
$${\rm Lip}(u) = \sup_{\substack{\omega,\eta \in \S^d \\ \omega\neq \eta}} \frac{|u(\omega) - u(\eta)|}{d(\omega, \eta)}$$
for the Lipschitz constant of a function $u:\S^d \to \R$.

\begin{lemma}[Continuity - spherical version] \label{lem_continuity_pois}
Let $u^*$ be the maximal function defined in \eqref{maxpois} or \eqref{maxheats}. 
\begin{itemize}
\item[(i)] If $u_0 \in C(\S^d)$ then $u^* \in C(\S^d)$.
\item[(ii)] If $u_0$ is Lipschitz continuous then $u^*$ is Lipschitz continuous with ${\rm Lip}(u^*) \leq {\rm Lip}(u_0)$.
\end{itemize}
\end{lemma}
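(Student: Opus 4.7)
The plan is to mirror the continuity arguments of Lemmas \ref{lem_continuity} and \ref{lem_continuity_per}, exploiting the rotational invariance of the kernels on $\S^d$. The crucial point is that both the Poisson kernel $\mc{P}(\omega, \eta, \rho)$ and the heat kernel $\mc{K}(\omega,\eta,t)$ depend only on the inner product $\omega \cdot \eta$, hence are invariant under the diagonal action of $SO(d+1)$. In particular, for any rotation $R \in SO(d+1)$ and any admissible parameter (writing $s$ for either $\rho$ or $t$ in the respective settings), we may change variables $\eta \mapsto R\eta$ to get
\begin{equation*}
u(R\omega, s) = \int_{\S^d} \mc{K}(\omega, \eta, s)\,|u_0(R\eta)|\,\d\sigma(\eta),
\end{equation*}
and analogously for $\mc{P}$. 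By sublinearity of the supremum this yields
\begin{equation*}
\big|u^*(R\omega) - u^*(\omega)\big| \leq \sup_{s} \int_{\S^d} \mc{K}(\omega,\eta,s)\,\big||u_0(R\eta)| - |u_0(\eta)|\big|\,\d\sigma(\eta).
\end{equation*}

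Given two points $\omega, \omega' \in \S^d$, I pick $R$ to be the rotation by the angle $\theta = d(\omega,\omega')$ in the two-plane spanned by $\omega$ and $\omega'$ (so that $R\omega = \omega'$). The standard fact I will use is that for this rotation, every point is displaced by at most the rotation angle, i.e.\ $d(R\eta, \eta) \leq d(\omega, \omega')$ for all $\eta \in \S^d$ (points in the rotation plane attain the bound, points on the axis are fixed). Then for part (i), uniform continuity of $u_0 \in C(\S^d)$ on the compact manifold gives a $\delta > 0$ such that $d(\omega,\omega') < \delta$ forces $||u_0(R\eta)| - |u_0(\eta)|| < \varepsilon$ uniformly in $\eta$, and using that $\mc{K}(\omega,\cdot,s)$ and $\mc{P}(\omega,\cdot,\rho)$ integrate to $1$, we conclude $|u^*(\omega) - u^*(\omega')| \leq \varepsilon$.

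For part (ii), the same setup gives $||u_0(R\eta)| - |u_0(\eta)|| \leq {\rm Lip}(u_0)\,d(R\eta,\eta) \leq {\rm Lip}(u_0)\,d(\omega,\omega')$, so that
\begin{equation*}
|u^*(\omega) - u^*(\omega')| \leq {\rm Lip}(u_0)\,d(\omega,\omega')\,\sup_{s}\int_{\S^d}\mc{K}(\omega,\eta,s)\,\d\sigma(\eta) = {\rm Lip}(u_0)\,d(\omega,\omega'),
\end{equation*}
(and identically with $\mc{P}$), which is exactly ${\rm Lip}(u^*) \leq {\rm Lip}(u_0)$.

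The only mild subtlety is the geometric inequality $d(R\eta, \eta) \leq d(\omega, \omega')$ for the planar rotation $R$; this is a routine consequence of decomposing $\eta$ into its component in the rotation plane and its component orthogonal to it, since only the planar component moves and the rotation on a circle acts as a contraction of the chord-to-angle distance. No deep input is required beyond the rotational symmetry of the kernels, their non-negativity, and the fact that they are probability densities for each fixed $s$; the statement then follows uniformly in $s$ and hence passes to the supremum. I expect no substantive obstacle here — this lemma is essentially the spherical analog of the argument already carried out on $\R^d$ and $\T^d$.
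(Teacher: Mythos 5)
Your proposal is correct, and for part (ii) it is essentially the paper's own argument: the paper also conjugates by the orthogonal map $T$ that rotates the plane spanned by the two points and fixes its orthogonal complement, uses that the kernels depend only on $\omega\cdot\eta$ (so a change of variables moves the rotation onto $u_0$), invokes the same displacement bound $d(\eta,T\eta)\le d(\omega_1,\omega_2)$, and concludes via the normalization of the kernel and the fact that a pointwise supremum of uniformly Lipschitz functions is Lipschitz with the same constant. The only divergence is in part (i): the paper does not use the rotation trick there, but instead argues for the Poisson case from the uniform continuity of the harmonic extension $u$ on the closed ball $\overline{B_1}\subset\R^{d+1}$, and for the heat case from the uniform convergence of $u(\cdot,t)$ to the mean value of $u_0$ as $t\to\infty$ (giving uniform continuity on $\S^d\times[0,\infty)$). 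Your route for (i) — running the same rotation/change-of-variables estimate with the modulus of continuity of $u_0$ in place of the Lipschitz constant, and passing the uniform-in-$s$ bound through the supremum — is equally valid and in fact mirrors the paper's own continuity lemmas on $\R^d$ and $\T^d$; it has the small advantage of treating both kernels and both parts by one mechanism, without needing boundary continuity of the Dirichlet extension or large-time asymptotics. The geometric inequality $d(R\eta,\eta)\le d(\omega,\omega')$ that you flag is exactly the (unproved) claim the paper also relies on, and your sketch via the planar/orthogonal decomposition is fine; just note the degenerate case $\omega'=-\omega$, where the two-plane is not unique and one simply picks any plane containing $\omega$.
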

\begin{proof}
(i) For the Poisson kernel this follows easily from the uniform continuity of $u$ defined in \eqref{u_case_sphere} in the unit ball $\overline{B_1} \subset \R^{d+1}$. For the heat kernel we use the fact that the function $u(\omega, t)$ defined in \eqref{sphere_heat_flow} converges uniformly to the average value $M = \frac{1}{\sigma_d} \int_{\S^d} u_0(\eta) \, \d\sigma(\eta)$ as $t \to \infty$, which implies that $u$ is uniformly continuous in $\S^d \times [0,\infty)$.

\smallskip

\noindent(ii) Let us consider the case of the Poisson kernel. The case of the heat kernel is analogous. Fix $0 < \rho < 1$ and consider two vectors $\omega_1$ and $\omega _2$ in $\S^d$. Let $E = {\rm span}\{\omega_1, \omega_2\}$ and F be the orthogonal complement of $E$ in $\R^{d+1}$. Let $T$ be an orthogonal transformation in $\R^{d+1}$ such $T|_E$ is a rotation with  $T\omega_1 = \omega_2$ and $T|_F = I$. It follows that for any $\eta \in \S^d$ we have $d(\eta, T\eta) \leq d(\omega_1, \omega_2)$. Using the fact that the Poisson kernel $\mc{P}(\omega,\eta, \rho)$ depends only on the inner product $\omega \cdot \eta$ (the same holds for the heat kernel) we have
\begin{align*}
|u(\omega_1,\rho) - u(\omega_2,\rho)| & = \left|\int_{\S^{d}}\mc{P}(\omega_1,\eta, \rho)\,u_0(\eta)\,\d\sigma(\eta) - \int_{\S^{d}}\mc{P}(\omega_2,\eta, \rho)\,u_0(\eta)\,\d\sigma(\eta)\right|\\
& = \left|\int_{\S^{d}}\mc{P}(\omega_1,\eta, \rho)\,u_0(\eta)\,\d\sigma(\eta) - \int_{\S^{d}}\mc{P}(T^{-1}\omega_2,\eta, \rho)\,u_0(T\eta)\,\d\sigma(\eta)\right|\\
& \leq \int_{\S^{d}}\mc{P}(\omega_1,\eta, \rho)\,\big|u_0(\eta) - u_0(T\eta)\big|\,\d\sigma(\eta) \\
& \leq \int_{\S^{d}}\mc{P}(\omega_1,\eta, \rho)\, {\rm Lip}(u_0) \, d(\eta, T\eta)\,\d\sigma(\eta)\\
& \leq {\rm Lip}(u_0)\, d(\omega_1, \omega_2).
\end{align*}
Hence ${\rm Lip}(u(\cdot, \rho)) \leq {\rm Lip}(u_0)$ and the pointwise supremum of Lipschitz functions with constants at most ${\rm Lip}(u_0)$ is also a Lipschitz function with constant at most ${\rm Lip}(u_0)$. 
\end{proof}

\begin{lemma}[Subharmonicity - spherical version] Let $u^*$ be the maximal function defined in \eqref{maxpois} or \eqref{maxheats}. If $u_0 \in C(\S^d)$ then $u^*$ is subharmonic in the open set $A = \{x \in \S^d; \,u^*(\omega) > u_0(\omega)\}$. 
\end{lemma}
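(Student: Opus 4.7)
I plan to follow closely the strategy used in the proof of Lemma \ref{lem_subharmonicity}, treating the two cases in parallel. First, by Lemma \ref{lem_continuity_pois}, $u^*$ is continuous on $\S^d$, so $A$ is relatively open. Since $u_0 \in C(\S^d)$ attains its maximum on the compact sphere and $u^*$ agrees with $u_0$ at such a point, $A \neq \S^d$. Fix $\omega_0 \in A$ and a closed geodesic ball $\overline{B_r(\omega_0)} \subset A$ (with $r$ less than the injectivity radius). Let $h:\overline{B_r(\omega_0)} \to \R$ be the solution of the Dirichlet problem
$$\Delta_{\S} h = 0 \ \ \text{in } B_r(\omega_0), \qquad h = u^* \ \ \text{on } \partial B_r(\omega_0),$$
where $\Delta_\S$ is the Laplace--Beltrami operator on $\S^d$. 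By rotational symmetry of the Dirichlet problem around $\omega_0$, a decomposition of the boundary data into spherical harmonics (all of which except the constant mode integrate to zero on $\partial B_r(\omega_0)$ and must vanish at the pole in any bounded harmonic extension) yields the mean value identity $h(\omega_0) = \frac{1}{\sigma(\partial B_r(\omega_0))} \int_{\partial B_r(\omega_0)} u^* \, \d\sigma$. So it suffices to show $u^*(\omega_0) \leq h(\omega_0)$.

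For the heat kernel case, I would set $v(\omega,t) = u(\omega,t) - h(\omega)$ on $\overline{B_r(\omega_0)} \times [0,\infty)$. Since $\Delta_\S h = 0$ and $u$ solves the heat equation, $\partial_t v - \Delta_\S v = 0$ in the interior, with $v \leq 0$ on the lateral boundary. I claim that $M := \max_{\overline{B_r(\omega_0)}} v(\cdot,0) \leq 0$: if instead $M > 0$ were attained at some $\omega_0' \in B_r(\omega_0)$, the parabolic maximum principle applied on $\overline{B_r(\omega_0)} \times [0,T]$ for any $T > 0$ would yield $v(\omega_0', t) \leq M$ for all $t \geq 0$, hence $u(\omega_0', t) \leq h(\omega_0') + M = u_0(\omega_0')$, contradicting $\omega_0' \in A$. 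Once $v(\cdot, 0) \leq 0$ is established, the parabolic maximum principle gives $v \leq 0$ on the whole cylinder, and in particular $u^*(\omega_0) \leq h(\omega_0)$.

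For the Poisson case, the cleanest approach is to perform the change of variables $\rho = e^{-t}$, $t \in (0,\infty)$. Writing the Euclidean Laplacian in spherical coordinates, $\Delta = \partial_\rho^2 + \frac{d}{\rho} \partial_\rho + \frac{1}{\rho^2} \Delta_\S$, one readily checks that the harmonic function $u(\rho\omega)$ becomes, in the new variable, a solution of the elliptic equation
$$\partial_{tt} u - (d-1)\partial_t u + \Delta_\S u = 0 \quad \text{on } \S^d \times (0,\infty),$$
with $u(\omega,0) = u_0(\omega)$, and with $u(\omega,t) \to \overline{u_0} := \frac{1}{\sigma_d}\int_{\S^d} u_0\, \d\sigma$ uniformly in $\omega$ as $t\to\infty$ (this follows from the smoothness of the Poisson integral at the origin and the analog of Lemma \ref{lem_boundedness_large_times}(ii)). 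This puts us in the elliptic framework of Lemma \ref{lem_subharmonicity}. I would then mimic that argument verbatim: define $v = u - h$, and given $\varepsilon > 0$, use the uniform closeness of $u(\cdot,t)$ to a constant for large $t$ to show $v(\omega,t_1) \leq \varepsilon$ on $\overline{B_r(\omega_0)}$; then apply the elliptic maximum principle on $\overline{B_r(\omega_0)} \times [0,t_1]$ to derive both the claim $v(\cdot,0) \leq 0$ (by contradiction as above) and the conclusion $u^*(\omega_0) \leq h(\omega_0) + \varepsilon$, letting $\varepsilon \to 0$.

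The main obstacles I expect are twofold: rigorously justifying the spherical mean value property for $h$ at the center of a geodesic ball (this relies on the rotational symmetry argument sketched above, and ultimately on the fact that nonconstant zonal harmonics vanish at the pole), and carefully establishing the uniform convergence $u(\cdot,t) \to \overline{u_0}$ in the transformed Poisson setting, which is needed to run the elliptic maximum principle argument on cylinders of arbitrarily large height. Both are technical but not deep, and follow the template of the Euclidean proof.
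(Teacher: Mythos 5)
Your proof is correct, and the heat-flow case is essentially the paper's argument (parabolic maximum principle on $\overline{B_r(\omega_0)}\times[0,T]$, with no large-time input needed since the top of the cylinder is not part of the parabolic boundary). For the Poisson case you take a mildly different route: you substitute $\rho=e^{-t}$ and work with the resulting elliptic equation $w_{tt}-(d-1)w_t+\Delta_{\S}w=0$ on $\S^d\times(0,\infty)$, then rerun the proof of Lemma \ref{lem_subharmonicity}, using the uniform convergence of $u(\cdot,\rho)$ to the constant $\overline{u_0}=u(0)$ as $\rho\to0$ to control the top of the cylinder. The paper instead stays in the Euclidean ball: since $\Delta_{\S}h=0$, the $\rho$-independent extension of $h$ is harmonic for the Euclidean Laplacian, so $v(\rho\omega)=u(\rho\omega)-h(\omega)$ is harmonic in the solid cone $U=\{\rho\omega:\ \omega\in B_r(\omega_0),\ 0<\rho<1\}$, and the ordinary maximum principle is applied on the truncated cone $U_\varepsilon$, with continuity of $u$ at the origin handling the inner cap $\rho=\varepsilon$. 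The two arguments are the same in substance---your cutoff at large $t_1$ is exactly the paper's truncation at $\rho=\varepsilon$, and $u(0)=\overline{u_0}$ plays the same role---but the paper's version only invokes the classical Euclidean maximum principle, whereas yours requires (the standard but unstated) weak maximum principle for a second-order elliptic operator on a manifold cylinder; in exchange, yours reduces literally to the already-proved scheme of Theorem \ref{Thm1} with $a=1$, $b=d-1$. One genuine plus of your write-up: you explicitly justify the mean value identity $h(\omega_0)=\frac{1}{\sigma(\partial B_r(\omega_0))}\int_{\partial B_r(\omega_0)}u^*\,\d\sigma$ for the Laplace--Beltrami harmonic $h$ at the center of a geodesic ball (via the zonal decomposition, the regular nonconstant modes vanishing at the pole), a step the paper uses implicitly when passing from $u^*\le h$ in $B_r(\omega_0)$ to the sub-mean-value inequality defining subharmonicity.
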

\begin{proof} First we deal with the maximal function associated to the Poisson kernel in \eqref{maxpois}. By Lemma \ref{lem_continuity_pois}  we know that $u^*$ is continuous and the set $A$ is indeed open. Take $\omega_0\in A$ and consider a radius $r>0$ such that the closed geodesic ball
$\ov{B_r(\omega_0)}$ is contained in $A$. Let $h:\ov{B_r(\omega_0)}\rightarrow \R$ be the solution of the Dirichlet problem
$$ \left\{\begin{array}{rl}
 \Delta h = 0~& {\rm in }  \ \, B_r(\omega_0); \\
\ h = u^*~ & {\rm in } \   \, \partial B_r(\omega_0),
\end{array}\right.$$
where $\Delta=\Delta_{\S^{d}}$ is the Laplace-Beltrami operator with respect to the usual metric in $\S^{d}$. Since $u^*$ is continuous, the unique solution $h$ belongs to $C^2(B_r(\omega_0))\cap C(\ov{B_r(\omega_0)})$. We now define the function 
$$v(\omega,\rho)=u(\omega,\rho)-h(\omega),$$  
which is harmonic (now with respect to the Euclidean Laplacian) in the open set
$U=\{\rho\omega \in \R^{d}; \,\omega \in B_r(\omega_0),~0 < \rho < 1\}$. We claim that $v \leq 0$ in $U$. Assume that this is not the case and let
\begin{equation}\label{Em_add_0}
M = \sup_U v(\omega,\rho) > 0.
\end{equation}
Let $\omega_1 \in \partial B_r(\omega_0)$ (by the maximum principle) be such that 
\begin{equation}\label{Em_add_1}
\min_{\omega \in \ov{B_r(\omega_0)}} h(\omega) = h(\omega_1).
\end{equation}
Since $u$ is continuous in the unit Euclidean ball, let $\varepsilon>0$ be such that (recall that we identify $u(\omega,\rho) = u(\rho \omega)$) 
\begin{equation}\label{Em_add_2}
|u(\omega,\rho) - u(0)| \leq \frac{M}{2}
\end{equation}
for $0 \leq \rho \leq \varepsilon$. Therefore, for $0 < \rho \leq \varepsilon$, by \eqref{Em_add_1} and \eqref{Em_add_2} we have
\begin{align}\label{Em_add_3}
v(\omega,\rho)=u(\omega,\rho)-h(\omega) \leq \left(u(0) + \frac{M}{2}\right) - h(\omega_1) \leq \left(u^*(\omega_1) + \frac{M}{2}\right) - h(\omega_1) = \frac{M}{2}.
\end{align}
Let $U_{\varepsilon} = \{\rho\omega \in \R^{d}; \,\omega \in B_r(\omega_0),~\varepsilon < \rho < 1\}$. Note that $v$ is continuous up to the boundary of $U_{\varepsilon}$ and by \eqref{Em_add_0} and \eqref{Em_add_3} we have 
\begin{equation*}
M = \max_{\ov{U_\varepsilon}} v(\omega,\rho).
\end{equation*}
By the maximum principle, this maximum is attained at the boundary of $U_\varepsilon$. From \eqref{Em_add_3} we may rule out the set where $\rho = \varepsilon$. Since $h=u^*$ in $\partial B_r(\omega_0)$, we have $v \leq 0$ in the set $\{\rho\omega \in \R^{d}; \,\omega \in \partial B_r(\omega_0),~\varepsilon \leq \rho \leq 1\}$. Hence the maximum $M$ must be attained at a point $\eta \in B_r(\omega_0)$ (and $\rho =1$). It follows that 
\begin{equation*}
u(\eta, \rho)-h(\eta)\leq u_0(\eta)-h(\eta)
\end{equation*}
for every $0< \rho < 1$, which implies that $u^*(\eta)=u_0(\eta)$, a contradiction. This establishes our claim. 

\smallskip

It then follows that $u(\omega, \rho) \leq h(\omega)$ for any $\omega \in B_r(\omega_0)$ and $0< \rho <1$, and this yields $u^*\leq h$ in $B_r(\omega_0)$. Since this is true for any $\omega_0\in A$ and any $r>0$ such that 
$B_r(\omega_0)\subset A$, we conclude that $u^*$ is subharmonic in $A$.

\smallskip

The proof for the maximal operator associated to the heat kernel \eqref{maxheats} follows along the same lines (see the proof of \cite[Lemma 8]{CS}), using the maximum principle for the heat equation.
\end{proof}

\begin{lemma} \label{integ_partes_sphere}
Let $f, g \in C(\S^d) \cap W^{1,2}(\S^d)$ be real-valued functions. Suppose that $g \geq 0$ and that $f$ is subharmonic in the open set $J = \{\omega\in \S^d; \ g(\omega) >0\}$. Then
\begin{equation*}
\int_{\S^d}\nabla f(\omega)\cdot\nabla g(\omega)~\d\sigma(\omega)\leq 0.
\end{equation*}
\end{lemma}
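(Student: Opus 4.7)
The plan is to follow the strategy of \cite[Lemma 9]{CS} and Lemma \ref{lem5_Int_by_parts_per}, replacing the Euclidean mollifier with the spherical heat kernel $\mathcal{K}$. At a formal level, the identity
\begin{equation*}
\int_{\S^d}\nabla f\cdot\nabla g\,\d\sigma = -\int_{\S^d}(\Delta f)\,g\,\d\sigma
\end{equation*}
combined with subharmonicity of $f$ on $J$ (so that $-\Delta f$ is a non-positive distribution there) and the fact that $g\geq 0$ vanishes on $\S^d\setminus J$ immediately yields the result. The task is to make sense of $\Delta f$, which in general is only a distribution (or Radon measure on $J$), and to perform the integration by parts rigorously.

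First I would smooth $f$ by the spherical heat semigroup: for $s>0$ set
\begin{equation*}
f_s(\omega)=\int_{\S^d}\mathcal{K}(\omega,\eta,s)\,f(\eta)\,\d\sigma(\eta).
\end{equation*}
By properties (P1)--(P3) of $\mathcal{K}$, we have $f_s\in C^\infty(\S^d)$, $f_s\to f$ uniformly on $\S^d$ and in $W^{1,2}(\S^d)$ as $s\to 0^+$. Because $\mathcal{K}(\cdot,\cdot,s)$ is a nonnegative radial kernel in the spherical sense, $f_s$ inherits subharmonicity from $f$ on any relatively open set $J_s\Subset J$ at positive geodesic distance from $\partial J$, for $s$ small enough: one can verify this either through the mean value inequality (averaging the mean value inequality for $f$ against $\mathcal{K}(\omega,\cdot,s)$) or, equivalently, by interpreting $\Delta f$ as a nonnegative Radon measure $\mu_f$ on $J$ (Riesz representation for subharmonic functions) and checking the distributional identity $\Delta f_s = \mathcal{K}_s \ast \mu_f \geq 0$ on $J_s$.

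Next I would truncate $g$ by defining $g_\epsilon=\max\{g-\epsilon,0\}$. Then $g_\epsilon\geq 0$, $g_\epsilon\in W^{1,2}(\S^d)$, $\supp g_\epsilon\subset\{g\geq\epsilon\}\Subset J$, and $g_\epsilon\to g$ in $W^{1,2}(\S^d)$ as $\epsilon\to 0^+$. Choosing $s=s(\epsilon)$ small enough so that $\supp g_\epsilon\subset J_s$, a legitimate integration by parts on the closed manifold $\S^d$ for the smooth function $f_s$ gives
\begin{equation*}
\int_{\S^d}\nabla f_s\cdot\nabla g_\epsilon\,\d\sigma = -\int_{\S^d}(\Delta f_s)\,g_\epsilon\,\d\sigma\leq 0,
\end{equation*}
because $\Delta f_s\geq 0$ on $\supp g_\epsilon$ and $g_\epsilon\geq 0$. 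Sending $s\to 0^+$ (using $f_s\to f$ in $W^{1,2}$) and then $\epsilon\to 0^+$ (using $g_\epsilon\to g$ in $W^{1,2}$) yields the claim.

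The main technical obstacle is the subharmonicity-preservation step: showing rigorously that $f_s$ is smooth and subharmonic on $J_s$. On $\R^d$ this is a classical fact about mollification with nonnegative radial kernels, and on $\S^d$ it reduces either to an application of the spherical mean value inequality to the radially decreasing kernel $\mathcal{K}(\omega,\cdot,s)$, or to the distributional computation $\Delta f_s=\mathcal{K}_s\ast\mu_f$ after extracting the positive Radon measure $\mu_f=\Delta f$ on $J$. Once this is in place, the rest of the argument is a routine double limit and the proof proceeds as in the Euclidean case treated in \cite{CS}.
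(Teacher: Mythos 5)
Your overall scheme --- mollify $f$, truncate $g$ to $(g-\epsilon)_+$, integrate by parts for the smooth approximation, then pass to the limit in $W^{1,2}(\S^d)$ --- is exactly the paper's scheme. The gap is in the one step you yourself flag as the main obstacle: the claimed subharmonicity of the heat-mollified $f_s$ on $J_s$. The spherical heat kernel $\mathcal{K}(\omega,\cdot,s)$ is strictly positive on \emph{all} of $\S^d$ for every $s>0$, so $f_s(\omega)$ depends on the values of $f$ far from $\omega$, in particular outside $J$, where no subharmonicity is assumed. Consequently neither of your two justifications is valid: (a) you cannot ``average the mean value inequality against $\mathcal{K}(\omega,\cdot,s)$'', since for $\eta$ outside $J$ (or too close to $\partial J$) there is no mean value inequality for $f$ centred at $\eta$ to average; (b) the identity $\Delta f_s=\mathcal{K}_s\ast\mu_f$ is false: distributionally $\Delta f_s(\omega)=\langle \Delta f,\,\mathcal{K}(\omega,\cdot,s)\rangle$, and $\Delta f$ coincides with the nonnegative Riesz measure $\mu_f$ only on $J$; on $\S^d\setminus J$ it is merely a $W^{-1,2}$ distribution with no sign, and the kernel does not vanish there. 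In fact the claim itself fails: on $\S^1$ take $f\equiv 0$ for $|\theta|\le 0.2$, $f(\theta)=-(|\theta|-0.2)$ for $0.2\le|\theta|\le 1.2$, $f\equiv -1$ beyond, and $J=(-0.1,0.1)$. Then $f$ is harmonic on $J$, while $\Delta f$ consists of negative point masses at $\pm 0.2$ and positive ones at $\pm1.2$, so $f_s''(0)=2\big(\mathcal{K}_s(1.2)-\mathcal{K}_s(0.2)\big)<0$ for every $s>0$ because $\mathcal{K}_s$ is radially decreasing. Hence $f_s$ is not subharmonic on $J_s$, and the inequality $-\int(\Delta f_s)\,g_\epsilon\,\d\sigma\le 0$ is unjustified as stated.

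The defect is only exponentially small off the diagonal, so your route can be repaired: for $\omega\in\supp g_\epsilon$ split $\mathcal{K}(\omega,\cdot,s)=\mathcal{K}\chi_{\rm near}+\mathcal{K}\chi_{\rm far}$ with a smooth cutoff at a scale $\rho$ smaller than the distance from $\supp g_\epsilon$ to $\partial J$; the near pairing equals $\int \mathcal{K}\chi_{\rm near}\,\d\mu_f\ge 0$, while the far pairing equals $-\int\nabla f\cdot\nabla\big(\mathcal{K}(\omega,\cdot,s)\chi_{\rm far}\big)\,\d\sigma$, which is $O\big(\|\nabla f\|_2\,\|\nabla(\mathcal{K}\chi_{\rm far})\|_2\big)\to 0$ as $s\to 0^+$ uniformly in $\omega$, giving $\Delta f_s\ge -\eta(s)$ on $\supp g_\epsilon$ and hence the conclusion in the limit. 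The paper avoids this issue altogether by choosing a mollifier with the compact-support property your kernel lacks: it sets $f_\varepsilon(\omega)=\int_{O(d+1)}f(R\omega)\,\psi_\varepsilon(R)\,\d\mu(R)$ with $\psi_\varepsilon\ge 0$ smooth, conjugation-invariant and supported in an $\varepsilon$-neighbourhood of the identity rotation, so $f_\varepsilon(\omega)$ uses only values of $f$ within geodesic distance $\varepsilon$ of $\omega$ and the mean value inequality yields exact subharmonicity of $f_\varepsilon$ on $J_\varepsilon=\{\omega\in J:\ d(\omega,\partial J)>\varepsilon\}$; the truncation $(g-\delta)_+$ and the double limit are then as in your proposal. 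Either fix works, but as written your key smoothing lemma is incorrect and the proof does not go through.
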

\begin{proof} 
If both functions were smooth, the result would follow from integration by parts on the sphere (see \cite[Chapter I, Proposition 1.8.7]{Yuan}), since
\begin{equation*}
\int_{\S^d}\nabla f\cdot\nabla g~\d\sigma(\omega)=\int_{\S^d}(-\Delta f)~ g~\d\sigma(\omega)\leq 0
\end{equation*}
and $-\Delta f \leq 0$ in the set where $g>0$. To prove the result we approximate $f$ and $g$ by smooth functions in a suitable way.

\smallskip

Let $O(d+1)$ be the group of rotations of $\R^{d+1}$ and let $\mu$ be its Haar probability measure. We consider a family $\psi_\varepsilon$ of nonnegative $C^\infty$-functions in $O(d+1)$ 
supported in an $\varepsilon$-neighborhood of the identity transformation with 
$$\int_{O(d+1)}\psi_\varepsilon( R)\, \d\mu(R)=1\,,$$
and we ask for each $\varepsilon$ that $\psi_\varepsilon(S^tRS)=\psi_\varepsilon(R)$ for every $S \in O(d+1)$, i.e., that $\psi_\varepsilon$ is invariant under conjugation. To construct such $\psi_\varepsilon$, it is enough to consider a smooth function of the trace in $O(d+1)$ which is concentrated in the set where the trace is in a neighborhood of $d+1$. 
We now define $f_\varepsilon$ by
\begin{equation}\label{heps}
f_\varepsilon(\omega)=\int_{O(d+1)}f(R\omega)\,\psi_\varepsilon( R)\, \d\mu( R).
\end{equation}
We now observe the following facts:

\smallskip

\noindent 1. The function $f_\varepsilon \in C^\infty(\S^d)$. To see this we argue as follows. Let $e_1$ be the first canonical vector of $\R^{d+1}$ and define $F_{\varepsilon}: O(d+1) \to \R$ by 
\begin{align*}
F_{\varepsilon}(T) & = \int_{O(d+1)}f(RTe_1)\,\psi_\varepsilon( R)\, \d\mu( R)\\
& = \int_{O(d+1)}f(Re_1)\,\psi_\varepsilon( RT^{-1})\, \d\mu( R).
\end{align*}
Since $\psi_\varepsilon( RT^{-1})$ is smooth as a function of $R$ and $T$, the function $F_{\varepsilon}$ is also smooth. Then the equality $F_{\varepsilon} = f_\varepsilon(Te_1)$ and the fact that $T \mapsto Te_1$ is a smooth submersion from $O(d+1)$ to $\S^d$ imply that $f_\varepsilon$ is also smooth.

\smallskip

\noindent 2. The family $f_\varepsilon$ approximates $f$ in $W^{1,2}(\S^d)$ as $\varepsilon \to 0$. This can be verified directly from \eqref{heps}.

\smallskip

\noindent 3. The function $f_\varepsilon$ is subharmonic in the set $J_\varepsilon:=\{\omega\in J; \,d(\omega,\partial J)>\varepsilon\}$. In fact, using the invariance of geodesic spheres under rotations and Fubini's theorem we find, for $\omega \in J_\varepsilon$,
\begin{align*}
f_\varepsilon(\omega)& =\int_{O(d+1)}f(R\omega)\,\psi_\varepsilon( R)\, \d\mu( R)\\
& \leq \int_{O(d+1)}\left(\frac{1}{\sigma(\partial B_r(R\omega))}\int_{\partial B_r(R\omega)} f(\eta)\,{\rm d}\sigma(\eta)\right)\psi_\varepsilon( R)\, \d\mu( R)\\
& = \int_{O(d+1)}\left(\frac{1}{\sigma(\partial B_r(\omega))}\int_{\partial B_r(\omega)} f(R\zeta)\,{\rm d}\sigma(\zeta)\right)\psi_\varepsilon( R)\, \d\mu( R)\\
& = \frac{1}{\sigma(\partial B_r(\omega))}\int_{\partial B_r(\omega)} f_{\varepsilon}(\zeta)\,{\rm d}\sigma(\zeta).
\end{align*}
Since $f_\varepsilon$ is smooth, this implies that $(-\Delta f_\varepsilon)\leq 0$ in $J_\varepsilon$.

\smallskip

\noindent 4. This is more a remark and will not be strictly necessary for our proof. The function $f_\varepsilon$ can be given as a convolution with a kernel that depends on the inner product of the entries. In fact, by the co-area formula one gets
\begin{align*}\label{smoothms}
\begin{split}
f_\varepsilon(\omega)& =\int_{O(d+1)}f(R\omega)\,\psi_\varepsilon( R)\, \d\mu(R) \\
                     & =\int_{\S^d} f(\eta)\int_{\{R\omega=\eta\}} \psi_\varepsilon( R)\,\llbracket F_\omega(R)\rrbracket^{-1} ~\d \H^{d(d-1)/2}(R)~\d\sigma(\eta)\\
                     &=\int_{\S^d}f(\eta)\,\Psi_\varepsilon(\omega,\eta)\,\d\sigma(\eta),
\end{split}
\end{align*}
where $\llbracket F_\omega(R) \rrbracket$ is the Jacobian of the submersion $F_\omega(R)=R\omega$ (this is just a constant) and $\H^{d(d-1)/2}$ is the $[d(d-1)/2]$-dimensional Hausdorff measure of $(O(d+1), \d\mu)$. From the invariance of $\psi_\varepsilon$ by conjugation, it follows that $\Psi_\varepsilon(\omega, \eta)$ depends only on the inner product $\omega \cdot \eta$. The advantage of defining $f_\varepsilon$ as in \eqref{heps} is that we easily get the subharmonicity in $J_\varepsilon=\{\omega\in J; \, d(\omega,\partial J)>\varepsilon\}$ as shown in (3) above. In contrast to $\R^{d}$, there is no canonical way to move geodesic spheres that works in the same way as translation does in the Euclidean space, hence our choice to average over the whole group of rotations to arrive at this specific convolution kernel.

\smallskip

We now conclude the proof. Since $g$ is continuous, for each $\varepsilon>0$ there is a $\delta=\delta(\varepsilon)$, which goes to $0$ as $\varepsilon$ goes to $0$, such that $g(\omega) \leq \delta $ for each $\omega \in J\setminus J_\varepsilon$. We then consider the function $g_\delta=(g-\delta)_+$, i.e. the function that is $g-\delta$ when $g \geq \delta$ and $0$ otherwise. Then $g_\delta \to g$ in $W^{1,2}(\S^d)$ as $\delta \to 0$ and it follows that
\begin{equation}\label{fin_1_2}
\int_{\S^d}\nabla f_\varepsilon\cdot\nabla g_\delta~\d\sigma(\omega)\rightarrow \int_{\S^d}\nabla f\cdot\nabla g~\d\sigma(\omega).
\end{equation}
By integration by parts we have
\begin{align}
\begin{split}\label{intjeps}
\int_{\S^d}\nabla f_\varepsilon\cdot\nabla g_\delta~\d\sigma(\omega) &= \int_{\S^d}(-\Delta f_\varepsilon)~ g_{\delta}~\d\sigma(\omega) \\
                                                  &=\int_{J_\varepsilon}(-\Delta f_\varepsilon)~ g_\delta~\d\sigma(\omega)+  \int_{J\setminus J_\varepsilon}(-\Delta f_\varepsilon)~ g_\delta~\d\sigma(\omega)\\
                                                  &\leq 0.
\end{split}
\end{align}
The result follows from \eqref{fin_1_2} and \eqref{intjeps}.
\end{proof}

\begin{lemma}[Reduction to the continuous case - spherical version]
In order to prove parts {\rm (i)}, {\rm (iii)} and {\rm (iv)} of Theorem \ref{Thm3} it suffices to assume that the initial datum $u_0: \S^d \to \R^+$ is continuous. 
\end{lemma}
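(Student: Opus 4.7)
The plan is to mirror the Euclidean reduction Lemma \ref{Red_Lip_case}, using the underlying smoothing kernel itself as the mollifier so as to exploit its semigroup property. Given $u_0 \in W^{1,p}(\S^d)$, or $u_0$ of bounded variation on $\S^1$, and $\varepsilon>0$ small, define
\[
u_\varepsilon(\omega) := \int_{\S^d}\mc{K}(\omega,\eta,\varepsilon)\,|u_0(\eta)|\,\d\sigma(\eta)
\]
in the heat-kernel case, and $u_\varepsilon := \mc{P}(\cdot,\cdot,1-\varepsilon)*|u_0|$ in the Poisson case. Then $u_\varepsilon \in C^\infty(\S^d)$, hence continuous. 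The heat semigroup identity $\mc{K}(\cdot,\cdot,\varepsilon)*\mc{K}(\cdot,\cdot,t) = \mc{K}(\cdot,\cdot,\varepsilon+t)$ and its Poisson analogue $\mc{P}(\cdot,\cdot,\rho_1)*\mc{P}(\cdot,\cdot,\rho_2) = \mc{P}(\cdot,\cdot,\rho_1\rho_2)$ (visible from the expansion in spherical harmonics) yield $u_\varepsilon^*(\omega) = \sup_{t>\varepsilon}u(\omega,t)$ and $u_\varepsilon^*(\omega) = \sup_{0 \leq \rho < 1-\varepsilon}u(\omega,\rho)$ respectively. In both situations $u_\varepsilon^* \nearrow u^*$ pointwise as $\varepsilon \to 0^+$.

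Assuming Theorem \ref{Thm3} for the continuous datum $u_\varepsilon$, one has $u_\varepsilon^* \in W^{1,p}(\S^d)$ with $\|\nabla u_\varepsilon^*\|_{L^p(\S^d)}\leq \|\nabla u_\varepsilon\|_{L^p(\S^d)}$. Since both kernels depend only on $\omega\cdot\eta$, convolution with them commutes with rotations and a Minkowski-type argument gives $\|\nabla u_\varepsilon\|_{L^p(\S^d)}\leq \|\nabla u_0\|_{L^p(\S^d)}$. Combined with the $L^p$-boundedness of the spherical maximal operator established in the introduction, the family $\{u_\varepsilon^*\}$ is uniformly bounded in $W^{1,p}(\S^d)$ for $1<p<\infty$. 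Weak compactness together with the pointwise convergence $u_\varepsilon^*\to u^*$ produces a subsequence with $u_\varepsilon^*\rightharpoonup u^*$ in $W^{1,p}(\S^d)$, and lower semicontinuity of the norm under weak limits yields $\|\nabla u^*\|_{L^p(\S^d)}\leq \|\nabla u_0\|_{L^p(\S^d)}$. The case $p=\infty$ appearing in (i) and (iv) requires no mollification, since any $W^{1,\infty}(\S^d)$ function admits a Lipschitz (hence continuous) representative. For part (iii), the mollification gives a continuous $u_\varepsilon$ with $V(u_\varepsilon)\leq V(u_0)$; assuming the bound for $u_\varepsilon$, one passes the pointwise limit $u_\varepsilon^*\to u^*$ through an arbitrary finite partition sum to conclude $V(u^*)\leq V(u_0)$.

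The main obstacle is more bookkeeping than conceptual: in the Poisson case the semigroup is multiplicative in $\rho$, so one must carefully identify the maximal function of $u_\varepsilon = \mc{P}(\cdot,\cdot,1-\varepsilon)*|u_0|$ with the supremum of the original Poisson evolution of $u_0$ over $\rho \in [0,1-\varepsilon)$, using that $\rho(1-\varepsilon)$ ranges through $[0,1-\varepsilon)$ as $\rho\in[0,1)$. Once this identification is in place and monotonicity in $\varepsilon$ is secured, the argument is essentially identical to the Euclidean reduction in Lemma \ref{Red_Lip_case}.
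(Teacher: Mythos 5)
Your proposal is correct and takes essentially the same route as the paper: mollify with the kernel itself (Poisson at radius $1-\varepsilon$, heat at time $\varepsilon$), use the multiplicative/additive semigroup structure to identify $u_\varepsilon^*$ as a restricted supremum converging pointwise to $u^*$, and conclude via the gradient contraction together with the weak-compactness and partition arguments of Lemma \ref{Red_Lip_case}. The one step you state loosely --- the ``Minkowski-type argument'' giving $\|\nabla u_\varepsilon\|_{L^p(\S^d)} \leq \|\nabla u_0\|_{L^p(\S^d)}$ --- is exactly what the paper carries out by differentiating along one-parameter rotation groups to obtain the pointwise bound \eqref{Eq_crucial_Kin_sphere}, followed by Jensen's inequality.
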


\begin{proof} We consider here the Poisson case and the heat flow case is analogous. For $0<r<1$ and $\omega\in\S^d$ let $u_r(\omega)=u(r\omega)$. It is clear that $u_r$ is a continuous function (in fact it is smooth) and that the solution of the Dirichlet problem \eqref{Diric}, with $u_r$ replacing $u_0$ as the boundary condition, is a suitable dilation of $u$ defined in \eqref{u_case_sphere}. Hence
$$u_r^*(\omega)=\sup_{0\leq\rho<r}u(\rho\omega),$$ 
which implies that $u_r^*\to u^*$ pointwise as $r\to1$.

\smallskip

For any $\omega,v\in\S^d$ such that $\omega\cdot v=0$, let $T$ be the linear transformation such that $T(\omega)=v$, $T(v)=-\omega$ and $T(\zeta)=0$ whenever $\zeta$ is orthogonal to $\omega$ and $v$. For $\lambda \in \R$ observe that $e^{\lambda T}$ is a rotation on $\R^{d+1}$ and hence
\begin{align*}
u_r(e^{\lambda T} \omega) & =  \int_{\S^{d}}\mc{P}(e^{\lambda T} \omega,\zeta, r)\,u_0(\zeta)\,\d\sigma(\zeta)\\
& = \int_{\S^{d}}\mc{P}(\omega,\eta, r)\,u_0(e^{\lambda T}\eta)\,\d\sigma(\eta).
\end{align*}
Differentiating both sides with respect to $\lambda$ and evaluating at $\lambda =0$ yields 
\begin{equation*}
\nabla u_r(\omega)\cdot v=\int_{\S^d}\mc{P}(\omega,\eta, r) \left(\nabla u_0(\eta)\cdot T(\eta)\right)\d\sigma(\eta).
\end{equation*}
We then observe that 
\begin{equation}\label{Eq_crucial_Kin_sphere}
|\nabla u_r(\omega)| \leq \int_{\S^d}\mc{P}(\omega,\eta, r) \,|\nabla u_0(\eta)|\, \d\sigma(\eta).
\end{equation}
It follows that 
\begin{equation}\label{Kin_sphere}
|\nabla u_r(\omega)| \leq |\nabla u_0|^*(\omega)
\end{equation}
and, by \eqref{Eq_crucial_Kin_sphere} and Jensen's inequality, we obtain
\begin{equation*}
\|\nabla u_r\|_{L^p(\S^d)} \leq \|\nabla u_0\|_{L^p(\S^d)}
\end{equation*}
for $1 \leq p \leq \infty$. The rest of the proof follows as in Lemma \ref{Red_Lip_case}.
\end{proof}

\subsection{Proof of Theorem \ref{Thm3}} Combining the lemmas of the previous subsection with Lemma \ref{Lem7_Renan}, the proof of Theorem \ref{Thm3} follows as in the proof of Theorem \ref{Thm1}.  We omit the details.

\section{Proof of Theorem \ref{Thm5}: Non-tangential maximal operators}

\subsection{Auxiliary lemmas} We keep the same strategy. The first step is still to note that the initial condition $u_0$ may be assumed to be nonnegative. In this section $u(x,t)=P(\cdot,t)*u_0(x)$ for $t>0$ and $u(x,0)=u_0(x)$. The function $u$ defined this way is harmonic in the open upper half-plane. We may restrict ourselves to the novel case $\alpha >0$.

\begin{lemma}[Continuity - non-tangential version] \label{lem_continuity_cone}
Let $\alpha >0$ and $u^*$ be the maximal function defined in \eqref{def_max_function_cone}.
\begin{itemize}
\item[(i)] If $u_0 \in C(\R) \cap L^p(\R)$, for some $1\leq p < \infty$, then $u^* \in C(\R)$. 
\smallskip
\item[(ii)] If $u_0$ is bounded and Lipschitz continuous then $u^*$ is bounded and Lipschitz continuous with ${\rm Lip}(u^*) \leq {\rm Lip}(u_0)$.
\end{itemize}
\end{lemma}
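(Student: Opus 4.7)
The plan is to follow the same pattern as the centered analogue, Lemma \ref{lem_continuity}, with extra care in handling the supremum over the non-tangential cone $\Gamma_\alpha(x) := \{(y,t) : |y-x| \leq \alpha t, \, t>0\}$ uniformly. As in Section \ref{Section2}, I first reduce to the case $u_0 \geq 0$.

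For part (ii), the argument is essentially immediate. For each fixed $t>0$, $u(\cdot,t) = P(\cdot,t)*u_0$ is bounded by $\|u_0\|_\infty$ and is Lipschitz with constant at most ${\rm Lip}(u_0)$, because $P(\cdot,t)$ is a probability density. A change of variable in the supremum defining $u^*(x+h)$ gives the identity $u^*(x+h) = \sup_{(y,t)\in \Gamma_\alpha(x)} u(y+h,t)$, and combining this with the Lipschitz estimate for each $u(\cdot,t)$ yields $u^*(x+h) \leq u^*(x) + {\rm Lip}(u_0)|h|$; the reverse inequality follows by symmetry.

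For part (i), fix $x \in \R$ and $\varepsilon>0$. I would first derive the sublinearity-type bound
$$|u^*(x+h) - u^*(x)| \leq \sup_{(y,t) \in \Gamma_\alpha(x)} P(\cdot,t)*|\tau_{-h}u_0 - u_0|(y),$$
again by translating the cone. The plan is then to split $\Gamma_\alpha(x)$ into the bounded-time piece $\{t \leq T\}$ and the tail $\{t > T\}$. For the tail, H\"older's inequality gives $P(\cdot,t)*|\tau_{-h}u_0 - u_0|(y) \leq 2\|u_0\|_p \|P(\cdot,t)\|_{p'}$; since the scaling $P(z,t)=t^{-1}P(z/t,1)$ ensures $\|P(\cdot,t)\|_{p'} \to 0$ as $t \to \infty$ (for $p=1$ this is the trivial bound $\|P(\cdot,t)\|_\infty = c/t$), $T$ can be chosen so that this contribution is less than $\varepsilon/2$, uniformly in $y$. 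For the bounded-time piece the point $y$ lies in the compact set $\{|y-x|\leq\alpha T\}$, and I would apply the same splitting as in Lemma \ref{lem_continuity}, breaking the convolution into $|y-z|<1$ and $|y-z|\geq 1$: the near part is controlled by the uniform continuity of $u_0$ on the compact set $\{|z-x|\leq\alpha T + 1 + |h|\}$, and the far part by $\|\tau_{-h}u_0-u_0\|_p \,\|\chi_{\{|\cdot|\geq 1\}}P(\cdot,t)\|_{p'}$.

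The main technical point is to verify that $\|\chi_{\{|\cdot|\geq 1\}}P(\cdot,t)\|_{p'}$ is uniformly bounded in $t>0$. This is a short direct calculation using the explicit form $P(z,t)=c_1\,t/(z^2+t^2)$ and the substitution $z=ts$: the resulting $L^{p'}$-tail is bounded by a constant depending only on $p$ and in fact tends to $0$ as $t\to 0^+$ and as $t\to\infty$. Once this uniform control is in place the rest of the estimate closes exactly as in Lemma \ref{lem_continuity}, with no additional obstacles specific to the non-tangential setting beyond keeping track of the geometric constraint $|y-x|\leq\alpha t$ when choosing the compact sets on which uniform continuity is applied.
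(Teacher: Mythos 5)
Your proposal is correct. Part (ii) coincides with the paper's argument: for each fixed $t$ (and each admissible shift $y$) the function $x \mapsto u(x+y,t)$ is bounded by $\|u_0\|_\infty$ and Lipschitz with constant at most ${\rm Lip}(u_0)$, and $u^*$ is a pointwise supremum of such functions over the translated cone. For part (i) you take a somewhat more hands-on route than the paper. The paper simply records two qualitative facts --- that $u$ is continuous up to the boundary $\R \times \{t=0\}$ when $u_0 \in C(\R)\cap L^p(\R)$, and that $\|u(\cdot,t)\|_\infty \le \|P(\cdot,t)\|_{p'}\|u_0\|_p \to 0$ uniformly as $t \to \infty$ --- and deduces continuity of $u^*$ from uniform continuity of $u$ on the compact, time-truncated part of the cone. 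You instead transfer everything to the datum: after the change of variables identifying $u^*(x+h)$ with a supremum over the cone at $x$, you bound $|u^*(x+h)-u^*(x)|$ by $\sup_{(y,t)\in\Gamma_\alpha(x)} P(\cdot,t)*|\tau_{-h}u_0-u_0|(y)$ and then run the near/far splitting of Lemma \ref{lem_continuity}, supplemented by the time truncation $t\le T$ (H\"older for the tail, exactly as in the paper) and the uniform-in-$t$ bound on $\|\chi_{\{|\cdot|\ge1\}}P(\cdot,t)\|_{p'}$, which your scaling computation correctly justifies. The trade-off is that your argument is longer but fully quantitative and self-contained, needing only $L^p$-continuity of translations and uniform continuity of $u_0$ on compact sets, whereas the paper's is shorter but leans on the standard boundary-continuity property of the Poisson extension; both are sound, and both hinge on the same essential point that the supremum can be restricted to a compact piece of the cone uniformly in the base point.
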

\begin{proof}
(i) From the hypothesis $u_0\in C(\mathbb R)\cap L^p(\mathbb R)$, we know that $u$ is continuous up to the boundary. By H\"{o}lder's inequality, $|u(x,t)|\le\|P(\cdot,t)\|_{p'}\|u_0\|_p$ and so $u(x,t)$ converges uniformly to zero as $t\to\infty$. These facts imply that $u^*\in C(\mathbb R)$.

\smallskip

\noindent (ii) For any $t>0$ and $y\in\mathbb R$, the function $x \mapsto u(x+y,t)$ is bounded by $\|u_0\|_{\infty}$ and is Lipschitz continuous with constant less than or equal to ${\rm Lip}(u_0)$. The claim follows since $u^*(x)$ is the supremum of these functions over all pairs $(t,y)$ such that $|y|\le \alpha t$.
\end{proof}

\begin{lemma}[Subharmonicity - non-tangential version] \label{Lem22} Let $\alpha>0$ and $u^*$ be the maximal function defined in \eqref{def_max_function_cone}. Let $u_0 \in C(\R) \cap L^p(\R)$ for some $1\leq p < \infty$ or $u_0$ be bounded and Lipschitz continuous. Then $u^*$ is subharmonic in the open set $A = \{x \in \R; \,u^*(x) > u_0(x)\}$.
\end{lemma}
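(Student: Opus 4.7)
The plan is to adapt the proof of Lemma~\ref{lem_subharmonicity} to the non-tangential setting. By Lemma~\ref{lem_continuity_cone}, $u^*$ is continuous, so the detachment set $A$ is open; in dimension $d=1$, subharmonicity on each component of $A$ is simply convexity. Fix $x_0 \in A$ and $r > 0$ with $[x_0 - r, x_0 + r] \subset A$, and let $h$ be the affine interpolation with $h(x_0 \pm r) = u^*(x_0 \pm r)$. The aim is to show $u^*(x_0) \leq h(x_0)$.

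I argue by contradiction: suppose $u^* - h$ attains a strictly positive maximum $M > 0$ at some interior point. One first establishes, in analogy with Lemma~\ref{lem_boundedness_large_times}, enough decay of $u(y,t)$ at infinity (using $\|P(\cdot,t)\|_{p'}\to 0$ when $u_0\in L^p$, or a separate argument in the bounded Lipschitz case) to guarantee that for any $c \in A$ the supremum defining $u^*(c)$ is actually attained at some $(y^*,t^*)$ with $t^* > 0$. Since $u$ is harmonic in the upper half-plane, the strong maximum principle rules out attainment at an interior point of the cone $\Gamma_\alpha(c)$: such an attainment would force $u$ to be constant, contradicting $c \in A$. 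Hence $y^* = c + \alpha t^*$ (Case~A) or $y^* = c - \alpha t^*$ (Case~B).

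The crucial geometric point, specific to $d = 1$, is that $(y^*, t^*)$ lies in $\Gamma_\alpha(x)$ precisely for $x$ in the interval $[y^* - \alpha t^*,\, y^* + \alpha t^*]$; consequently $u^*(x) \geq u(y^*, t^*) = u^*(c)$ on this interval. Specializing to the rightmost maximizer $c_+$ of $u^* - h$ and supposing Case~A holds there: for small $\epsilon > 0$, $u^*(c_+ + \epsilon) \geq h(c_+) + M$, whence $(u^* - h)(c_+ + \epsilon) \geq M - m\epsilon$, where $m$ is the slope of $h$; the strict rightmost-maximizer inequality $(u^* - h)(c_+ + \epsilon) < M$ then forces $m > 0$. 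Running the symmetric argument at the leftmost maximizer $c_-$ under Case~B forces $m < 0$. When these two configurations occur simultaneously we get an immediate contradiction, and when both cases occur at a single extremal maximizer we likewise obtain $m > 0$ and $m \leq 0$ together.

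The main obstacle is handling the remaining configurations, in which the case analysis at $c_-$ and $c_+$ yields compatible (or degenerate $m = 0$) constraints on $m$. In each such configuration I plan to exploit a thickening property of the level set $S = \{u^* - h = M\}$: under the favorable sign of $m$, the lower bound $u^*(x) \geq h(c) + M$ for $x \in [c, c + 2\alpha t^*(c)]$ combined with $(u^* - h)(x) \leq M$ forces the whole interval into $S$. Iterating this propagation from $c_-$ (or $c_+$), and observing that $t^*(c)$ cannot degenerate to zero for $c \in A$ (else $u_0(c) \geq h(c) + M > u_0(c)$), one concludes that $S$ must reach an endpoint of $[x_0 - r, x_0 + r]$, where $u^* - h = 0$, contradicting $M > 0$. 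This closing step is where the one-dimensional structure is used essentially, in accordance with the counterexample to subharmonicity for $d > 1$ mentioned after Theorem~\ref{Thm5}.
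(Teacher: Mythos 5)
Your proposal has two genuine gaps, and the first already breaks the argument in the case that actually matters. The entire scheme rests on the supremum defining $u^*(c)$ being \emph{attained} at some $(y^*,t^*)$ with $t^*>0$. This is fine for $u_0\in C(\R)\cap L^p(\R)$ (continuity up to the boundary plus uniform decay as $t\to\infty$), but it fails in general for bounded Lipschitz data, which is precisely the class you are reduced to for parts (i) and (iii) by the reduction lemma. Concretely, take $u_0(x)=\tfrac12+\tfrac1\pi\arctan x$; its Poisson extension is $u(y,t)=\tfrac12+\tfrac1\pi\arctan\bigl(\tfrac{y}{t+1}\bigr)$, and for every $x<\alpha$ one has $u^*(x)=\tfrac12+\tfrac1\pi\arctan\alpha>u_0(x)$, with the supremum over the cone approached only as $t\to\infty$ along the right edge and never attained. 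So ``a separate argument in the bounded Lipschitz case'' is not a technicality you can defer: there is no maximizing point to work with. The paper circumvents attainment entirely by a dichotomy: either the supremum of $u$ over the \emph{boundary} of the cone is strictly below $u^*(x_0)$, in which case ${\rm Lip}(u(\cdot,t))\le{\rm Lip}(u_0)$ lets one shrink the cone and translate it, giving $u^*(x_0\pm\varepsilon)\ge u^*(x_0)$ directly; or $u^*(x_0)=\max\{u_R^*(x_0),u_L^*(x_0)\}$, where $u_R^*,u_L^*$ are the suprema along the two edges, and one then works with these edge-suprema as majorants rather than with an attained maximum.

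The second gap is the endgame, which you yourself flag as the main obstacle; it does not close as described, and for a structural reason. Edge attainment only yields one-sided information ($u^*\ge u^*(c)$ on an interval on one side of $c$), and your thickening of the level set $S$ only operates under the favorable sign of the slope $m$ of $h$: right-edge attainment propagates rightward, which forces points into $S$ only when $m\le 0$, and left-edge attainment propagates leftward only when $m\ge 0$. Thus, for example, if $m>0$ and the supremum is attained only on the right edge at the leftmost and rightmost maximizers (and at whatever interior maximizers your iteration reaches), the propagation moves into the region where $h$ increases, forces nothing into $S$, never reaches an endpoint, and produces no sign contradiction; the argument simply stalls. What the paper proves instead is a genuinely \emph{two-sided} inequality at each $x_0\in A$: in the edge case it rotates the half-plane by $\arctan\alpha$ so the relevant edge becomes vertical, applies the mean value property of the harmonic function over small disks centered on that line, and majorizes the integrand by $u_R^*$ of the base points, obtaining $u^*(x_0)\le$ a symmetric weighted average of $u^*$ around $x_0$; hence there are arbitrarily small $\varepsilon$ with $u^*(x_0+\varepsilon)+u^*(x_0-\varepsilon)\ge 2u^*(x_0)$, and convexity follows by testing this at the \emph{smallest} maximizer of $u^*-h$. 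You would need a substitute for this two-sided mean-value step (or the Case~1/Case~2 dichotomy above) to complete your proof.
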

\begin{proof}
The set $A$ is in fact open due to Lemma \ref{lem_continuity_cone}. 

\smallskip

\noindent {\it Step 1}. We first prove the following claim: for any $x_0\in A$ there exist arbitrarily small positive values of $\varepsilon$ such that 
\begin{equation}\label{claim_1_Renan}
u^*(x_0+\varepsilon)+u^*(x_0-\varepsilon)\ge 2u^*(x_0).
\end{equation}

\smallskip

\noindent {\it Case 1}. Assume that $u_0$ is bounded and Lipschitz continuous and that 
\begin{equation}\label{Sub_Renan_0}
d=u^*(x_0)-\sup_{\substack{t >0 \\ |y-x_0| = \alpha t}} u(y,t) > 0.
\end{equation} 
Since ${\rm Lip}(u(\cdot,t))\le{\rm Lip}(u_0)$ for any positive $t$, we have 
\begin{equation}\label{Sub_Renan_1}
u^*(x_0)=\sup_{\substack{t>0 \\ |y-x_0|\le \alpha t-\frac{d}{2{\rm Lip}(u_0)}}} u(y,t).
\end{equation} 
For $0<\varepsilon<\frac{d}{2{\rm Lip}(u_0)}$ the region over which we take the supremum in \eqref{Sub_Renan_1} is contained in the region $|y-(x_0+\varepsilon)|\le \alpha t$ and so $u^*(x_0+\varepsilon)\ge u^*(x_0)$. Similarly $u^*(x_0-\varepsilon)\ge u^*(x_0)$, and this establishes \eqref{claim_1_Renan}.

\smallskip

\noindent {\it Case 2}. Let us define two operators: $u_R^*(x) = \sup_{t >0} u(x + \alpha t,t)$ and $u_L^*(x) = \sup_{t >0} u(x - \alpha t,t)$. If \eqref{Sub_Renan_0} does not happen then 
\begin{equation}\label{Sub_Renan_2}
u^*(x_0) = \max\{ u_R^*(x_0), u_L^*(x_0)\}.
\end{equation}
This is certainly the case when $u_0 \in C(\R) \cap L^p(\R)$, since the function $u(x,t)$ converges to zero uniformly as $t\to\infty$ and \eqref{Sub_Renan_2} follows by the maximum principle. Let us assume without loss of generality that $u^*(x_0) = u_R^*(x_0)$. 

\smallskip

Let $\theta=\arctan \alpha$ and let $T:\mathbb R^2\to\mathbb R^2$ be the counterclockwise rotation of angle $\theta$, given explicitly by $T(x,t)=(x\cos\theta-t\sin\theta,x\sin\theta+t\cos\theta)$. Letting $v=u\circ T^{-1}$, we get that $v$ is continuous on $\{(x,t)\in\mathbb R^2; \,\alpha x \leq t\}$, $v(x\cos\theta,x\sin\theta)=u_0(x)$ and $u_R^*(x)=\sup_{t>x\sin\theta}v(x\cos\theta,t)$ for any $x\in\mathbb R$. Since rotations preserve harmonicity, if $t>x_0\sin\theta$ and $r<(t-x_0\sin\theta)\cos\theta$ we have 
\begin{align}\label{Renan_eq_nice}
v(x_0\cos\theta,t)=\frac{1}{\pi r^2}\int_{B_r(x_0\cos\theta,t)} v(y,s)\,\d y\,\d s\le\frac{1}{\pi r^2}\int_{-r}^r2\sqrt{r^2-y^2}\,\,u_R^*\left(\frac{x_0\cos\theta+y}{\cos\theta}\right)\d y.
\end{align} 
Since we are assuming that $x_0 \in A$ and $u^*(x_0) = u_R^*(x_0) > u_0(x_0)$, by the continuity of $v$ there exists a $\delta = \delta(x_0)$ such that 
\begin{align*}
v(x_0\cos\theta,t)< u^*(x_0) - \tfrac12 (u^*(x_0) - u_0(x_0))
\end{align*}
for $x_0\sin\theta < t<x_0\sin\theta+\delta$. Hence the supremum in $u^*(x_0) = u_R^*(x_0)=\sup_{t>x_0\sin\theta}v(x_0\cos\theta,t)$ can be restricted to times $t \geq x_0\sin\theta+\delta$, and we can choose any $r< \delta \cos \theta$ in \eqref{Renan_eq_nice} to get
\begin{align*}
u^*(x_0)\le\frac{1}{\pi r^2}\int_{-r}^r2\sqrt{r^2-y^2}\,\,u^*\left(x_0+\frac{y}{\cos\theta}\right)\d y
\end{align*}
and this implies the existence of $\varepsilon<\frac{r}{\cos\theta}$ verifying \eqref{claim_1_Renan}.

\smallskip

\noindent {\it Step 2}. If $u^*$ were not subharmonic (i.e. convex in each connected component), we would be able to find an interval $[a,b]\subset A$ such that $u^*(a)+u^*(b)<2u^*(\frac{a+b}{2})$. Let $h(x)=\frac{x-a}{b-a}u^*(b)+\frac{b-x}{b-a}u^*(a)$. Then $u^*-h$  vanishes at the endpoints $a$ and $b$ but is positive at their arithmetic mean. Choose $x_0\in[a,b]$ as small as possible such that $(u^*-h)(x_0)=\sup_{x\in[a,b]}(u^*-h)(x)$. Then for all $\varepsilon$ sufficiently small, 
$$(u^*-h)(x_0+\varepsilon)+(u^*-h)(x_0-\varepsilon)<2(u^*-h)(x_0),$$ 
which contradicts \eqref{claim_1_Renan}. This completes the proof.
\end{proof}

\begin{lemma}[Reduction to the Lipschitz case - non-tangential version]
In order to prove parts {\rm (i)} and {\rm (iii)} of Theorem \ref{Thm5} it suffices to assume that the initial datum $u_0:\R \to \R^+$ is Lipschitz. 
\end{lemma}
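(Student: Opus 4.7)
The plan is to mirror the reduction carried out in Lemma \ref{Red_Lip_case}, using the Poisson kernel itself as the regularizer and exploiting the fact that the admissible cone $\{(t,y):t>0,\ |y-x|\leq \alpha t\}$ is stable under enlarging the time variable. For part (i) with $p=\infty$, any $u_0\in W^{1,\infty}(\R)$ can be modified on a null set to become bounded and Lipschitz, so no approximation is needed. For $1<p<\infty$ in part (i) and for part (iii), I would fix $\varepsilon>0$ and set $u_\varepsilon = P(\cdot,\varepsilon)*u_0$; by Young's inequality and the standard BV estimate for convolution with a probability measure, $u_\varepsilon$ is smooth, bounded and Lipschitz, with $\|u_\varepsilon\|_p \leq \|u_0\|_p$, $\|u_\varepsilon'\|_p \leq \|u_0'\|_p$ and $V(u_\varepsilon)\leq V(u_0)$.

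The key step is the semigroup identity
\[
u_\varepsilon^*(x) \;=\; \sup_{\substack{t>0 \\ |y-x|\le \alpha t}} P(\cdot,t+\varepsilon)*u_0(y),
\]
which follows from $P(\cdot,t)*P(\cdot,\varepsilon)=P(\cdot,t+\varepsilon)$. From this formula one obtains $u_\varepsilon^*(x)\leq u^*(x)$ at once, since for any admissible pair $(t,y)$ the shifted pair $(t+\varepsilon,y)$ still satisfies $|y-x|\leq\alpha t\leq\alpha(t+\varepsilon)$ and is therefore admissible for $u^*$. For the reverse inequality, any pair $(t_0,y_0)$ admissible for $u^*$ is also admissible in the supremum above, hence $u_\varepsilon^*(x)\geq P(\cdot,t_0+\varepsilon)*u_0(y_0)$; letting $\varepsilon\to 0^+$ and using continuity of the Poisson extension in $t$, this yields $\liminf_{\varepsilon\to 0}u_\varepsilon^*(x)\geq P(\cdot,t_0)*u_0(y_0)$, and taking the supremum over $(t_0,y_0)$ gives the pointwise convergence $u_\varepsilon^*\to u^*$ as $\varepsilon\to 0^+$.

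Once this pointwise convergence is in hand, the endgame repeats Lemma \ref{Red_Lip_case} almost verbatim. For part (i), assuming the theorem for the Lipschitz initial datum $u_\varepsilon$ gives $\|(u_\varepsilon^*)'\|_p\leq\|u_\varepsilon'\|_p\leq\|u_0'\|_p$; combined with the $L^p$ boundedness of the non-tangential maximal operator recalled before the statement of Theorem \ref{Thm5}, this produces a uniform bound on $\{u_\varepsilon^*\}$ in $W^{1,p}(\R)$, after which weak compactness and the lower semicontinuity of the norm under weak limits produce $u^*\in W^{1,p}(\R)$ with $\|(u^*)'\|_p\leq\|u_0'\|_p$. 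For part (iii) one passes to the pointwise limit in the partition inequality $V_{\mathcal{P}}(u_\varepsilon^*)\leq V(u_\varepsilon^*)\leq V(u_\varepsilon)\leq V(u_0)$ and takes the supremum over finite partitions $\mathcal{P}$ of $\R$.

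The main obstacle I anticipate is precisely the verification of the displayed semigroup identity under the cone constraint. In the purely isotropic settings of the earlier sections the supremum is taken only over $t$, so the shift $t\mapsto t+\varepsilon$ poses no compatibility issue; here one must check that the cone $|y-x|\leq\alpha t$ is preserved under that shift, which is true by monotonicity in $t$ but is the one genuinely new ingredient that makes the whole reduction go through in the non-tangential regime.
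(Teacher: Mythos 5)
Your proposal is correct and follows essentially the same route as the paper: regularize with $u_\varepsilon = P(\cdot,\varepsilon)*u_0$, use the semigroup property to get $u_\varepsilon^*(x)=\sup_{t>0,\,|y-x|\le\alpha t}P(\cdot,t+\varepsilon)*u_0(y)$, deduce $u_\varepsilon^*\to u^*$ pointwise, and then repeat the weak-compactness/partition endgame of Lemma \ref{Red_Lip_case}. The paper merely asserts the pointwise convergence, whereas you spell out the two-sided argument (cone monotonicity in $t$ for $u_\varepsilon^*\le u^*$ and continuity in $t$ for the liminf bound), which is a welcome but not divergent addition.
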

\begin{proof}
It is the same as the proof of Lemma \ref{Red_Lip_case}, replacing identity \eqref{Red_pf_eq_sem_pro} with 
$$u_{\varepsilon}^*(x)=\sup_{\substack{t >0 \\ |y-x|\le \alpha t}}P(\cdot,t)*u_\varepsilon(y)=\sup_{\substack{t >0 \\ |y-x|\le \alpha t}} u(y,t+\varepsilon).$$ 
Note that $u_\varepsilon^*\to u^*$ pointwise as $\varepsilon\to0$.
\end{proof}

\subsection{Proof of Theorem \ref{Thm5}} Once we have established the lemmas of the previous subsection, together with Lemma \ref{Lem7_Renan}, the proof of Theorem \ref{Thm5} follows essentially as in the proof of Theorem \ref{Thm1}. We omit the details.

\subsection{A counterexample in higher dimensions} If $\alpha>0$ and $d >1$, the non-tangential maximal function \eqref{def_max_function_cone} in $\R^d$ is not necessarily subharmonic in the detachment set. We now present a counterexample.

\smallskip 

Recall the explicit form of the Poisson kernel $P(x,t)$ as defined in \eqref{Intro_Poisson}. Let $u_0: \R^d \to \R$ be given by
\begin{equation*}
u_0(x) = (1 + |x|^2)^{\frac{-d +1}{2}} = (d-1) \int_1^{\infty} \frac{s}{(s^2 + |x|^2)^{\frac{d+1}{2}}}\,\ds.
\end{equation*}
Writing $C_d = \Gamma \left(\frac{d+1}{2}\right)\pi^{-(d+1)/2}$ we get
\begin{align*}
u(x,t) & = \int_{\R^d}  P(x-y,t)\,u_0(y)\,\dy\\
& = \frac{(d-1)}{C_d} \int_{\R^d} \int_{1}^{\infty} P(x-y,t)\,P(y,s)\,\ds\,\dy\\
& =  \frac{(d-1)}{C_d}  \int_{1}^{\infty} \int_{\R^d} P(x-y,t)\,P(y,s)\,\dy\,\ds\\
& = \frac{(d-1)}{C_d}  \int_{1}^{\infty} P(x, t+s)\,\ds\\
& = \big( (t+1)^2 + |x|^2\big)^{\frac{-d +1}{2}}.
\end{align*}
This is a translation of the fundamental solution of Laplace's equation on $\R^{d+1}$. A direct computation yields
\begin{align*}
u^*(x) = 
\left\{
\begin{array}{ll}
u_0(x) \ & \ {\rm if} \ |x| \leq \tfrac{1}{\alpha};\\
\left( \frac{(\alpha + |x|)^2}{\alpha^2 +1}\right)^{\frac{-d +1}{2}}& \ {\rm if} \ |x| > \tfrac{1}{\alpha}.
\end{array}
\right.
\end{align*}
From this we obtain 
\begin{equation*}
-\Delta u^*(x) = (d-1)\,\frac{(\alpha^2 +1)^{\frac{d-1}{2}}}{(\alpha + |x|)^{d+1}}  \left( \frac{\alpha}{|x|}(d-1) - 1\right)
\end{equation*}
for $|x| > \tfrac{1}{\alpha}$. This is strictly positive (hence $u^*$ is superharmonic) for $\tfrac{1}{\alpha} < |x| < (d-1) \alpha$ (assuming that this interval is nonempty, i.e. that $(d-1) \alpha^2 >1$).

\section*{Acknowledgements}
E.C. acknowledges support from CNPq-Brazil grants $305612/2014-0$ and $477218/2013-0$, and FAPERJ grant $E-26/103.010/2012$.

\end{document}